\newtheorem{theo}{Theorem}[section]
\newtheorem{defin}[theo]{Definition}
\newtheorem{prop}[theo]{Proposition}
\newtheorem{coro}[theo]{Corollary}
\newtheorem{lemm}[theo]{Lemma}
\newtheorem{rem}[theo]{Remark}
\newcommand{\al}{\alpha}
\newcommand{\be}{\beta}
\newcommand{\Ga}{\Gamma}
\newcommand{\om}{\omega}
\newcommand{\Om}{\Omega}
\newcommand{\ep}{\epsilon }
\newcommand{\te}{\theta}
\newcommand{\De}{\Delta}
\newcommand{\de}{\delta}
\newcommand{\pa}{\partial}
\newcommand{\R}{{\mathbb R}^n}
\newcommand{\hR}{{\mathbb R}^n_+}
\newcommand{\ri}{\rightarrow}
\newcommand{\Rn}{{\mathbb R}^{n-1}}
\newcommand{\na}{\nabla}
\newcommand{\calC}{{ \mathcal C }}
\newcommand{\bke}[1]{\left( #1 \right)}
\newcommand{\bket}[1]{\left\{ #1 \right\}}
\newcommand{\norm}[1]{\left\Vert #1 \right\Vert}
\newcommand{\abs}[1]{\left| #1 \right|}
\newcommand{\rabs}[1]{\left. #1 \right|}
\newcommand{\calS}{{ \mathcal S  }}
\newcommand{\calT}{{ \mathcal T  }}
\newcommand{\calL}{{ \mathcal L  }}
\newcommand{\calB}{{ \mathcal B  }}
\newcommand{\calN}{{ \mathcal N  }}
\begin{document}
\baselineskip=15pt

\title[Local regularity near boundary]{Local regularity near boundary for the Stokes and Navier-Stokes equations}

\maketitle

\begin{center}
  \normalsize
%  \authorfootnotes
  Tongkeun Chang \footnote{Department of Mathematics,
   Yonsei University Seoul, 03722,
    South Korea, \,\, chang7357@yonsei.ac.kr } and Kyungkeun Kang \footnote{Department of Mathematics,
   Yonsei University Seoul, 03722,
    South Korea, \,\, kkang@yonsei.ac.kr}%\textsuperscript{2}
 % \par
 % \vspace{3mm}%\footnote{Author A}\textsuperscript{1},
  %\textsuperscript{1}
 % {Department of Mathematics,\\
  % Yonsei University Seoul, 136-701,\\
  %  South Korea,}

\end{center}

\begin{abstract}
We are concerned with local regularity of the solutions for the Stokes and Navier-Stokes equations near boundary. Firstly, we construct a bounded solution but its normal derivatives are singular in any $L^p$ with $1<p$ locally near boundary. On the other hand, we present
criteria of solutions of the Stokes equations near boundary to imply that the gradients of solutions are bounded (in fact, even further H\"{o}lder continuous).
Finally, we provide examples of solutions whose local regularity near boundary is optimal.
% in comparison to previously known results.

\noindent 2020  {\em Mathematics Subject Classification.}  primary
35Q30,
secondary 35B65.

\noindent {\it Keywords and phrases: Stokes equations, Navier-Stokes equations,  local regularity near boundary}

\end{abstract}

%\maketitle

\section{Introduction}
\setcounter{equation}{0}

We consider  the non-stationary Stokes equations near flat boundary
\begin{equation}\label{Stokes-10}
u_t - \De u +  \na \pi =0\qquad \mbox{div } u =0 \quad \mbox{ in }
\,B^+_{2}\times (0, 4),
\end{equation}
where  $B^+_{r}:=\{ x=(x', x_n )\in \Rn \times {\mathbb R}: |x|<r, x_n >0\}$. Here, no-slip boundary condition is given only on the flat boundary, i.e.
\begin{equation}\label{bddata-20}
u=0 \quad \mbox{ on } \,\Sigma:=(B_{2}\cap\{x_n=0\})\times (0,4).
\end{equation}

We can also consider similar situation for the Navier-Stokes equations, i.e.
\begin{equation}\label{nse-30}
u_t - \De u +  (u\cdot\nabla) u+\nabla  \pi =0,\qquad \mbox{div } u =0 \quad \mbox{ in }
\,B^+_{2}\times (0, 4)
\end{equation}
with the boundary condition \eqref{bddata-20}.

Our concern is local analysis of the solutions of the equations \eqref{Stokes-10} or \eqref{nse-30} with \eqref{bddata-20} in $B^+_{1}\times (0, 1)$. Unlike the heat equation, non-local
effect of the Stokes equations may cause limitation of local smoothing effects of solutions.
In fact, the second author showed that there exists a weak solution of the Stokes equations \eqref{Stokes-10}-\eqref{bddata-20} whose normal derivative is unbounded near boundary, although
it is bounded and its derivatives are square integrable (see \cite{Kang05}), i.e.
\[
\sup_{Q_{\frac{1}{2}}^+}\abs{D_{x_n} u}=\infty, \qquad \sup_{Q_1^+}\abs{u}+\norm{\nabla u}_{L^2(Q_1^+)}<\infty,
\]
where $Q^+_r : = B_r^+ \times (1-r^2,1), \,\, 0 < r < 1$.

Seregin and \u{S}ver\'ak found a simplified example as the form of shear flow in a half-space to the Stokes equations (and Navier-Stokes equations as well) such that its gradient is unbounded near boundary, although velocity field is locally bounded  (see \cite{Seregin-Sverak10}). It is noteworthy that the example in \cite{Seregin-Sverak10} is not of globally finite energy, and on the other hand, the constructed one in \cite{Kang05} has finite energy globally in a half-space (see \cite{KLLT21} for the details).

Following similar constructions as in \cite{Kang05},
even further, the authors  constructed an example that $u$ is integrable in $L^4_tL^p_x$ but $\nabla u$ is not square integrable (see \cite{CKca}). More precisely, there exists a very weak solution of the Stokes equations \eqref{Stokes-10} or the Navier-Stokes equations \eqref{nse-30} with \eqref{bddata-20} such that
\[
\norm{\nabla u}_{L^2(Q_1^+)}=\infty,\qquad \norm{u}_{ L^{4}_{t} L^p_{x}(Q_1^+)}<\infty,\quad p<\infty.
\]
The notions of very weak solutions are given in Section 2 (see Definition \ref{stokesdefinition} and Definition \ref{NS-vweak}).
With the aid of this construction, the authors also proved that Caccioppoli's inequalities of Stokes equations and Navier-Stokes equations in general may fails near boundary
when only local boundary problems are considered (see \cite[Theorem 1.1]{CKca}). It is a very important distinction in comparison to the interior case, where Caccioppoli's (type) inequalities turn out to be true (compare to \cite{J} and \cite{Wolf15}, and refer also \cite{Jin-Kang17} for generalized Navier-Stokes flow).

One may ask how bad $\nabla u$ could be, when $u$ is bounded in a local domain near boundary.
One of our motivations in this paper is to answer to the question, and we obtain the following:
\begin{theo}\label{maintheorem-SS}
Let $1 < p < \infty$. Then, there exists a very weak solution  $u$ of  Stokes equations \eqref{Stokes-10}  or Navier-Stokes equations \eqref{nse-30} with the boundary condition \eqref{bddata-20} such that
\begin{equation}\label{blowup-40}
\| u \|_{L^\infty(Q_1^+)}<\infty, \qquad \|\na  u \|_{L^{p}(Q^+_\frac12 )} =\infty.
\end{equation}
\end{theo}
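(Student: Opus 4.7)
The strategy is to build the solution $u$ explicitly on the entire half-space and then restrict, following the scheme of \cite{Kang05} and \cite{CKca}. I would consider the Stokes system on $\mathbb{R}^n_+ \times \mathbb{R}$ with tangential boundary data $\phi(y', s)$ on $\partial \mathbb{R}^n_+ \times \mathbb{R}$ that vanishes on $\overline{\Sigma}$. Then \eqref{bddata-20} holds automatically on $\Sigma$, and the non-local character of the Stokes pressure will drive a singularity of $\nabla u$ near $\{x_n=0\}$ while leaving $u$ itself bounded. This decoupling of regularity between $u$ and $\nabla u$ is precisely the phenomenon previously exhibited at the $L^2$-level; one now seeks a quantitative sharpening so that $\nabla u$ fails to be in $L^p$ for arbitrary $p>1$.

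The first step is to represent $u$ through the half-space Stokes Green tensor $K_{ij}$,
$$
u_i(x,t) \;=\; \int_{-\infty}^{t} \int_{\mathbb{R}^{n-1}} K_{ij}(x, y', t-s)\, \phi_j(y', s)\, dy'\, ds,
$$
and to split $K$ into a heat-type, local piece (which is smoothing and respects the support of $\phi$) and a pressure-induced non-local piece (which decays only algebraically in $|x-(y',0)|$ and therefore carries contributions from the far field of $\phi$). Taking $\phi = \phi^{(p)}$ to be a one-parameter family with slow spatial decay at infinity, or equivalently supported in a shell at scale $R$ with amplitude arranged so that integration over $R$ produces a prescribed rate, direct asymptotic calculation of $\partial_{x_n} u$ on the axis $x'=0$ should yield a lower bound
$$
|\partial_{x_n} u(0, x_n, t)| \;\gtrsim\; x_n^{-\alpha}, \qquad 0 < x_n \ll 1,\ t \in (t_0, t_0 + \delta),
$$
with $\alpha = \alpha(p) \geq n/p$. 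This forces $\|\nabla u\|_{L^p(Q_{1/2}^+)} = \infty$, since $\int_0^{1/2} x_n^{-\alpha p + n - 1}\, dx_n = \infty$. Simultaneously, the smoothing estimates on the local piece of $K$, combined with the fact that $\phi$ vanishes on $\overline{\Sigma}$, keep $\|u\|_{L^\infty(Q_1^+)}$ finite.

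The main obstacle is the sharp quantitative analysis of the Green tensor $K_{ij}$ and the precise choice of the boundary profile for each given $p$: one must identify the leading-order term of the pressure kernel responsible for the singular normal derivative, verify that lower-order terms do not cancel it, and check that the velocity itself does not inherit the singularity. This is a technical refinement of the kernel analyses carried out in \cite{Kang05, CKca} and will occupy the bulk of the proof, with the exponent $\alpha(p)$ tuned by the scaling of $\phi^{(p)}$.

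For the Navier-Stokes version, I would proceed perturbatively. Writing $u = v + w$, take $v$ as the Stokes solution just constructed, and solve the Stokes problem for $w$ with source $-\mathrm{div}(u \otimes u)$ and vanishing trace on $\Sigma$, via a fixed-point argument in a weighted $L^q$-class on $Q_1^+$ in which the convective term can be absorbed. Since $v$ is bounded and the singularity of $\nabla v$ is localized at the boundary in a controlled way, the iteration closes for small data and produces a very weak solution of \eqref{nse-30} in the sense of Definition \ref{NS-vweak} that inherits both properties in \eqref{blowup-40}. The delicate point is choosing compatible function-space scales so that the nonlinear iteration does not destroy the singular structure of $\nabla v$.
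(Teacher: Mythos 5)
Your overall architecture (solve the linear Stokes problem first, construct a rough boundary datum on the half-space that vanishes on $\overline\Sigma$ so that \eqref{bddata-20} holds locally, then perturb to Navier--Stokes by a fixed-point iteration with small amplitude) agrees with the paper, and your treatment of the nonlinear step is essentially the one used in Section~\ref{proof-ns}. The gap is in the mechanism by which $\nabla u$ is forced out of $L^p$.

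You propose to tune a spatial scaling of the boundary datum (a parameter $R$, shell support, slow decay) so as to produce a \emph{pointwise} lower bound $|\partial_{x_n} u(0,x_n,t)|\gtrsim x_n^{-\alpha}$ with $\alpha=\alpha(p)$, and you conclude $\nabla u\notin L^p$ from $\int_0^{1/2}x_n^{-\alpha p+n-1}\,dx_n=\infty$, i.e.\ $\alpha\ge n/p$. But if $u$ is bounded and $u|_{x_n=0}=0$, then integrating $\partial_{x_n}u$ from $0$ forces $\alpha<1$; combined with $\alpha\ge n/p$ this already restricts you to $p>n$. Even if you improve the geometry so that the bound holds on a fixed $x'$-slab and only $\alpha\ge 1/p$ is needed, the kernel asymptotics one actually gets (as in the Seregin--\v{S}ver\'ak shear flow discussed in Remark~1.2) confine the lower bound to the parabolic region $x_n^2\gtrsim |t|$; the $t$-integration then contributes an extra factor $x_n^2$ and the threshold reverts to $p>3$. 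This is exactly why the Seregin--\v{S}ver\'ak example only breaks local $L^p$-integrability for $p>3$, and why a pointwise power-law blow-up of $\partial_{x_n}u$ cannot, by itself, reach $p$ close to $1$ while keeping $u$ bounded. The paper bypasses this entirely: in Proposition~\ref{lemma0406} the spatial part $g_n^{\mathcal S}$ of the (normal, not tangential) boundary datum is a \emph{fixed} smooth bump supported away from the local region, and the singularity is created by choosing a \emph{temporally rough} factor $g_n^{\mathcal T}\in L^\infty(\mathbb R)\setminus\dot B^{\,1/2-1/(2p)}_{pp}(\mathbb R)$. The leading contribution to $\nabla w$ is a tensor product $\big(\int_0^t D_{x_n}\Gamma_1(x_n,t-s)\,g_n^{\mathcal T}(s)\,ds\big)\,\psi(x')$ of a one-dimensional heat potential in $(x_n,t)$ times a smooth profile, and the trace theorem (Proposition~\ref{prop2}(3)) gives $\|D_{x_n}^2\Gamma_1*g_n^{\mathcal T}\|_{L^p({\mathbb R}_+\times{\mathbb R})}\gtrsim \|g_n^{\mathcal T}\|_{\dot B^{\,1/2-1/(2p)}_{pp}}=\infty$ while all other pieces of $\nabla w$ remain in every $L^r$. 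No pointwise asymptotics are needed, and $p$ can be any number in $(1,\infty)$. That temporal-roughness/Besov-trace idea is the missing ingredient in your proposal.
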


\begin{rem}
If we compare to the example constructed by Seregin and \u{S}ver\'ak, their solutions also show
singular normal derivatives near boundary not only pointwisely but also $L^p_{\rm loc}$, $p>3$, since $\partial_{x_3}u(x_3, t)\ge cx_3^{-1+2\alpha}$, $\alpha\in (0,1/2)$ in the region near origin with $x_3^2\ge -4t$. Theorem \ref{maintheorem-SS} is an improvement of their result, since construction of singular normal derivatives in $L^p_{\rm loc}$ is extended up to all $p>1$ near boundary.
\end{rem}

\begin{rem}
\label{rem-thm11}
We do not know if $p$ in \eqref{blowup-40} can be replaced by $1$ in Theorem \ref{maintheorem-SS}, and thus we leave it as an open question.
In Appendix \ref{alter-thm11}, alternative proof is given for $p=2$ in Theorem \ref{maintheorem-SS}, which seems informative.
\end{rem}

On the other hand, the second motivation of the paper is to study optimal regularity
of the local problem near boundary, in case that the pressure $\pi$ is locally integrable in $L^p$. We recall that it was shown in \cite[Proposition 2]{Seregin00} in three dimensions that
for given $p, q\in (1,2]$ the solution of the  Stokes equations \eqref{Stokes-10}-\eqref{bddata-20} satisfies the following a priori estimate:
For any  $r$ with $p\le r<\infty$
\begin{equation}\label{Seregin-50}
\norm{u_t}_{L^{q}_t L^r_{x}(Q_{\frac{1}{4}}^+)}+\norm{\nabla^2 u}_{L^{q}_t L^r_{x}(Q_{\frac{1}{4}}^+)}+\norm{\nabla \pi}_{L^{q}_t L^r_{x}(Q_{\frac{1}{4}}^+)}
\le C\bke{\norm{ u}_{L^{q}_t W^{1,p}_{x}(Q_{1}^+)}+\norm{\pi}_{L^{q}_t L^p_{x}(Q_{1}^+)}}.
\end{equation}
Furthermore, due to parabolic embedding, it follows that
\begin{equation}\label{Seregin-100}
\norm{u}_{\calC^{\frac{\alpha}{2}}_t \calC^{\alpha}_{x}(Q_{\frac{1}{4}}^+)}
< \infty,\qquad  0< \alpha <2-\frac{2}{q}.
\end{equation}
We remark that there are examples such that the H\"{o}lder continuity \eqref{Seregin-100} is optimal. To be more precise, in case that $q<2$, it was proved in \cite{KLLT21} that
\begin{equation}\label{Seregin-200}
\norm{\nabla u}_{L^{\infty}(Q^+_\frac14 )}=\infty.
\end{equation}
\begin{rem}\label{rem0711-2}
For the case $q=2$, we can also show that \eqref{Seregin-200}
is true, and the details of its verification will be given in Appendix \ref{appendix0131-0}.
\end{rem}

The above estimate \eqref{Seregin-50} shows that integrability of $u_t$, $\nabla^2 u$ and $\nabla \pi$
is increased for spatial variables, and it is, however,  not clear if integrability in time could be improved or not.

Firstly, we extend the result of \cite[Proposition 2]{Seregin00} for the case $q>2$. In such case, an interesting feature is that not only velocity but also the gradient of velocity fields are H\"{o}lder continuous up to boundary, contrary to the case $1<q\le 2$.

\begin{theo}\label{Stokes-maximal}
Let $ 2< q < \infty $ and $1 < p < \infty$.
Suppose that   $(u,\pi)$ is solution for the Stokes equations  \eqref{Stokes-10}-\eqref{bddata-20} satisfying $\na ^2 u, \,\, u_t\in L^{q}_t L^p_x(Q_1^+)$ and $\pi\in L^q_{t}W^{1,p}_x(Q_1^+)$.
Then, for any $r$ with $p\le r<\infty$
\begin{equation}\label{SS-max-10}
\norm{u_t}_{L^{q}_t L^r_{x}(Q_{\frac{1}{4}}^+)}+\norm{\nabla^2 u}_{L^{q}_t L^r_{x}(Q_{\frac{1}{4}}^+)}+\norm{\nabla \pi}_{L^{q}_t L^r_{x}(Q_{\frac{1}{4}}^+)}
\le c\bke{\norm{ u}_{L^{q}_t W^{1,p}_{x}(Q_{1}^+)} + \norm{\pi}_{L^{q}_t L^p_{x}(Q_{1}^+)}}.
\end{equation}
Furthermore, the derivative of $u$ is H\"{o}lder continuous, i.e.
\begin{equation}\label{SS-max-20}
\norm{\nabla u}_{\calC^{\frac{\al}{2}}_t \calC^{\al}_{x}(Q_{\frac{1}{4}}^+)}
< \infty,\qquad  0 < \al < 1-\frac{2}{q}.
\end{equation}
\end{theo}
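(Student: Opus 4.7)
I would follow a localization-plus-bootstrap scheme in the spirit of Seregin's proof of \cite{Seregin00}, with the added observation that in the regime $q>2$ spatial Sobolev embedding improves the spatial integrability of $\na u$ and $\pi$ without costing any time integrability, so the cut-off argument can be iterated in the space variable alone. The key input is Solonnikov's half-space $L^q_tL^s_x$ maximal regularity for the non-stationary Stokes system on $\hR\times(0,\infty)$ with zero Dirichlet condition on $\pa\hR$.

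\textbf{Localization.} Fix a nested family of radii $\tfrac14=r_N<r_{N-1}<\cdots<r_0=1$ and cut-offs $\phi_k\in C_c^\infty$ with $\phi_k\equiv 1$ on $Q_{r_{k+1}}^+$ and $\mathrm{supp}\,\phi_k\subset Q_{r_k}^+$. Set $v_k=u\phi_k-\calB_k(u\cdot\na\phi_k)$, where $\calB_k$ is a Bogovskii-type operator producing a solenoidal correction supported in the annular cylinder $Q_{r_k}^+\setminus Q_{r_{k+1}}^+$. Then $v_k$ is divergence-free, vanishes on $\{x_n=0\}$, and, with a suitable associated pressure $p_k$, solves a half-space Stokes system
\[
\pa_t v_k-\De v_k+\na p_k=F_k\quad\text{in }\hR\times(0,\infty),
\]
where
\[
F_k=(\pa_t\phi_k)u-2\na\phi_k\cdot\na u-(\De\phi_k)u+(\pi-(\pi)_{B_{r_k}^+})\na\phi_k+\text{Bogovskii commutators}.
\]
Applying Solonnikov's maximal regularity yields
\[
\norm{\pa_t v_k}_{L^q_tL^s_x}+\norm{\na^2 v_k}_{L^q_tL^s_x}+\norm{\na p_k}_{L^q_tL^s_x}\le c\,\norm{F_k}_{L^q_tL^s_x},
\]
and since on $Q_{r_{k+1}}^+$ one has $v_k=u$ and $\na p_k=\na\pi$, the bound transfers to $(u,\pi)$ on the smaller cylinder.

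\textbf{Iteration of the spatial exponent.} Starting at $s=p$, apply the Sobolev embedding $W^{1,p}\hookrightarrow L^{p^*}$ (with $p^*=np/(n-p)$ if $p<n$, and any large finite exponent otherwise) to $u$ and to $\pi-(\pi)_{B_{r_k}^+}$ on $Q_{r_k}^+$; the Poincar\'e inequality bounds the latter in terms of $\norm{\na\pi}_{L^q_tL^p_x}$. This upgrades $F_k$ to $L^q_tL^{p^*}_x$, and the maximal-regularity estimate then lifts $\na^2 u,\pa_t u,\na\pi$ to $L^q_tL^{p^*}_x$ on the next cylinder. Iterating a finite number of Sobolev steps along the nested cut-offs $\phi_k$ reaches any prescribed $r\ge p$, yielding \eqref{SS-max-10}.

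\textbf{H\"older continuity and main obstacle.} For \eqref{SS-max-20}, choose $r$ large enough that $\al<1-\tfrac{2}{q}-\tfrac{n}{r}$; then $u\in W^{2,1}_{r,q}(Q_{1/4}^+)$ and the standard anisotropic parabolic embedding $W^{2,1}_{r,q}\hookrightarrow\calC^{(1+\al)/2}_t\calC^{1+\al}_x$ gives $\na u\in\calC^{\al/2}_t\calC^{\al}_x(Q_{1/4}^+)$ for every $\al<1-\tfrac{2}{q}$. The chief technical obstacle throughout is pressure control during localization: multiplying $\pi$ by $\phi_k$ does not respect incompressibility, and the cut-off forcing contains $\pi$ itself rather than $\na\pi$. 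Subtracting the spatial mean $(\pi)_{B_{r_k}^+}(t)$ and exploiting $\na\pi\in L^q_tL^p_x$ from the previous stage tames this term, while the Bogovskii correction and its $W^{k,s}\to W^{k+1,s}$ continuity absorb the commutator remainders at each iteration.
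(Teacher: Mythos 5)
Your overall strategy (localize with a cut-off, correct the divergence defect by a Bogovskii operator, apply Solonnikov's half-space $L^q_tL^r_x$ maximal regularity, and bootstrap the spatial exponent via Sobolev embedding) is the paper's plan as well. There are, however, two points to flag.

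\textbf{The iteration does not start where you think it does.} The right-hand side of \eqref{SS-max-10} carries only $\norm{u}_{L^q_tW^{1,p}_x(Q_1^+)}$ and $\norm{\pi}_{L^q_tL^p_x(Q_1^+)}$ --- no $u_t$, no $\na^2u$, no $\na\pi$. Your forcing $F_k$ contains the Bogovskii time-commutator $\pa_t\calB_k(u\cdot\na\phi_k)=\calB_k\!\left(u_t\cdot\na\phi_k+u\,\pa_t\na\phi_k\right)$, which at step $k=0$ can only be estimated from the hypothesis $u_t\in L^q_tL^p_x(Q_1^+)$; likewise the term $(\pi-(\pi)_{B_{r_k}^+})\na\phi_k$ is tamed by Poincar\'e only using $\na\pi\in L^q_tL^p_x(Q_1^+)$. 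The inequality you obtain at the first pass therefore carries $\norm{u_t}_{L^q_tL^p_x(Q_1^+)}$ and $\norm{\na\pi}_{L^q_tL^p_x(Q_1^+)}$ on the right, which is not \eqref{SS-max-10}. The paper eliminates these precisely at the step you omit: it invokes Proposition~1 of \cite{Seregin00} to bound $u_t$ (hence $h_t$) locally by $u$, $\na u$, $\pi$, and Proposition~2 together with Lemma~1 of \cite{Seregin00} at indices $q_0<2<q$, $p_0<p$ followed by H\"older's inequality on the bounded cylinder to control $u$ pointwise by the same right-hand side. You need such a local a priori estimate as the entry point, or else recast the time-commutator in divergence form via $u_t=\De u-\na\pi$; as written, the claimed reduced right-hand side is not reached and \eqref{SS-max-10} is not obtained.

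\textbf{The H\"older estimate is a genuinely different route.} You invoke a mixed-norm anisotropic parabolic embedding $W^{2,1}_{r,q}\hookrightarrow\calC^{(1+\al)/2}_t\calC^{1+\al}_x$ with $\al<1-\tfrac2q-\tfrac nr$ on $Q^+_{1/4}$. The paper instead works directly with Solonnikov's restricted Green tensor $K^{\mathcal{P}}$ in \eqref{0909-1}: the pointwise kernel bounds \eqref{0801-1} and the real-interpolation estimate of Lemma~\ref{lemma0803-1} deliver the spatial H\"older seminorm of $\na W$, a two-time-point computation \eqref{0707-1}--\eqref{0524-3} supplies the temporal one, and the Bogovskii piece $\na H$ is treated separately. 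Your shortcut is cleaner if the mixed-norm embedding is available (together with an extension from the half-cylinder), but you should cite it rather than call it standard: since $\na^2u$ lives only in $L^q$ in time, the a.e.-$t$ spatial Sobolev embedding does not give bounds uniform in $t$, and the temporal continuity of $\na u$ couples $u_t$ and $\na^2u$ in a way that is not an instantaneous consequence of Morrey plus the fundamental theorem of calculus. The paper's choice to prove this by hand via the Green tensor is an indication that it preferred not to rely on such an embedding as a black box.
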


In next theorem, we prove   that the estimates \eqref{Seregin-50} and \eqref{SS-max-10} are indeed optimal. To be more precise, there is a solution of the Stokes equations \eqref{Stokes-10}-\eqref{bddata-20} such that $\nabla u$ and $\pi$ belong to $L^p(Q_{1}^+)$ but
$u_t, \nabla \pi\notin L^{\tilde{p}}(Q_{\frac{1}{2}}^+)$ for any $\tilde{p}$ with $\tilde{p}>p$.

\begin{theo}\label{maintheorempressure-SS}
Let $1 < p, \, q <  \infty$.
Then, there exist   a solution
 $u$ of  Stokes equations \eqref{Stokes-10} and Navier-Stokes equations \eqref{nse-30} with the boundary condition \eqref{bddata-20} such that
\begin{align}\label{0711-1}
\norm{\na^2 u}_{L^{\frac{3q}2}(Q_{1}^+)}+\norm{ D_t u}_{L^{q}(Q_{1}^+)}+\norm{\nabla \pi}_{L^{q}(Q_{1}^+)}<\infty,
\end{align}
but for any  $r_1>q$ and $r_2>\frac{3q}{2}$
\begin{align}\label{0711-2}
 \| D_t u \|_{L^{r_1} (Q_{\frac{1}{2}}^+)} =\infty, \qquad \| \nabla \pi \|_{L^{r_1} (Q_{\frac{1}{2}}^+)} =\infty, \qquad \| \nabla^2 u \|_{L^{r_2} (Q_{\frac{1}{2}}^+)} =\infty.
\end{align}
%If $q > \frac{2(n+2)}{n+8}$,
%  there exists a weak solution  $u$ of   Navier-Stokes equations \eqref{nse-30} with the boundary condition \eqref{bddata-20} such that $u$ satisfies \eqref{0711-1} and \eqref{0711-2}.
\end{theo}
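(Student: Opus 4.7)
I will adapt the construction used in the proof of Theorem~\ref{maintheorem-SS}, in the same spirit as in \cite{Kang05,CKca,KLLT21}. The idea is to build $(u,\pi)$ as a Stokes solution in the half-space driven by a boundary datum on $\{x_n=0\}$ that vanishes on $B_2\cap\{x_n=0\}$ (so the no-slip condition \eqref{bddata-20} holds automatically in the local region) but is nonzero and carries a critical $L^q$ time singularity on a distant portion of the hyperplane. The non-local coupling between velocity and pressure then transports the sharp time singularity into $Q_1^+$, while the spatial smoothing of the Stokes kernel produces strictly better spatial integrability for $\nabla^2 u$ than for $D_tu$ and $\nabla\pi$.

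Concretely, let $g(x',t)=\varphi(x')\,h(t)$ be tangential with $\varphi\in C_c^\infty(\{|x'|>3\})$, and choose $h$ supported in $(0,1)$ so that $h\in L^q(\mathbb{R})\setminus L^{r}(\mathbb{R})$ for every $r>q$ (for instance $h(t)=t^{-1/q}(\log(2/t))^{-1}\chi_{(0,1)}(t)$). Solve the Stokes equations in $\mathbb{R}^n_+\times(0,\infty)$ with $u=g$ on $\{x_n=0\}$ via Solonnikov's explicit Poisson tensor. Because $g$ vanishes on $B_2\cap\{x_n=0\}$, the resulting $(u,\pi)$ satisfies \eqref{Stokes-10}--\eqref{bddata-20}. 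Inside $Q_1^+$ the representation kernels are smooth in $x$, and a direct asymptotic computation gives
\begin{equation*}
\nabla\pi(x,t)=h(t)\,\Psi(x)+R_\pi(x,t),
\end{equation*}
where $\Psi$ is smooth and nontrivial on $\overline{B_{\frac{1}{2}}^+}$ and $R_\pi$ is uniformly bounded. This immediately produces $\nabla\pi\in L^q(Q_1^+)\setminus L^{r_1}(Q_{\frac{1}{2}}^+)$ for every $r_1>q$, and via $D_tu=\Delta u-\nabla\pi$ the same dichotomy transfers to $D_tu$.

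The most delicate step, and the main obstacle, is establishing the precise exponent $3q/2$ for $\nabla^2 u$. Unlike $\nabla\pi$ and $D_tu$, the representation of $\nabla^2 u$ in $Q_1^+$ involves two spatial derivatives of the Stokes Poisson tensor, producing a kernel of Calder\'on--Zygmund type in the tangential variables; a Hardy--Littlewood--Sobolev estimate along the tangential directions then gains exactly the half spatial factor needed to bring the $L^q$ time norm up to an $L^{3q/2}$ space-time norm, yielding $\nabla^2 u\in L^{3q/2}(Q_1^+)$. The matching lower bound $\|\nabla^2 u\|_{L^{r_2}(Q_{\frac{1}{2}}^+)}=\infty$ for $r_2>3q/2$ follows by reading off the leading singular part of that kernel and comparing it with $D_tu$ through the PDE, so that the threshold is saturated. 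Finally, for the Navier--Stokes case one checks that the nonlinear term $(u\cdot\nabla)u$ built from the constructed $u$ lies in $L^q(Q_1^+)$ (using $u\in L^\infty$ together with the improved spatial integrability of $\nabla u$), so it can be treated as a regular Stokes forcing and the same pair $(u,\pi)$, up to an inessential correction of the same regularity class, provides the desired very weak solution of \eqref{nse-30}--\eqref{bddata-20}.
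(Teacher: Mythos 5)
Your plan shares the big-picture strategy with the paper (put a sharp temporal singularity in boundary data supported away from $B_2$, and let the non-local Stokes kernel transport it into $Q_1^+$), but diverges at the two points that are actually hard, and each divergence has a genuine gap.

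\textbf{Normal versus tangential data, and the decomposition of $\nabla\pi$.} The paper chooses the \emph{normal} boundary component $g_n(y',t)=g_n^{\mathcal S}(y')\,g_n^{\mathcal T}(t)$ and then splits the solution as $(w^{\mathcal H},p^{\mathcal H})+(w^{\mathcal S},p^{\mathcal S})$, where $\phi(x,t)=c_n\int_{\Rn}|x-y'|^{2-n}g_n(y',t)\,dy'$, $w^{\mathcal H}=\nabla\phi$, $p^{\mathcal H}=-D_t\phi$. This is an \emph{exact} Stokes solution, and the point of using normal data is that the $\delta(t)\,D_{x_i}N$ term in Solonnikov's tensor \eqref{Poisson-tensor-K} appears only for $j=n$, so the pressure picks up $D_tg_n^{\mathcal T}$ with no parabolic smoothing. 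Your tangential datum $g=(\varphi h,0)$ sees none of the $\delta(t)$ terms, so there is no reason for the claimed factorization $\nabla\pi=h(t)\Psi(x)+R_\pi$ with $R_\pi$ bounded; in fact, with purely tangential data the Stokes Poisson tensor provides parabolic smoothing in time and the sharp $L^q$ threshold for $\nabla\pi$ would not be reproduced. The assertion ``a direct asymptotic computation gives'' is where the proof actually needs to happen, and it is not correct as stated.

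\textbf{The $\frac{3q}{2}$ exponent for $\nabla^2u$.} You attribute the gain from $q$ to $\frac{3q}{2}$ to a Hardy--Littlewood--Sobolev estimate in the tangential variables; this is the wrong mechanism and does not give the right exponent. In the paper the exponent arises from a one-dimensional anisotropic Besov/trace argument in the $(x_n,t)$ variables: one needs a $g_n^{\mathcal T}$ with $D_tg_n^{\mathcal T}\in L^q$ but $g_n^{\mathcal T}\notin\dot B^{1-\frac{1}{2r}}_{rr}(\mathbb R)$ for $r>\frac{3q}{2}$ (see \eqref{1222-2}); the threshold $r=\frac{3q}{2}$ is exactly where the one-dimensional Sobolev embedding $W^1_q\hookrightarrow B^{1-\frac{1}{2r}}_{rr}$ fails, and $\dot B^{1-\frac{1}{2r}}_{rr}$ is the trace of the heat-Sobolev space on $\{x_n=0\}$ (this is \eqref{0710-4}). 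Your singular profile $h\in L^q\setminus\bigcup_{r>q}L^r$ controls only the $D_tu$/$\nabla\pi$ side; you also need the extra Besov condition on its primitive to obtain the blow-up of $\nabla^2u$ for $r_2>\frac{3q}{2}$, and you have not said anything that pins this down. Moreover, the finite side $\nabla^2u\in L^{3q/2}$ in the paper comes from the anisotropic estimate \eqref{0310-2} of \cite{LS} applied to the corrected tangential datum $G=g-w^{\mathcal H}|_{x_n=0}\in\dot B^{2-\frac{2}{3q},\,1-\frac{1}{3q}}_{\frac{3q}{2}\frac{3q}{2}}$, not from an HLS bound.

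In short: the skeleton is right but the two pillars (the explicit form of $\nabla\pi$ and the $3q/2$ threshold for $\nabla^2u$) are not substantiated, and the chosen tangential datum actually undermines the first pillar. Switching to a normal boundary component and going through the harmonic-extension decomposition, together with the anisotropic Besov trace argument, is what makes both claims provable.
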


\begin{rem}
We do not know
whether or not there exists a solution $u$ of  Stokes equations  \eqref{Stokes-10}-\eqref{bddata-20} such that $\| \nabla^2 u \|_{L^{r_1} (Q_{\frac{1}{2}}^+)} =\infty$ for $r_1>q$. In fact, our construction shows that $\nabla^2 u\in L^{\frac{3q}{2}} (Q_{\frac{1}{2}}^+)$.
\end{rem}

This paper is organized as follows.
In Section \ref{prelim}, we introduce the function spaces and we recall some
known results and introduce results useful for our purpose.
Section \ref{SS-half} is devoted to recalling Poisson kernel for Stokes equations in a half-space and two useful  lemmas are proved as well.
In Section \ref{thm1-1ss}, Section \ref{thm1-4ss} and Section \ref{proofss}, we present
the proofs of Theorem \ref{maintheorem-SS}, Theorem \ref{Stokes-maximal} and Theorem \ref{maintheorempressure-SS} for the Stokes equations, respectively.
In the case of the Navier-Stokes equations, proofs of Theorem \ref{maintheorem-SS} and Theorem \ref{maintheorempressure-SS} are given in Section \ref{proof-ns}.
Appendix provides a simple proof of Theorem \ref{maintheorem-SS} for the case $p=2$, and presents
the details of Remark \ref{rem-thm11}, Remark \ref{rem0711-2} and Remark \ref{rem0314} as well.

\section{Preliminaries}
\label{prelim}
\setcounter{equation}{0}

For notational convention,  we denote $x=(x^{\prime},x_n)$, where the symbol
${\prime}$ means the coordinate up to $n-1$, that is,
$x^{\prime}=(x_1,x_2,\cdot\cdot\cdot,x_{n-1})$. We write $D_{x_i} u$
  as the partial derivative of $u$ with respect to $x_i, \,\, 1 \leq i \leq n$, i.e., $D_{x_i} u(x) =
\frac{\pa }{\pa x_i}u(x)$. Throughout this paper we denote by $c$ various generic positive constant and by $c(*,\cdots,*)$ depending on the quantities in the
parenthesis.

%For $\al \in {\mathbb R}$, we
%consider a distribution $h_\al $  whose Fourier transform
%in ${\mathbb R}^{n+1}$ is defined by
%\begin{eqnarray*}
%\widehat{ h_{\al}} (\xi,\tau) = c_\al(  4\pi^2 |\xi|^2 + i
%\tau)^{-\frac{\al}{2}}, \quad (\xi, \tau) \in {\mathbb R}^n \times {\mathbb R}.
%\end{eqnarray*}
Let $\al \in {\mathbb R}$ and $1\leq p \leq \infty$. We define an
anisotropic homogeneous Sobolev space $\dot W^{\al, \frac{\al}2}_p ({\mathbb R}^{n+1})$ by
\begin{eqnarray*}
\dot W^{\al, \frac{\al}2}_p  ({\mathbb R}^{n+1}) = \{ f \in {\mathcal S}'({\mathbb
R}^{n+1}) \, | \, f  = h_{\al} * g, \quad \mbox{for some} \quad g   \in L^p ({\mathbb R}^{n+1}) \}
\end{eqnarray*}
with norm
%\begin{align*}
$\|f\|_{\dot W^{\al, \frac{\al}2}_p  ({\mathbb R}^{n+1})} : = \| g \|_{L^p({\mathbb R}^{n+1})} \,  = \|  h_{-\al} * f  \|_{L^p({\mathbb R}^{n+1})},$
%\end{align*}
 where
$*$ is a convolution in ${\mathbb R}^{n+1}$ and ${\mathcal S}^{'}({\mathbb
R}^{n+1})$
 is the dual space of the Schwartz space
${\mathcal S}({\mathbb R}^{n+1})$.
Here $h_\al $  is a distribution whose Fourier transform
in ${\mathbb R}^{n+1}$ is defined by
\begin{eqnarray*}
\widehat{ h_{\al}} (\xi,\tau) = c_\al(  4\pi^2 |\xi|^2 + i
\tau)^{-\frac{\al}{2}}, \quad (\xi, \tau) \in {\mathbb R}^n \times {\mathbb R}.
\end{eqnarray*}
 In case that $\alpha=k \in {\mathbb N} \cup \{ 0\}$, we note that
\begin{align*}
 \|f\|_{\dot W^{k, \frac{k}2}_p  ({\mathbb R}^{n+1})  } & \approx \sum_{k_1 + k_2 + \cdots k_n + l = k} \| D_{x_l}^{k_1}D_{x_2}^{k_2} \cdots D_{x_n}^{k_n}D^{\frac12 l}_{t}   f\|_{L^p ({\mathbb R}^{n+1})},
\end{align*}
where $\displaystyle D^\frac12_t f(t) = D_t \int_{-\infty}^t \frac{f(s)}{(t -s)^\frac12} ds$ and $D^{k  +\frac12}_tf = D^\frac12_t D^k_tf$. In particular, when $ k =0$, it follows that $\displaystyle\| f\|_{\dot W^{0,0}_p({\mathbb R}^{n+1})}  = \| f\|_{ L^p({\mathbb R}^{n+1})}$.

Next, we recall an anisotropic homogeneous Besov space $\dot B^{\al, \frac{\al}2}_{pq} ({\mathbb R}^{n+1})$. Let $\phi \in {\mathcal
S} ({\mathbb R}^{n+1})$ such that $\hat \phi$, the Fourier transform of $\phi$.  satisfies
\begin{eqnarray*}
 \left\{\begin{array}{rl}
&\displaystyle\hat \phi(\xi,\tau) > 0  \quad \mbox{ on }  2^{-1} < |\xi| + |\tau|^\frac12 < 2 ,\\
&\vspace{-2mm}\\
& \displaystyle\hat \phi(\xi,\tau)=0 \quad \mbox{ elsewhere }, \\
&\vspace{-2mm}\\
&\displaystyle\sum_{ -\infty < i < \infty } \hat \phi(2^{-i}\xi, 2^{-2i}\tau) =1 \ (
(\xi,\tau) \neq (0,0)).
\end{array}\right.
\end{eqnarray*}
We then introduce functions  $\phi_i  \in {\mathcal S}({\mathbb R}^{n+1})$, $i\in \mathbb Z$ in terms of $\phi$, which are defined by
\begin{eqnarray}\label{psi1}
\begin{array}{ll}
\widehat{\phi_i}(\xi, \tau) &= \hat \phi(2^{-i} \xi, 2^{-2i} \tau) ,\qquad i = 0, \pm 1, \pm 2 , \cdots.
\end{array}
\end{eqnarray}
Note that $\phi_i(x,t) = 2^{(i+2)n} \phi(2^i x, 2^{2i} t)$.
For $\al \in {\mathbb R}$ we define the anisotropic homogeneous Besov space
$\dot B^{\al,\frac12 \al}_{pr} ({\mathbb R}^{n+1})$ by
\begin{eqnarray*}
\dot B^{\al,\frac12 \al}_{pr} ({\mathbb R}^{n+1}) = \bket{ f \in {\mathcal
S}^{'}({\mathbb R}^{n+1}) \, | \, \|f\|_{\dot B^{\al, \frac12 \al}_{pr}} <
\infty \, }
\end{eqnarray*}
with the norms
\begin{align*}
 \|f\|_{\dot B^{\al, \frac12 \al}_{pr}} :&  =   \bke{
\sum_{ -\infty< i  < \infty} (2^{\al i} \|\phi_i *
f\|_{L^p})^r}^{\frac1r}, \quad 1 \leq r < \infty,\\
 \|f\|_{\dot B^{\al, \frac12 \al}_{p\infty}} :&  =
\sup_{ -\infty< i  < \infty} 2^{\al i} \|\phi_i *
f\|_{L^p}.
\end{align*}
Let $I$ be a open interval.
The anisotropic homogeneous Sobolev space $\dot W^{\al, \frac{\al}2}_p (\R_+ \times I)$ in $\R_+ \times I$ is defined by
\begin{align*}
\dot W^{\al, \frac{\al}2}_p (\R_+ \times I) = \bket{f= F |_{\R_+ \times I}  \, | \, F \in \dot W^{\al, \frac{\al}2}_p (\R \times {\mathbb R}) }
\end{align*}
with norm
\begin{align*}
\| f\|_{\dot W^{\al, \frac{\al}2}_p (\R_+ \times {\mathbb R}) } = \inf \bket{ \| F\|_{\dot W^{\al, \frac{\al}2}_p (\R \times {\mathbb R})  }  \, | \, F \in \dot W^{\al, \frac{\al}2}_p (\R \times {\mathbb R}), \,\, F|_{\R_+ \times I} =f }.
\end{align*}
Similarly, we define the the anisotropic homogeneous Besov space $\dot B^{\al, \frac{\al}2}_{pq} (\R_+ \times I)$.
The  properties of the anisotropic Besov spaces are comparable with the properties of  Besov spaces.
In particular, the following properties will be used later.

\begin{prop}
\label{prop2}
Let $\Om$ be $\R $ or $\R_+$.
\begin{itemize}
\item[(1)]
For $\al > 0$
\begin{align*}
\dot B^{\al,\frac{\al}{2}}_{pp} \bke{\Omega\times I}   &=L^p \bke{I;\dot B^\al_{pp}(\Omega)}\cap L^p \bke{\Omega; \dot B^{\frac{\al}{2}}_{pp}(I)}.
 \end{align*}

\item[(2)]
Suppose that $ 1 \leq p_0\leq p_1  \leq \infty, \,  1 \leq q_0\leq q_1  \leq \infty$ and $ \al_0\geq \al_1$ with $\al_0 - \frac{n+2}{p_0} = \al_1 - \frac{n+2}{p_1}$.
Then, the following inclusion holds
\begin{align*}
%&\dot W^{s_0,\frac{s_0}{2}}_{p_0} (\R \times {\mathbb R}) \subset  \dot B^{s_1,\frac{s_1}{2}}_{p_1} (\R \times {\mathbb R}), \qquad
%\dot  B^{s_0,\frac{s_0}{2}}_{p_0 q_0} (\Omega\times I) \subset   \dot W^{s_1,\frac{s_1}{2}}_{p_1 q_1} (\Omega\times I),\qquad
\dot  B^{\al_0,\frac{\al_0}{2}}_{p_0 q_0} (\Omega\times I) \subset   \dot B^{\al_1,\frac{\al_1}{2}}_{p_1 q_1} (\Omega\times I).
\end{align*}
%
%If $ 1 <  p_0\leq p_1 < \infty$ and $ s_0\geq s_1$ with $s_0 - \frac{n+2}{p_0} = s_1 - \frac{n+2}{p_1}$.
%Then, the following inclusion holds
%\begin{align*}
%%\label{prop3}
%\dot W^{s_0,\frac{s_0}{2}}_{p_0} (\R \times {\mathbb R}) \subset  \dot B^{s_1,\frac{s_1}{2}}_{p_1} (\R \times {\mathbb R}), \\
%%\mbox{ with }
%%\|f \|_{\dot H^{s_1,\frac{s_1}{2}}_{p_1} (\R \times {\mathbb R})   } \leq c \| f \|_{ \dot H^{s_0,\frac{s_0}{2}}_{p_0} (\R \times {\mathbb R})  },\\
%\dot B^{s_0,\frac{s_0}{2}}_{p_0} (\R \times {\mathbb R}) \subset  \dot W^{s_1,\frac{s_1}{2}}_{p_1 } (\R \times {\mathbb R}).
%%\mbox{ with }
%%\label{prop3}
%%\|f \|_{\dot B^{s_1,\frac{s_1}{2}}_{p_1 q_1 } (\R \times {\mathbb R})   } \leq c \| f \|_{ \dot B^{s_0,\frac{s_0}{2}}_{p_0 q_0} (\R \times {\mathbb R})  }.
%\end{align*}

\item[(3)]
%\footnote{KK:\color{red} Let $f \in  \dot W_p^{\al, \frac{\al}2} (\Omega\times I)$ or $f \in  \dot B_p^{\al, \frac{\al}2} (\Omega\times I)$ with $\al > \frac1p$. Then, $f|_{x_n =0} \in  \dot B_p^{\al -\frac1p, \frac{\al}2 -\frac1{2p}}(\Rn \times I)$ and
%\begin{align*}
%\| f \|_{ \dot B_p^{\al -\frac1p, \frac{\al}2 -\frac1{2p}} (\Rn \times I)} \leq c \| f \|_{ \dot W_p^{\al, \frac{\al}2} ( \Omega\times I)},\\
%\| f \|_{\dot  B_p^{\al -\frac1p, \frac{\al}2 -\frac1{2p}} (\Rn \times I)} \leq c \| f \|_{ \dot B_p^{\al, \frac{\al}2} ({\color{blue}\Omega} \times I)}.
%\end{align*} }
Suppose that $f \in  \dot W_p^{\al, \frac{\al}2} (\Omega\times I)$ and $f \in  \dot B_{pp}^{\al, \frac{\al}2} (\Omega\times I)$ with $\al > \frac1p$. Then,  $f|_{x_n =0} \in  \dot B_{pp}^{\al -\frac1p, \frac{\al}2 -\frac1{2p}}(\Rn \times I)$ and following estimates are satisfied.
\begin{align*}
\| f \|_{ \dot B_{pp}^{\al -\frac1p, \frac{\al}2 -\frac1{2p}} (\Rn \times I)} \leq c \| f \|_{ \dot W_p^{\al, \frac{\al}2} ( \Omega \times I)},\\
\| f \|_{\dot  B_{pp}^{\al -\frac1p, \frac{\al}2 -\frac1{2p}} (\Rn \times I)} \leq c \| f \|_{ \dot B_{pp}^{\al, \frac{\al}2} ( \Omega \times I)}.
\end{align*}

%\item[(5)]
%For $f \in  \dot W_p^{\al, \frac{\al}2} (\R \times {\mathbb R})$ and $f \in  \dot B_p^{\al, \frac{\al}2} (\R \times {\mathbb R}), \, \al > \frac2p$, $f|_{t =0 } \in  \dot B_p^{\al -\frac2p }(\R )$ with
%\begin{align*}
%\| f|_{t =0} \|_{ \dot B_p^{\al -\frac2p } (\R )} \leq c \| f \|_{ \dot W_p^{\al, \frac{\al}2} (\R \times {\mathbb R})},\quad
%\| f|_{t=0} \|_{ \dot B_p^{\al -\frac2p } (\R )} \leq c \| f \|_{ \dot B_p^{\al, \frac{\al}2} (\R \times {\mathbb R})}.
%\end{align*}
%
 \end{itemize}
\end{prop}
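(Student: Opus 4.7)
The three parts are standard properties of the anisotropic Besov scale once the Littlewood--Paley formalism $\{\phi_i\}$ is in place; I sketch each briefly.

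For (1), my plan is to expand both sides via Littlewood--Paley decompositions --- the anisotropic $\{\phi_i\}$ on the left, and purely spatial $\{\psi_j^x\}$ and purely temporal $\{\psi_k^t\}$ decompositions on the right --- and invoke Fubini. Since the Besov indices are both equal to $p$, the spatial Besov norm integrated in time reduces to $\sum_j 2^{\alpha jp}\|\psi_j^x * f\|^p_{L^p(\Omega\times I)}$, and analogously for the temporal part. The comparison with $\sum_i 2^{\alpha ip}\|\phi_i*f\|^p_{L^p(\Omega\times I)}$ rests on the geometric observation that the anisotropic annulus $\{|\xi|+|\tau|^{1/2}\sim 2^i\}$ overlaps the spatial annulus $\{|\xi|\sim 2^j\}$ only when $i\gtrsim j$, and overlaps the temporal slab $\{|\tau|\sim 2^{2k}\}$ only when $i\gtrsim k$; an almost-orthogonality argument combined with Young's inequality on sequences yields the two-sided bound. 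The half-space case follows by taking an infimum over extensions, using that the restriction operator is bounded on each dyadic piece.

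For (2), I would establish the scaled anisotropic Bernstein inequality $\|g\|_{L^{p_1}} \leq c\, 2^{(n+2)(\frac{1}{p_0}-\frac{1}{p_1}) i} \|g\|_{L^{p_0}}$ for $g$ whose Fourier transform is supported in an anisotropic annulus of scale $2^i$; the factor $n+2$ reflects the support volume $2^{ni}\cdot 2^{2i}$. Applied to $g=\phi_i*f$ and combined with the hypothesis $\alpha_0-\frac{n+2}{p_0}=\alpha_1-\frac{n+2}{p_1}$, the powers of $2^i$ regroup as $2^{\alpha_1 i}\|\phi_i*f\|_{L^{p_1}}\leq c\, 2^{\alpha_0 i}\|\phi_i*f\|_{L^{p_0}}$, and the monotonicity $\ell^{q_0}\hookrightarrow \ell^{q_1}$ (since $q_0\leq q_1$) closes the embedding in the whole-space case; restriction handles the half-space. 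For (3), after extending $f$ to the whole space, I would decompose $f=\sum_i \phi_i*f$ and apply a one-variable trace/Bernstein estimate across the suppressed $x_n$ direction,
\[
\|(\phi_i*f)|_{x_n=0}\|_{L^p(\Rn\times I)} \leq c\, 2^{i/p}\|\phi_i*f\|_{L^p(\R\times I)},
\]
where the $2^{i/p}$ loss is exactly the Sobolev deficit from restricting to a codimension-one slice of the anisotropic band. Weighting by $2^{(\alpha-\frac{1}{p})ip}$ and summing in $i$ yields the Besov trace inequality; the Sobolev-space version follows either from the lifting $\|f\|_{\dot W^{\alpha,\alpha/2}_p}\approx\|h_{-\alpha}*f\|_{L^p}$ combined with a slicing argument for Riesz potentials, or by combining the Besov estimate with the continuous embedding $\dot W^{\alpha,\alpha/2}_p\hookrightarrow \dot B^{\alpha,\alpha/2}_{p,\max(p,2)}$.

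The principal technical obstacle is the almost-orthogonality control in (1): the anisotropic band at scale $2^i$ intersects infinitely many spatial and temporal Littlewood--Paley annuli, so summing the cross-terms without a logarithmic loss requires the exact matching of the spatial and temporal exponents $(\alpha,\alpha/2)$ built into the definition of the anisotropic scale. Once (1) is secured, the Bernstein inequality in (2) and the one-variable trace estimate in (3) are mechanical adaptations of the isotropic Triebel--Lizorkin--Besov theory.
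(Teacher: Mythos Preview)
The paper does not actually prove this proposition: it merely records in a remark that (1) is \cite[Theorem~3]{DT} (Dappa--Triebel) and that (2), (3) are Theorems~6.5.1 and~6.6.1 of \cite{BL} (Bergh--L\"ofstr\"om). Your sketch therefore goes further than the paper does, and the Littlewood--Paley/Bernstein arguments you outline are precisely the standard route taken in those references, so there is no real divergence of method.

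One small caveat on your part~(3): the second alternative you offer for the Sobolev trace---deducing it from the Besov trace via the embedding $\dot W^{\alpha,\alpha/2}_p \hookrightarrow \dot B^{\alpha,\alpha/2}_{p,\max(p,2)}$---does not quite close when $p<2$, since then $\max(p,2)=2$ and you would land in $\dot B^{\alpha-1/p,\,\alpha/2-1/(2p)}_{p,2}$ rather than the sharper $\dot B^{\alpha-1/p,\,\alpha/2-1/(2p)}_{p,p}$. Your first alternative (slicing the Riesz/heat potential representation), or the classical route in \cite{BL} via real interpolation of integer-order trace theorems, avoids this loss.
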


\begin{rem}
For the proof of $(1)$ in Proposition \ref{prop2}, one can refer to \cite[Theorem 3]{DT}.
We also consult Theorem 6.5.1 and Theorem 6.6.1 in \cite{BL} for $(2)$ and $(3)$  in Proposition \ref{prop2}, respectively.
\end{rem}

We next remind the non-homogeneous anisotropic Sobolev space and Besov space defined by
\begin{align*}
\| f \|_{W^{\al, \frac{\al}2}_p (\Om \times I)}& = \| f \|_{L^p (\Om \times I)} + \| f \|_{\dot W^{\al, \frac{\al}2}_p (\Om \times I)},\\
\| f \|_{B^{\al, \frac{\al}2}_{pp} (\Om \times I)}& = \| f \|_{L^p (\Om \times I)} + \| f \|_{\dot B^{\al, \frac{\al}2}_{pp} (\Om \times (0, I)},
\end{align*}
where $ \al > 0$ and $1 \leq p \leq \infty$.
Then, Proposition \ref{prop2} holds for non-homogeneous anisotropic Sobolev spaces and Besov spaces (see \cite{amman-anisotropic, Triebel2, Triebel}).

Let $\Om$ be a domain in $\R$ and $I$ be an open interval. Let $X(\Om)$ be a function space defined in $\Om$. We denote by  $L^q_t X_x (\Om \times I), \, 1 \leq q \leq \infty$  with norm
\[
\|f\|_{L^q_t X_x (\Om \times I)}=\Big(\int_I \|f(t)\|_{X(\Om)}^q dt\Big)^{\frac{1}{q}}.
\]
In case that $X(\Omega)=L^q(\Omega)$, we denote $L^q(\Om \times I) = L^q_t L^q_x(\Om \times I)$.

%For mixed norm space, we denote that
%\begin{align*}
%L^q_tW^{k,p}(\Om \times (0, T)): = L^q(0, T; W^{k,p}(\Om))
%\end{align*}

%\begin{rem}\label{rem0208}
%The properties in Proposition \ref{prop2} of the homogeneous anisotropic Besov spaces in $\R \times {\mathbb R}$ hold for the homogeneous Besov spaces in $\R_+ \times  {\mathbb R}_+$ and $\Rn \times  {\mathbb R}_+ $ (see \cite{amman-anisotropic, Triebel,Triebel2}).
%
%
%
%\end{rem}

%We shall consider the trace operator
%\begin{align*}
%Tr : {\mathcal S} ({\mathbb R}^{n+1}) \ri {\mathcal S} ({\mathbb R}^{n}), \quad
%(Tr f)(x',t)  =  f(x', 0,t).
%\end{align*}
%Now, we introduce the trace theorem of the homogeneous Sobolev space.
%\begin{prop}\label{prop0308}
%Let $1 < p < \infty$ and $\al > \frac1p$. Then, the trace operator can be extended  by
%\begin{align}\label{trace}
%Tr : \dot W^{\al, \frac{\al}2}_p (\R_+ \times {\mathbb R }) \ri \dot B^{\al -\frac1p, \frac{\al}2 - \frac1{2p}}_{pp} (\Rn \times {\mathbb R }),\\
%\label{trace2} Tr :  W^{\al, \frac{\al}2}_p (\R_+ \times {\mathbb R }) \ri  B^{\al -\frac1p, \frac{\al}2 - \frac1{2p}}_{pp} (\Rn \times {\mathbb R }).
%\end{align}
%\end{prop}
%%The trace theorem for generalized Sobolev spaces are found in Theorem 6.6.1 in \cite{BL}. With the same argument, Proposition \ref{prop0308} is proved.
%
%
%
%For the details of the proof of Proposition \ref{prop0308}, we refer
% Theorem 6.6.1  in \cite{BL}. As a matter
%of fact, in \cite{BL}, the usual nonhomogeneous Sobolev case has been dealt,
%however, the proofs found in \cite{BL} could be easily modified to
%the parabolic homogeneous Sobolev.

We introduce notions of {\it very weak solutions} for the
Stokes equations and the Navier-Stokes equations with non-zero boundary
values in a half-space $\hR$. To be more precise, we consider first
the following Stokes equations in $\hR$:
\begin{equation}\label{Stokes-bvp-200}
w_t - \De w + \na \pi = f, \qquad \mbox{div } w =0,\quad
\mbox{ in }
 \hR \times (0,\infty)
\end{equation}
with zero initial data and  non-zero boundary value
\begin{equation}\label{Stokes-bvp-210}
\rabs{w}_{t=0}= 0, \qquad  \rabs{w}_{x_n =0} = g.
\end{equation}
We now define  very weak solutions of the Stokes equations
\eqref{Stokes-bvp-200}-\eqref{Stokes-bvp-210}.
\begin{defin}\label{stokesdefinition}
Let $g \in L^1_{\rm{loc}} (\Rn \times (0, \infty))$ and $ f \in
L^1_{\rm{loc}}(\hR\times (0, \infty))$. A vector field $w\in
L^1_{\rm{loc}}( \hR \times (0, \infty))$ is called a very weak solution of the Stokes equations \eqref{Stokes-bvp-200}-\eqref{Stokes-bvp-210},
if the following equality is  satisfied:
\begin{align*}
-\int^\infty_0\int_{\hR }w \cdot \Delta \Phi
dxdt=\int^\infty_0\int_{\hR}\bke{w \cdot \Phi_t+ f \cdot \Phi} dxdt
-\int^\infty_0 \int_{\Rn} g \cdot D_{x_n}\Phi  dx' dt
\end{align*}
for each $\Phi\in C^2_c(\overline{\hR}\times [0,\infty))$ with
\begin{align}\label{0528-1}
\mbox{\rm div } \Phi=0,\quad \Phi\big|_{\Rn \times (0, \infty)}=0.
\end{align}
In addition, for each $\Psi\in C^1_c(\overline{\hR})$ with
 $\Psi \big|_{\Rn}=0$
\begin{equation}\label{Stokes-bvp-2200}
\int_{\hR} w(x,t) \cdot \na \Psi(x) dx =0  \quad  \mbox{ for all}
\quad 0 < t< \infty.
\end{equation}
\end{defin}

We recall some estimates which are useful to our purpose.
In the sequel, we denote by $N$ and $\Gamma$ the fundamental solutions of the Laplace and the heat equation, respectively, i.e.
\begin{align*}
N(x) = \left\{\begin{array}{ll}\vspace{2mm} \displaystyle -\frac{1}{(n-2)\om_n} \frac{1}{|x|^{n-2}},& n \geq 3\\
\displaystyle \frac1{2\pi} \ln |x|, & n =2
\end{array}
\right. ,\qquad
\Gamma(x,t)  = \left\{\begin{array}{ll} \vspace{2mm}
\displaystyle\frac{1}{(4\pi t)^{\frac{n}{2}}} e^{ -\frac{|x|^2}{4t} }, &   t > 0,\\
0, & t < 0,
\end{array}
\right.
\end{align*}
where $\om_n$ is the measure of the unit sphere in $\R$.
\begin{prop}\label{prop0803-1}
Let $w$ be solution of \eqref{Stokes-bvp-200}-\eqref{Stokes-bvp-210} with $g =0$.
Then,
\begin{align*}%\label{apendix1}
\| w(t) \|_{L^p (\R_+)} \leq c \big( \|\Ga * {\mathbb P} f(t) \|_{L^p (\R_+)}  +  \|\Ga^* * {\mathbb P} f(t) \|_{L^p (\R_+)} \big) \quad 0 < t < \infty,
\end{align*}
where ${\mathbb P}$ is Helmholtz decomposition in $\R_+$ and
\begin{align*}
\Ga * {\mathbb P} f(x,t) & = \int_0^t \int_{\R_+} \Ga(x -y, t -s) {\mathbb P}f(y,s) dyds,\\
\Ga^* *  {\mathbb P}f(x,t)& = \int_0^t \int_{\R_+} \Ga(x' -y', x_n + y_n , t -s) {\mathbb P}f(y,s) dyds.
\end{align*}
\begin{proof}
See the proof of Lemma 3.3 in \cite{CK}.
\end{proof}

\end{prop}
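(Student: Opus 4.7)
The plan is to derive an explicit integral representation of $w$ using the Green matrix of the half-space Stokes system and to control that kernel by the two heat convolutions appearing on the right-hand side. Since the Stokes system absorbs any gradient forcing into the pressure (writing $f = \mathbb{P}f + \nabla q$ merely shifts $\pi$ by $q$), I first replace $f$ by $\mathbb{P}f$ without changing $w$; the reduced forcing is solenoidal on $\R_+$ with vanishing normal trace on $\{x_n = 0\}$.

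For such a forcing, I would invoke the explicit form of the Stokes Green matrix on the half-space with no-slip boundary (going back to Solonnikov and Ukai). Its kernel $K_{ij}(x,y,t)$ decomposes as
\[
K_{ij}(x,y,t) \;=\; \Gamma(x-y,t)\,\delta_{ij} \;-\; \Gamma(x'-y',\,x_n+y_n,\,t)\,\delta_{ij} \;+\; \widetilde{K}_{ij}(x,y,t),
\]
where the first term is free-space heat propagation, the second is the reflected Dirichlet compensation, and $\widetilde{K}_{ij}$ is a pressure correction built from tangential Riesz-type singular operators applied to the reflected heat kernel. Duhamel's formula then splits $w$ accordingly as $\Gamma * \mathbb{P}f \;-\; \Gamma^* * \mathbb{P}f \;+\; v_3$, where the first two pieces already match the right-hand side of the claimed inequality via the triangle inequality.

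The main obstacle is to bound $v_3$ in $L^p(\R_+)$ by $\|\Gamma^* * \mathbb{P}f\|_{L^p}$ uniformly in $t$, since the kernel $\widetilde{K}_{ij}$ is not pointwise dominated by the reflected Gaussian. I would handle this by recognising $\widetilde{K}_{ij}$ as a tangential Fourier multiplier (a composition of horizontal Riesz transforms in $x'$) acting on $\Gamma^* * \mathbb{P}f$, then invoking classical $L^p$-boundedness of Calderón--Zygmund operators, or equivalently the Mihlin multiplier theorem on $\mathbb{R}^{n-1}$, to recover the estimate with a constant independent of $t$. This multiplier reduction, applied uniformly in the normal variable $x_n$ after an extension-by-reflection argument, is the technical heart of the proof and is what forces the reflected kernel $\Gamma^*$ (rather than a purely local quantity) to appear on the right-hand side.
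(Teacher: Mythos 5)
The decomposition you set up is sound: replacing $f$ by $\mathbb{P}f$ (which leaves $w$ unchanged, the gradient part being absorbed into the pressure) and using Solonnikov's restricted Green tensor $K^{\mathcal{P}}_{ij}(x,y,t)=\delta_{ij}\bigl(\Gamma(x-y,t)-\Gamma(x-y^*,t)\bigr)+\widetilde K_{ij}(x,y,t)$, with $\widetilde K_{ij}=4(1-\delta_{jn})D_{x_j}\int_0^{x_n}\int_{{\mathbb R}^{n-1}}\Gamma(x-y^*-z,t)\,D_{z_i}N(z)\,dz$, is exactly the right starting point, and you correctly observe that $\widetilde K_{ij}$ is not pointwise dominated by the reflected Gaussian. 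However, the fix you propose for $v_3$ does not go through as stated, and that step is precisely where the whole burden of the lemma lies.

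The claim that $\widetilde K_{ij}$ is a tangential Fourier multiplier acting on $\Gamma^**\mathbb{P}f$ is incorrect. Taking the Fourier transform in $x'$ one finds
\[
\widehat{\widetilde K_{ij}}(\xi',x_n,y_n,t)\;\sim\;\frac{\xi_i\xi_j}{|\xi'|}\,e^{-|\xi'|^2 t}\,\frac{1}{t^{1/2}}\int_0^{x_n}e^{-\frac{(x_n+y_n-z_n)^2}{4t}}\,e^{-|\xi'|z_n}\,dz_n ,
\]
whereas $\widehat{\Gamma^*}(\xi',x_n,y_n,t)\sim t^{-1/2}e^{-|\xi'|^2 t}e^{-(x_n+y_n)^2/4t}$. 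The ratio of the two is \emph{not} a function of $\xi'$ alone: it depends essentially on $x_n$, $y_n$ and $t$ through the $z_n$-integral, which couples the tangential symbol $e^{-|\xi'|z_n}$ to the normal variable. Consequently one cannot write $v_3=T_{x'}\bigl(\Gamma^**\mathbb{P}f\bigr)$ for any multiplier $T_{x'}$ on ${\mathbb R}^{n-1}$, and Mihlin's theorem applied ``uniformly in $x_n$'' does not apply; the step described as ``extension-by-reflection'' is too vague to replace the missing argument. What the correction term really encodes is an Ukai-type composition of a one-sided Poisson/Hilbert-type extension in the normal variable with tangential Riesz transforms, applied to the heat evolution; establishing its $L^p$-boundedness (uniformly in $t$) and matching it against $\Gamma^**\mathbb{P}f$ is the actual content of the cited Lemma 3.3 in the companion paper, and it is not recovered by the multiplier reduction you sketch. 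Until that operator estimate is supplied, the proof has a genuine gap.
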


\begin{prop}\label{prop0803-2}
Let $f = {\rm div} \, F$ with $\rabs{F_{in}}_{x_n =0} =0$ and $g = 0$. Then, for $ 1 < p_0 \leq p< \infty$,
\begin{align*}%\label{apendix2}
 \|\Ga *  {\mathbb P} f(t) \|_{L^p (\R_+)}  +  \|\Ga^* * {\mathbb P } f(t) \|_{L^p (\R_+)}  \leq  c \int_0^t (t-s)^{-\frac12 -\frac{n}2 (\frac1{p_0} -\frac1{p})} \| F(s)\|_{L^{p_0}(\R_+)} ds. %\\
%\|\na \Ga *  {\mathbb P} f(t) \|_{L^p (\R_+)}  +  \|\na \Ga^* * {\mathbb P } f(t) \|_{L^p (\R_+)}  \leq  c \int_0^t (t-s)^{-1 -\frac{n}2 (\frac1{p_0} -\frac1{p})} \| F(s)\|_{L^{p_0}(\R_+)} ds.
\end{align*}
\end{prop}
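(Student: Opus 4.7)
The plan is to move one spatial derivative from $F$ onto the heat kernel by integration by parts and then to estimate the resulting convolution via Young's inequality and the Gaussian bound
\[
\|\nabla\Ga(\tau)\|_{L^r(\R)}\leq c\,\tau^{-\frac12-\frac{n}{2}(1-\frac1r)}.
\]
The Young exponent will be $\frac1r=1-\bke{\frac{1}{p_0}-\frac{1}{p}}$, so that $1-\frac1r=\frac{1}{p_0}-\frac{1}{p}$, which is exactly what converts the Gaussian bound into the claimed singularity $(t-s)^{-\frac12-\frac{n}{2}(\frac{1}{p_0}-\frac{1}{p})}$. I would treat $\Ga*{\mathbb P}f$ and $\Ga^{*}*{\mathbb P}f$ in parallel, since $\Ga^{*}$ is a reflection of $\Ga$ across $\{x_n=0\}$ and its spatial gradient has the same $L^r$-norm on $\hR$ (up to a constant).

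The core step is a duality argument that eliminates the Helmholtz projection. For fixed $t$, I test $\Ga*{\mathbb P}f(\cdot,t)$ against an arbitrary $g\in C_c^\infty(\hR)$, use self-adjointness of ${\mathbb P}$ in $L^2(\hR)$ (extended by density via the $L^{p_0}$-boundedness of ${\mathbb P}$), and obtain
\[
\int_{\hR}g(x)\cdot(\Ga*{\mathbb P}f)(x,t)\,dx \;=\; \int_0^t\!\!\int_{\hR}{\mathbb P}G(y,t-s)\cdot\mbox{div}\,F(y,s)\,dy\,ds,
\]
where $G(y,\tau):=(\Ga(\tau)*g)(y)$ with $g$ extended by zero to $\R$. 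Integration by parts in $y$ transfers the divergence onto ${\mathbb P}G$ and produces a boundary integral on $\{y_n=0\}$ whose integrand is $\sum_i [{\mathbb P}G]_i\,F_{in}|_{y_n=0}$. This is precisely where the hypothesis $F_{in}|_{x_n=0}=0$ enters: the boundary term vanishes, so only the volume piece $-\int_0^t\!\int_{\hR}\nabla{\mathbb P}G:F\,dy\,ds$ survives.

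To finish, I apply H\"{o}lder in $y$ with exponents $(p_0',p_0)$, use $\|\nabla{\mathbb P}G(t-s)\|_{L^{p_0'}(\hR)}\leq c\|\nabla G(t-s)\|_{L^{p_0'}(\hR)}$, and then Young's inequality on $\R$ to get $\|\nabla G(\tau)\|_{L^{p_0'}(\hR)}\leq\|\nabla\Ga(\tau)\|_{L^r(\R)}\|g\|_{L^{p'}(\hR)}$ with $r$ as above. Combined with the Gaussian $L^r$-bound and taking the supremum over $\|g\|_{L^{p'}}=1$, this yields the claim for $\Ga*{\mathbb P}f$. The term $\Ga^{*}*{\mathbb P}f$ is handled by the same argument, noting that on $\{y_n=0\}$ the reflected kernel $\Ga^{*}$ coincides with $\Ga$ (so the IBP again gives a vanishing boundary term) and that $\|\nabla_y\Ga^{*}(\tau)\|_{L^r(\hR)}\leq\|\nabla\Ga(\tau)\|_{L^r(\R)}$ by a direct change of variables in $y_n$.

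The main obstacle I anticipate is justifying the gradient-projection estimate $\|\nabla{\mathbb P}G\|_{L^{p_0'}(\hR)}\leq c\|\nabla G\|_{L^{p_0'}(\hR)}$, because $\nabla$ and ${\mathbb P}$ do not commute on the half-space. This step reduces to an $L^{p_0'}$-Neumann estimate for the pressure $q$ in the decomposition ${\mathbb P}G=G-\nabla q$ (with Neumann data $G_n|_{x_n=0}$), a standard but somewhat delicate fact equivalent to the boundedness of ${\mathbb P}$ on $W^{1,p_0'}(\hR)$.
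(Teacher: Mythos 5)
The paper does not give its own proof of this proposition; it simply defers to Lemma~3.7 of \cite{CJ2}, so there is no internal argument to compare yours against line by line. Taken on its own terms, your duality argument is sound: testing $\Ga*\mathbb{P}f(\cdot,t)$ against $g\in C_c^{\infty}(\overline{\hR})$, moving $\mathbb{P}$ by self-adjointness onto $G(\cdot,t-s)=\Ga_{t-s}*g$, integrating by parts to transfer the divergence onto $\mathbb{P}G$, and observing that the boundary integral carries the factor $F_{in}|_{y_n=0}$ (and, for $i=n$, the extra vanishing $(\mathbb{P}G)_n|_{y_n=0}=0$) so that the hypothesis kills it — this is all correct. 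The exponent bookkeeping also checks out: with $\frac1r=1-(\frac1{p_0}-\frac1p)$, Young's inequality and the scaling $\|\nabla\Ga(\tau)\|_{L^r(\R)}\le c\,\tau^{-\frac12-\frac n2(1-\frac1r)}$ produce exactly $(t-s)^{-\frac12-\frac n2(\frac1{p_0}-\frac1p)}$, and the reflected kernel is handled by a trivial change of variables. It is likely that \cite{CJ2} instead uses the explicit solution-operator or kernel representation of the half-space Stokes semigroup directly; your approach buys self-containedness and sidesteps those kernel estimates at the cost of needing an abstract mapping property of $\mathbb{P}$.

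That mapping property, which you flag as the ``main obstacle,'' is indeed the one genuinely nontrivial ingredient and deserves more than a remark: you need $\|\nabla\mathbb{P}G\|_{L^{p_0'}(\hR)}\le c\,\|\nabla G\|_{L^{p_0'}(\hR)}$, i.e., boundedness of $\mathbb{P}$ on $\dot W^{1,p_0'}(\hR)$. Writing $\mathbb{P}G=G-\nabla q$ with $\Delta q=\mathrm{div}\,G$ in $\hR$ and $\partial_n q=G_n$ on $\{x_n=0\}$, the needed bound $\|\nabla^2 q\|_{L^{p_0'}}\le c\|\nabla G\|_{L^{p_0'}}$ follows from $L^p$ elliptic regularity for the Neumann problem in the half-space together with the trace inequality $\|G_n|_{x_n=0}\|_{\dot B^{1-1/p_0'}_{p_0'p_0'}(\Rn)}\le c\|\nabla G\|_{L^{p_0'}(\hR)}$. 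This is true for all $1<p_0'<\infty$, so your argument closes, but without citing or sketching that half-space Neumann estimate the proof is incomplete. One further small caution: the identity with $f=\mathrm{div}\,F$ and the use of self-adjointness should first be carried out for $F\in C_c^{\infty}(\overline{\hR}\times(0,T))$ with $F_{in}|_{x_n=0}=0$, where $\mathrm{div}\,F\in L^{p_0}$ and all pairings are classical, and then extended by density; as written the pairing $\int\mathbb{P}G\cdot\mathrm{div}\,F$ is only formal when $F$ is merely $L^{p_0}$.
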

\begin{proof}
See the proof of  Lemma 3.7 in \cite{CJ2}.
\end{proof}
%Applying the real interpolation for integral form, we have
%\begin{prop}\label{prop0803}
%
%\begin{align}\label{apendix2}
% \|\na \Ga *  {\mathbb P} \tilde f_1(t) \|_{\dot C^{\al }_r (\R_+)} & \leq c \|\na \Ga *  {\mathbb P} \tilde f_1(t) \|_{\dot B^{\al +\frac{n}r }_r (\R_+)}  %+  \| \na \Ga^* * {\mathbb P } f(t) \|_{\dot B^{\al + \frac{n}r }_r (\R_+)}  %\nonumber\\
% \leq  c \int_0^t (t -s)^{-\frac12 -\frac{\al}{2} -\frac{n}{2r}} \| \tilde f_1(s)\|_{L^r (\R_+)} ds,
%\end{align}
%\end{prop}

With aid of Proposition \ref{prop0803-1} and Proposition \ref{prop0803-2}, we obtain the following proposition.
\begin{prop}\label{proposition1}
Let $1 < p < \infty$.
Let $f = {\rm div} \, F$ with $\rabs{F_{in}}_{x_n =0} =0$ and $g= 0$. Then, for $1 < p < \infty$ the solution $w$ of \eqref{Stokes-bvp-200}-\eqref{Stokes-bvp-210}  satisfies
\begin{align*}
\| w(t)\|_{L^{p} (\R_+)} & \leq c \int_0^t ( t -s)^{-\frac12} \| F (t)\|_{L^p (\R_+)} ds, \qquad 0 < t < \infty.
%& \leq c  \|W \|_{L^\infty (\R_+ \times (0, 1))}\int_0^t ( t -s)^{-\frac12} \| W (s)\|_{L^p (\R_+)}  ds.
\end{align*}
\end{prop}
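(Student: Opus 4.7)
The plan is to obtain the claimed bound as a direct specialization of the two preceding propositions, choosing the exponent parameter optimally. Since Proposition \ref{prop0803-1} reduces the $L^p$ norm of the Stokes solution $w(t)$ (with vanishing boundary value $g=0$) to the sum of the norms of the heat convolutions $\Gamma * \mathbb{P} f$ and $\Gamma^{*} * \mathbb{P} f$, and since Proposition \ref{prop0803-2} controls these two convolutions in terms of the $L^{p_0}$ norm of $F$ via a fractional time-integral, all that is needed is to match the exponents.

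First I would invoke Proposition \ref{prop0803-1} to get
\begin{equation*}
\|w(t)\|_{L^p(\R_+)} \leq c\bigl(\|\Gamma * \mathbb{P} f(t)\|_{L^p(\R_+)} + \|\Gamma^{*} * \mathbb{P} f(t)\|_{L^p(\R_+)}\bigr),
\end{equation*}
which is legitimate under the standing assumption $g=0$. Next I would apply Proposition \ref{prop0803-2} with the hypothesis $f = \mathrm{div}\, F$, $F_{in}|_{x_n=0}=0$, taking the special choice $p_0 = p$. With this choice the spatial scaling exponent collapses, namely $\tfrac{n}{2}\bigl(\tfrac{1}{p_0}-\tfrac{1}{p}\bigr) = 0$, so the kernel reduces to $(t-s)^{-1/2}$ and we obtain
\begin{equation*}
\|\Gamma * \mathbb{P} f(t)\|_{L^p(\R_+)} + \|\Gamma^{*} * \mathbb{P} f(t)\|_{L^p(\R_+)} \leq c\int_0^t (t-s)^{-\frac{1}{2}} \|F(s)\|_{L^p(\R_+)}\, ds.
\end{equation*}
Chaining the two inequalities gives the desired estimate.

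There is essentially no obstacle here beyond verifying that the hypothesis $p_0 = p$ is admissible in the range $1 < p_0 \le p < \infty$ of Proposition \ref{prop0803-2}, which it is (the boundary case of the spatial interpolation). I would close by remarking that the condition $F_{in}|_{x_n=0} = 0$ is precisely what is needed so that the divergence $f = \mathrm{div}\, F$ admits the integration by parts built into the statements of Propositions \ref{prop0803-1} and \ref{prop0803-2}, and no further boundary terms arise.
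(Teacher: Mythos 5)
Your proof is correct and follows exactly the route the paper intends: the paper itself introduces Proposition \ref{proposition1} with the remark that it follows ``with aid of Proposition \ref{prop0803-1} and Proposition \ref{prop0803-2},'' and your chaining of those two propositions with the choice $p_0=p$ is precisely that argument. (You also correctly read $F(s)$ in the integrand, where the paper's displayed statement has an evident typo $F(t)$.)
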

%\begin{proof}
%
%
%
%
%
%In the proof of Lemma 3.3 in \cite{CK}, the authors showed that $w$ defined by
%\eqref{rep-bvp-stokes-w} has the following estimate
%\begin{align}\label{apendix1}
%\| w(t) \|_{L^p (\R_+)} \leq c \big( \|\Ga * {\mathbb P} f(t) \|_{L^p (\R_+)}  +  \|\Ga^* * {\mathbb P} f(t) \|_{L^p (\R_+)} \big),
%\end{align}
%where
%\begin{align*}
%\Ga * {\mathbb P} f(x,t) & = \int_0^t \int_{\R_+} \Ga(x -y, t -s) {\mathbb P}f(y,s) dyds,\\
%\Ga^* *  {\mathbb P}f(x,t)& = \int_0^t \int_{\R_+} \Ga(x' -y', x_n + y_n , t -s) {\mathbb P}f(y,s) dyds.
%\end{align*}
%Let $f = {\rm div} \, F$ with $F_{in}|_{x_n =0} =0$.
%From the proof of  Lemma 3.7 in \cite{CJ2}, we have
%\begin{align}\label{apendix2}
% \|\Ga *  {\mathbb P} f(t) \|_{L^p (\R_+)}  +  \|\Ga^* * {\mathbb P } f(t) \|_{L^p (\R_+)}  \leq  c \int_0^t (t -s)^{-\frac12} \| F(s)\|_{L^p (\R_+)} ds.
%\end{align}
%From \eqref{apendix1} and \eqref{apendix2}, we complete the proof of Proposition \ref{proposition1}.
%\end{proof}
%The proof of Proposition \ref{proposition1} is given in Appendix \ref{appendix0131-1}.
%\footnote{KK: \color{red} We may just refer to the way of proof for Lemma 3.4 in [4]. We need to specify the range of $p$, i.e. $1\le p\le infty$.}

Some estimates of  solutions for \eqref{Stokes-bvp-200} are reminded as well.

\begin{prop}\label{theo0503}(\cite[Proposition 2.3]{CKca})
Let $f = {\rm div} \, F$ with $F \in L^q (0,\infty; L^p (\R_+))$, $\rabs{F}_{x_n =0} \in L^q (0, \infty; \dot B^{-\frac1p}_{pp} (\Rn))$ and  $g =0$. Then, for $1< p, q< \infty$ the solution $w$ of \eqref{Stokes-bvp-200}-\eqref{Stokes-bvp-210}  satisfies
\begin{align}\label{0504-1}
\|  \na w\|_{L^q (0, \infty;   L^p (\R_+))} \leq c \bke{\| F\|_{L^{q} (0,\infty;  L^p (\R_+))} +  \| \rabs{F}_{x_n =0}\|_{L^q (0, \infty; \dot B^{-\frac1p}_{pp}(\Rn))}}.
\end{align}
\end{prop}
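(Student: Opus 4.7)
The plan is to produce the counterexample by an explicit construction based on the half-space Stokes Poisson kernel reviewed in Section \ref{SS-half}, and then pass to Navier–Stokes by a perturbation argument. The boundary data will be chosen on $\Rn\times(0,\infty)$ in such a way that it vanishes on $\Sigma=(B_2\cap\{x_n=0\})\times(0,4)$, so that the no-slip condition \eqref{bddata-20} is automatic, and the interior singularity in $Q_{\frac12}^+$ then comes entirely through non-local propagation.

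First I would choose a boundary datum $g(y',s)$ on $\Rn\times(0,\infty)$ that vanishes on $B_1\cap\{x_n=0\}$ and is concentrated near a point outside $B_1$ from which the parabolic distance to $(0,0)$ is comparable to $1$; it will be built as a superposition of anisotropically rescaled bumps (using the functions $\phi_i$ of \eqref{psi1}) with carefully tuned amplitudes and scales, or equivalently as a fixed profile homogeneous of the appropriate parabolic degree, designed so that the resulting Poisson integral $u$, together with the corresponding pressure $\pi$ defined by the half-space Stokes formula, has $\nabla u,\pi\in L^q(Q_1^+)$ and their second-order parabolic derivatives sit exactly at the thresholds in \eqref{0711-1}. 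Proposition \ref{proposition1} and Proposition \ref{theo0503} will be the main tools for the upper bounds: applied to the Poisson integral of each building block and summed, they give $\nabla\pi,D_tu\in L^q(Q_1^+)$ and $\nabla^2 u\in L^{3q/2}(Q_1^+)$, with the gain from $L^q$ to $L^{3q/2}$ for $\nabla^2 u$ coming from a spatial Sobolev embedding available for spatial derivatives of $u$ but not for $D_t u$ or $\nabla\pi$.

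Next, for the negative statement \eqref{0711-2}, I would evaluate the kernel representation at a sequence of points in $Q_{\frac12}^+$ approaching the accumulation point of the boundary data. Because the Stokes Poisson kernel is explicit and positivity/lower-bound properties of its leading term are available from Section \ref{SS-half}, one obtains matching lower bounds on $|D_tu|$, $|\nabla\pi|$ and $|\nabla^2 u|$ of the same homogeneity as the upper bounds; a parabolic integration then yields the divergence of the $L^{r_1}$ and $L^{r_2}$ norms whenever $r_1>q$ or $r_2>\frac{3q}{2}$, as required.

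Finally, for the Navier–Stokes extension I would seek a solution of the form $u=u_S+v$, where $u_S$ is the Stokes solution just constructed and $v$ solves a Stokes problem with forcing $-\nabla\cdot\bigl((u_S+v)\otimes(u_S+v)\bigr)$ and zero boundary and initial data. Since $u_S$ satisfies a good pointwise bound away from the accumulation point and has $\nabla u_S\in L^q$, the tensor $u_S\otimes u_S$ lies in a divergence-form class to which Proposition \ref{theo0503} applies; a contraction-mapping argument in the corresponding Banach space produces $v$ with strictly better regularity than $u_S$, so $u$ inherits \eqref{0711-1}–\eqref{0711-2} from $u_S$. The main obstacle will be the very last calibration of scales in the construction of $g$: the asymmetry between the $L^{3q/2}$ integrability of $\nabla^2 u$ and the $L^q$ integrability of $D_tu$ and $\nabla\pi$ means the boundary profile has to be chosen so that the extra spatial-Sobolev room is precisely occupied by $\nabla^2u$ while the non-local pressure contribution saturates $L^q$ from above and below simultaneously — this matching, and verifying that no cancellation spoils the sharp lower bound, is the technical heart of the proof.
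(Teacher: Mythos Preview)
Your proposal does not address the stated proposition at all. Proposition \ref{theo0503} is an \emph{a priori estimate}: given divergence-form forcing $f=\mathrm{div}\,F$ with $F\in L^q_tL^p_x$ and a Besov trace condition on $F|_{x_n=0}$, it asserts the upper bound $\|\nabla w\|_{L^q_tL^p_x}\le c(\|F\|+\|F|_{x_n=0}\|)$ for the half-space Stokes solution with zero boundary data. In the paper this proposition carries no proof; it is quoted directly from \cite[Proposition 2.3]{CKca}, and its role here is purely as a tool.

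What you have written is instead a sketch of the proof of Theorem \ref{maintheorempressure-SS}: you speak of constructing a boundary datum $g$ supported away from $\Sigma$, establishing \eqref{0711-1}--\eqref{0711-2}, and perturbing to Navier--Stokes. None of this is relevant to Proposition \ref{theo0503}, which has $g=0$, nonzero $f$, and is an inequality to be \emph{proved}, not a counterexample to be \emph{constructed}. Even setting aside the mismatch, your plan to ``apply Proposition \ref{proposition1} and Proposition \ref{theo0503}'' to obtain the upper bounds would be circular if the goal were to prove Proposition \ref{theo0503} itself. You have simply addressed the wrong statement.
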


\begin{prop}
\label{thm-stokes}\cite[Theorem 1.2]{CJ}
Let $g =0$ and $f=\mbox{\rm div } \, F$ with $F\in L^q(\R\times{\mathbb R}_+), \, 1<q<\infty $. Then   there is a unique very weak solution $w\in  L^q({\mathbb R}^n_+\times (0,T) ) $ of the Stokes equations
\eqref{Stokes-bvp-200}-\eqref{Stokes-bvp-210}
satisfying   the following inequality
\begin{align*}
\| w\|_{L^{q}({\mathbb R}^n_+\times (0,T))}
 &\leq cT^{\frac{1}{2}}\|F\|_{L^q(\R \times (0,T))}.
\end{align*}
\end{prop}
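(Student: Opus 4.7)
The plan is to reduce the statement to the pointwise-in-time estimate of Proposition \ref{proposition1} and then integrate in time using Young's convolution inequality, with existence obtained via approximation and uniqueness via duality.

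First I will derive the a priori estimate. Given $F\in L^q(\mathbb{R}^n_+\times(0,T))$, approximate $F$ by a sequence $F_k\in C^\infty_c(\mathbb{R}^n_+\times(0,T))$ with $F_k\to F$ in $L^q$. Since $\mathrm{supp}\,F_k\Subset\mathbb{R}^n_+$, the boundary condition $F_{k,in}|_{x_n=0}=0$ is trivially met, so Proposition \ref{proposition1} (taken with $p=q$) produces a very weak solution $w_k$ with
\[
\|w_k(t)\|_{L^q(\mathbb{R}^n_+)}\leq c\int_0^t (t-s)^{-1/2}\|F_k(s)\|_{L^q(\mathbb{R}^n_+)}\,ds,\qquad 0<t<T.
\]
Writing $g_k(t):=\|F_k(t)\|_{L^q(\mathbb{R}^n_+)}$, the right-hand side is the convolution of the $L^1(0,T)$-kernel $K(t)=t^{-1/2}\chi_{(0,T)}(t)$, satisfying $\|K\|_{L^1(0,T)}=2\sqrt{T}$, with $g_k\in L^q(0,T)$. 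Young's convolution inequality then yields
\[
\|w_k\|_{L^q(\mathbb{R}^n_+\times(0,T))}\leq cT^{1/2}\|F_k\|_{L^q(\mathbb{R}^n_+\times(0,T))}.
\]

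Next, by linearity of \eqref{Stokes-bvp-200}--\eqref{Stokes-bvp-210} and the estimate just obtained, the sequence $(w_k)$ is Cauchy in $L^q(\mathbb{R}^n_+\times(0,T))$; let $w$ denote its limit. Every term in the very weak identity of Definition \ref{stokesdefinition} (with $g=0$) depends linearly and continuously on the $L^q$-norms of $w_k$ and $F_k$: the $f$-term is handled by integrating by parts, $\int f_k\cdot\Phi\,dxdt=-\int F_k:\nabla\Phi\,dxdt$, which produces no boundary contribution because $\Phi|_{x_n=0}=0$. Passing to the limit $k\to\infty$ shows that $w$ is a very weak solution with data $f=\mathrm{div}\,F$, and the estimate \eqref{Stokes-bvp-2200} is preserved. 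The bound $\|w\|_{L^q}\leq cT^{1/2}\|F\|_{L^q}$ transfers from $w_k$ to $w$ by lower semicontinuity.

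For uniqueness, suppose $w\in L^q(\mathbb{R}^n_+\times(0,T))$ is a very weak solution with $F=0$ and $g=0$. Given an arbitrary test vector field $\varphi\in C^\infty_c(\mathbb{R}^n_+\times(0,T))$, I solve the backward adjoint Stokes system $-\Phi_t-\Delta\Phi+\nabla p=\mathbb{P}\varphi$, $\mathrm{div}\,\Phi=0$ in $\mathbb{R}^n_+\times(0,T)$, with $\Phi|_{x_n=0}=0$ and $\Phi(T)=0$. Standard half-space Stokes maximal regularity (applied to the time-reversed problem and dualized to $L^{q'}$) produces a $\Phi$ with the regularity required to serve, after a standard cut-off/approximation, as an admissible test function in Definition \ref{stokesdefinition}. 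Substituting this $\Phi$ into the very weak formulation forces $\int_0^T\!\!\int_{\mathbb{R}^n_+}w\cdot\mathbb{P}\varphi\,dxdt=0$ for every $\varphi\in C^\infty_c$; together with the solenoidal condition \eqref{Stokes-bvp-2200} this yields $w=0$.

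The main technical obstacle is justifying both the approximation in Step 1 (since the direct hypothesis of Proposition \ref{proposition1} requires the trace condition $F_{in}|_{x_n=0}=0$, which does not hold for a generic $F\in L^q$) and the uniqueness step, which needs the adjoint problem to admit a solution regular enough to be inserted into the test class of Definition \ref{stokesdefinition}. Both are routine in spirit but require care: the first is handled by the density of $C^\infty_c$ and continuity of each term of the identity under the $L^q$ topology, and the second by invoking half-space Stokes maximal regularity together with a standard truncation argument in time to make $\Phi$ compactly supported.
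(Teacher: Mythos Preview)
The paper does not supply its own proof of this proposition; it is quoted verbatim from \cite[Theorem 1.2]{CJ} and used as a black box. Your argument is therefore not competing with anything in the paper itself, but rather offering a self-contained derivation from the other quoted results (Propositions \ref{prop0803-1}--\ref{proposition1}).

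Your route is correct in outline and is the natural one given the tools assembled in Section~\ref{prelim}: the pointwise-in-time bound of Proposition~\ref{proposition1} combined with Young's inequality in $t$ immediately gives the a~priori estimate, and density of $C^\infty_c(\mathbb{R}^n_+\times(0,T))$ in $L^q$ lets you drop the trace hypothesis $F_{in}|_{x_n=0}=0$ needed for Proposition~\ref{proposition1}. Two small points deserve a word of caution. First, Proposition~\ref{proposition1} is stated only as an estimate, not as an existence statement; for smooth compactly supported $F_k$ you are implicitly invoking the explicit Green-tensor representation behind Propositions~\ref{prop0803-1}--\ref{prop0803-2} to produce $w_k$, which is fine but should be said. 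Second, in the uniqueness step the adjoint solution $\Phi$ produced by maximal regularity lies in $W^{2,1}_{q'}$ but is not compactly supported in $\overline{\mathbb{R}^n_+}\times[0,\infty)$, so it is not literally in the test class of Definition~\ref{stokesdefinition}; you flag this and appeal to a truncation, which is standard, but the cut-off must be done so that the divergence-free constraint on $\Phi$ is preserved (e.g.\ cut off only in $t$, or use a Bogovski\u{\i} correction). With those two clarifications your argument is complete.
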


%In \cite{GGS}, Giga etal. showed \eqref{0504-1} with additional condition  ${\rm div}\, F =0$ and $F_{in}|_{x_n =0}=0, \, i =1, \cdots , n$. In \cite{KS}, it was shown  that  the second term of the right-hand side in \eqref{0504-1} is dropped with condition $p =q$.

%\footnote{KK: \color{red}The above sentence can be shifted after Proposition 2.4.}

\begin{prop}\label{theoexternel-boundary}\cite[Theorem 2.2]{CKsima}
%Let $\Om $ be a bounded domain with $C^{2,\al}$, $0<\al\leq 1$
%boundaries in $\R$, $n \geq 2$.
Let $ n +2 <  p < \infty $. Let $ f= {\rm div} \, F$ with  $ F
\in L^p (\R_+ \times (0, T)) $ and $ g \equiv 0$.   Then, there exists unique  very weak solution, $w \in L^\infty (\R_+ \times (0, T))$, of the Stokes equations
\eqref{Stokes-bvp-200} such that
\begin{equation*}%\label{externel-boundary}
\| w\|_{L^\infty (\R_+ \times (0, T))} \leq c T^{\frac12(1-\frac{n+2}{p} )} \max(1, T^{ -\frac{\al_1}2}) \|  F  \|_{ L^p (\R_+ \times (0, T))},
\end{equation*}
where  $0< \al_1 \le 1 -\frac{n+2}p$.
\end{prop}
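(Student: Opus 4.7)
The plan is to exhibit a single explicit construction of the Stokes solution, built as a half-space Stokes Poisson integral of a carefully tuned boundary datum supported outside $B_1$, whose finite-norm and blow-up thresholds match the stated exponents; the Navier-Stokes version will then be obtained by a perturbation/fixed-point argument.

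First I would take a boundary datum of product form $g(x',t)=\eta(x')\psi(t)$, where $\eta\in C^\infty_c(\R^{n-1})$ is supported near a point $x_0'$ with $|x_0'|$ bounded away from $1$ from above (say $|x_0'|\ge 3/2$), and $\psi$ is a cusp-type profile concentrated near a time $t_0\in(0,1)$ of the form $\psi(t)=\chi(t)|t-t_0|^{\gamma}$ with a carefully chosen exponent $\gamma$. Supporting $g$ outside $B_1\cap\{x_n=0\}$ forces the local boundary condition \eqref{bddata-20} for free inside $B_1^+\times(0,4)$, while the entire contribution to $(u,\pi)$ inside $B_1^+$ comes from the non-local part of the Stokes Poisson kernel. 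Using the explicit Poisson kernel representation recalled in Section \ref{SS-half}, the induced solution is smooth in the tangential variable $x'$ on $B_1^+$ and inherits both the normal-direction and time singularities of $\psi$ near the boundary space-time point $(x_0',0,t_0)$, which lies in $\overline{\Sigma}\cap\overline{Q_{1/2}^+}$ for appropriately placed $x_0'$ and $t_0$.

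The core computation is to extract the pointwise asymptotics of $\nabla^2 u$, $D_t u$ and $\nabla\pi$ in terms of the parabolic distance to $(x_0',0,t_0)$ and to select $\gamma$ so that all three borderline scalings are simultaneously saturated. The Stokes half-space Poisson kernel splits into a heat-kernel-type piece and a harmonic-extension piece, and the harmonic-extension piece carries an extra factor of $x_n/r$ (where $r$ denotes parabolic distance), which improves the spatial decay of $\nabla^2 u$ relative to $D_t u$ and $\nabla\pi$ by exactly one half-power. After integrating $r^{-\alpha q}$ against the parabolic measure $r^{n+1}\,dr$ in $\hR\times\R$, this half-power gain converts into the ratio $3/2$ between the critical exponents for $\nabla^2 u$ (giving $L^{3q/2}$) and for $D_t u,\nabla\pi$ (giving $L^q$). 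The blow-up statement \eqref{0711-2} follows because any strictly larger exponent makes the integral over any neighborhood of $(x_0',0,t_0)\subset\overline{Q_{1/2}^+}$ divergent.

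For the Navier-Stokes case, I would write $u=v+w$, where $v$ is the Stokes solution just constructed and $w$ solves the inhomogeneous Stokes system with zero boundary data and forcing $f=-\mathrm{div}((v+w)\otimes(v+w))$. A short-time fixed-point argument using Proposition \ref{theo0503}, Proposition \ref{proposition1}, Proposition \ref{theoexternel-boundary} and Proposition \ref{prop0803-1}--\ref{prop0803-2} produces such a $w$; because $v\otimes v$ has quadratically better integrability than the linear singular pieces of $v$ (and similarly for the cross and quadratic $w$ terms at sufficiently small time), the resulting $w$ is strictly more regular than $v$ and affects neither the upper bound \eqref{0711-1} nor the blow-ups \eqref{0711-2}.

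The main obstacle will be the simultaneous matching of all three borderline exponents by a single scalar parameter $\gamma$: the construction must saturate the $L^{3q/2}$ threshold for $\nabla^2 u$ and the $L^q$ threshold for both $D_t u$ and $\nabla \pi$ without over- or undershooting any of them. This forces a careful case analysis of how each of $\nabla^2 u$, $D_t u$ and $\nabla\pi$ extracts decay from the harmonic-extension part of the Stokes Poisson kernel, and a separate verification that the harmonic pressure contribution does not accidentally lie in a higher $L^{r_1}$.
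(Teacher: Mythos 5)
Your proposal does not address the statement you were asked to prove. Proposition \ref{theoexternel-boundary} is a well-posedness/a priori estimate: for the Stokes system \eqref{Stokes-bvp-200}--\eqref{Stokes-bvp-210} in the half-space with \emph{zero} boundary data and forcing $f=\mathrm{div}\,F$, $F\in L^p(\R_+\times(0,T))$ with $p>n+2$, one must produce a unique very weak solution $w$ and prove the maximum modulus bound $\| w\|_{L^\infty} \leq c\, T^{\frac12(1-\frac{n+2}{p})}\max(1,T^{-\al_1/2})\|F\|_{L^p}$. What you have sketched instead is the construction of a \emph{specific singular example} driven by a nonzero boundary datum $g$ supported outside $B_1$, together with an analysis of the blow-up rates of $\nabla^2u$, $D_tu$ and $\nabla\pi$ and a perturbation argument for Navier--Stokes; that is the content of Theorem \ref{maintheorempressure-SS} (and its exponents $L^{3q/2}$ versus $L^q$), not of Proposition \ref{theoexternel-boundary}. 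Indeed, your own outline invokes Proposition \ref{theoexternel-boundary} as one of the black-box tools in the fixed-point step, so it cannot simultaneously serve as a proof of it. Nothing in the proposal establishes existence, uniqueness, or any $L^\infty$ control of $w$ in terms of $\|F\|_{L^p}$, nor does it explain the role of the threshold $p>n+2$ or the origin of the power $T^{\frac12(1-\frac{n+2}{p})}$.

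For the record, the statement is quoted from \cite[Theorem 2.2]{CKsima} and is not reproved in this paper. A direct argument runs along the following lines: represent $w$ via the solution formula for the half-space Stokes system (e.g.\ the restricted Green tensor $K^{\mathcal P}$ of \eqref{0909-1}, or the kernels of Proposition \ref{prop0803-1}), integrate by parts to transfer one derivative from $F$ onto the kernel, and estimate pointwise by H\"older's inequality in $(y,s)$. The gradient of the kernel obeys a bound of the type $c\,(|x-y^*|^2+t-s)^{-\frac{n+1}{2}}$ (up to the nonlocal correction responsible for the $\max(1,T^{-\al_1/2})$ factor), and its $L^{p'}$ norm over $\R_+\times(0,T)$ is finite exactly when $p>n+2$, producing the factor $T^{\frac12(1-\frac{n+2}{p})}$. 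Uniqueness follows in the class of very weak solutions by a duality argument. None of these steps appear in your proposal, so as a proof of Proposition \ref{theoexternel-boundary} it has a complete gap.
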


\begin{prop}\label{theoexternel-boundary-2}
Let $\frac{n +2}2 <  p < \infty $. Let $ f \in L^p (\R_+ \times (0, T)) $ and $ g = 0$.   Then, there exists unique very weak solution, $w \in L^\infty (\R_+ \times (0, T))$, of the Stokes equations
\eqref{Stokes-bvp-200} such that
\begin{equation*}%\label{externel-boundary-2}
\| w\|_{L^\infty (\R_+ \times (0, T))} \leq c T^{1-\frac{n+2}{2p} } \|  f\|_{L^p (\R_+ \times (0, T))}.
\end{equation*}
\end{prop}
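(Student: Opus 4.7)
The plan is to construct $w$ by convolving the Helmholtz projection of $f$ against the half-space heat kernel, then to extract the $L^\infty$ bound via Young's inequality in space and H\"older's inequality in time; the hypothesis $p > (n+2)/2$ will be exactly what renders the resulting time integral finite.

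First, I would invoke the representation underlying Proposition \ref{prop0803-1}, which realizes $w$ as
\[
w(x,t) = \int_0^t \bke{\Ga(t-s) * {\mathbb P} f(s) + \Ga^*(t-s) * {\mathbb P} f(s)}(x)\, ds,
\]
where $\Ga^*$ denotes the reflected heat kernel from that proposition. Since the Helmholtz projection ${\mathbb P}$ is bounded on $L^p(\R_+)$ for $1<p<\infty$, we have $\|{\mathbb P} f(s)\|_{L^p(\R_+)} \leq c\|f(s)\|_{L^p(\R_+)}$. For each fixed $s$, Young's convolution inequality with exponents $p$, $p'$, $\infty$ (so $\tfrac{1}{p}+\tfrac{1}{p'}=1$), combined with the standard bound $\|\Ga(t-s)\|_{L^{p'}({\mathbb R}^n)} \leq c(t-s)^{-\frac{n}{2p}}$ and the same estimate for $\Ga^*$, yields
\[
\| \Ga(t-s) * {\mathbb P} f(s) \|_{L^\infty(\R_+)} + \| \Ga^*(t-s) * {\mathbb P} f(s) \|_{L^\infty(\R_+)} \leq c(t-s)^{-\frac{n}{2p}} \|f(s)\|_{L^p(\R_+)}.
\]

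Integrating in time and applying H\"older with exponents $p$ and $p'$ then gives, for any $x \in \R_+$ and $0<t\le T$,
\[
|w(x,t)| \leq c\bke{\int_0^t (t-s)^{-\frac{np'}{2p}}\, ds}^{1/p'} \|f\|_{L^p(\R_+ \times (0,T))}.
\]
The hypothesis $p>(n+2)/2$ is equivalent to $\frac{np'}{2p}<1$, so the time integral converges, and a direct computation of the exponent gives $\frac{1}{p'}\bke{1-\frac{np'}{2p}} = 1-\frac{n+2}{2p}$, producing the factor $c\,t^{1-\frac{n+2}{2p}}$. Taking the supremum over $0<t<T$ yields the claimed $L^\infty$ estimate.

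The main technical point is that the representation behind Proposition \ref{prop0803-1} delivers the correct half-space convolution structure, including the reflection term $\Ga^*$ that enforces $w|_{x_n=0}=0$; once that structure is in hand, the $L^\infty$ bound is entirely classical. For uniqueness, the difference of two $L^\infty(\R_+ \times (0,T))$ very weak solutions solves the homogeneous problem with $f=0$ and $g=0$, and one may appeal to Proposition \ref{thm-stokes} (with $F\equiv 0$), or alternatively to a duality argument against the adjoint Stokes problem, to conclude that the difference vanishes.
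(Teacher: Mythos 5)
The paper deliberately omits this proof, stating only that it is ``rather straightforward,'' so you have filled in a gap rather than reproduced an argument, and your construction is essentially the right one: kernel $\times$ Young in space $\times$ H\"older in time, with the arithmetic $\frac{1}{p'} - \frac{n}{2p} = 1 - \frac{n+2}{2p}$ correctly recovering the stated power of $T$ and the hypothesis $p > \frac{n+2}{2}$ correctly characterizing convergence of the singular time integral.

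Two small points of precision. First, Proposition \ref{prop0803-1} as stated is an $L^p$-in-space \emph{estimate}, not a pointwise representation; citing ``the representation underlying it'' is a bit loose. The clean way to make your argument airtight, using only what the paper displays, is to invoke the Green-tensor representation $w(x,t)=\int_0^t\int_{\R_+}K^{\mathcal P}(x,y,t-s)\,{\mathbb P}f(y,s)\,dyds$ from \eqref{0909-1} and the pointwise kernel bound \eqref{0801-1} with $k=|l|=0$, namely $|K^{\mathcal P}(x,y,t)|\le c\,e^{-c_1 y_n^2/t}\,(|x-y^*|^2+t)^{-n/2}$. Its $L^{p'}_y(\R_+)$ norm is $\le c\,t^{-n/(2p)}$ exactly as for your $\Ga$ and $\Ga^*$, so all of your subsequent computations go through verbatim. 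Second, on uniqueness: Proposition \ref{thm-stokes} asserts uniqueness in the class $L^q(\R_+\times(0,T))$ for finite $q$, but a bounded solution on the unbounded domain $\R_+\times(0,T)$ need not lie in any global $L^q$. So ``appeal to Proposition \ref{thm-stokes} with $F\equiv 0$'' does not directly apply; your alternative, a duality argument testing the difference against the adjoint Stokes problem with compactly supported data, is the correct route and should be the one retained.

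Neither issue affects the substance of the estimate; the quantitative part of your proof is correct and is what the authors had in mind when they declared the proof straightforward.
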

The proof of Proposition \ref{theoexternel-boundary-2} is rather straightforward, and thus we skip its details.

%We provide the proof of  Proposition \ref{theoexternel-boundary-2} in Appendix \ref{appendix0909}.%{\color{red}$\bullet$}\footnote{it may be better to skip proving the proposition.}
%\begin{prop}
%Let $1 < p < \infty$ and $\al > 1 +\frac1p$.
%Let ${\mathcal F} \in  L^q (0, \infty; H^\al_p(\R_+ ))$. Then, $p \in L^q (0, \infty; \dot H^{\al -1}_p (\R_+))$ with
%\begin{align*}
%\|p\|_{L^q (0, \infty; \dot H^{\al -1}_p (\R_+))} \| \leq  \| {\mathcal F}\|_{L^q (0, \infty; H^\al_p(\R_+ ))}.
%\end{align*}
%
%\end{prop}

We similalrly consider the
Navier-Stokes equations in a half-space, namely
\begin{equation}\label{NSE-bvp-230}
u_t - \De u + \na p =-\mbox{div}(u\otimes u), \qquad \mbox{div } u
=0 \qquad\mbox{ in } \quad  Q^+: = \R_+ \times (0, \infty)
\end{equation}
with  zero initial data and  non-zero boundary values
\begin{equation}\label{NSE-bvp-240}
\rabs{u}_{t=0}= 0, \qquad  \rabs{u}_{x_n =0} = g.
\end{equation}
We mean by  very weak solutions of the Navier-Stokes equations
\eqref{NSE-bvp-230}-\eqref{NSE-bvp-240} distribution solutions, which are defined as follows:

\begin{defin}\label{NS-vweak}
Let $g \in L^1_{\rm{loc}}(\pa \hR\times (0, \infty))$. A vector field
$u\in L^2_{loc}( \hR \times (0, \infty))$ is called a very weak solution
of the non-stationary Navier-Stokes equations
\eqref{NSE-bvp-230}-\eqref{NSE-bvp-240}, if the following equality
is  satisfied:
\begin{align*}%\label{NSE-bvp-250}
-\int^\infty_0\int_{\hR}u\cdot \Delta \Phi dxdt=\int^\infty_0\int_{\hR}(u\cdot
\Phi_t+(u\otimes u):\nabla \Phi) dxdt -\int^\infty_0 \int_{\pa \hR} g
\cdot D_{x_n} \Phi dx' dt
\end{align*}
for each $\Phi\in C^2_c(\overline{\hR}\times [0,\infty))$ satisfying \eqref{0528-1}.
In addition, for each $\Psi\in C^1_c(\overline{\hR})$ with
 $\Psi \big|_{\pa \hR }=0$,  $u$ satisfies \eqref{Stokes-bvp-2200}.
\end{defin}

%%%%%%%%%%%%%%%%%%%%%%%%%%%%%%%%%%%%%%%%%%%%%%%%%
%%%%%%%%%%%%%%%%%%%%%%%%%%%%%%%%%%%%%%%%%%%%%%%%%
%%%%%%%%%%%%%%%%%%%%%%%%%%%%%%%%%%%%%%%%%%%%%%%%%

\section{Stokes equations with boundary data in a half-space}
\label{SS-half}
\setcounter{equation}{0}

%{\color{magenta} In this section we provide the proof of Theorem \ref{maintheorem-SS} for Stokes equations.}

For convenience, we introduce a tensor $L_{ij}$ defined by
\begin{align}\label{L-tensor}
L_{ij} (x,t) & =  D_{x_j}\int_0^{x_n} \int_{\Rn}   D_{z_n}
\Ga(z,t) D_{x_i}   N(x-z)  dz,\quad
i,j=1,2,\cdots,n.%\\
%\notag A(x,t) & =\int_{\Rn}\Ga(z^{\prime},0,t)N(x^{\prime}-z^{\prime},x_n)dz^{\prime}.
\end{align}
We recall the following relations on $L_{ij}$  (see \cite{So}):
\begin{align} \label{1006-3}
\sum_{i=1}^{n} L_{ii} = \frac12 D_{x_n} \Ga, \quad\qquad  L_{in} =
L_{ni}  - B_{in} \,\, \mbox{ if }\, i \neq n,
\end{align}
where
\begin{equation}\label{B-tensor}
B_{in}(x,t) = \int_{\Rn}D_{x_n} \Ga(x^\prime -z^\prime ,
x_n, t) D_{z_i} N( z^{\prime},0) dz^\prime.
\end{equation}
Furthermore, we remind an estimate of $L_{ij}$ defined in
\eqref{L-tensor} (see \cite{So})
\begin{equation}\label{est-L-tensor}
|D^{l_0}_{x_n} D^{k_0}_{x'} D_{t}^{m_0} L_{ij}(x,t)| \leq
\frac{c}{t^{m_0 + \frac12} (|x|^2 +t )^{\frac12 n + \frac12 k_0}
(x_n^2 +t)^{\frac12 l_0}},
\end{equation}
where $ 1 \leq  i \leq n$ and $1 \leq j \leq n-1$.

It is known that the Poisson kernel $K $ of the Stokes equations is  given as follows   (see \cite{So}):
\begin{align}\label{Poisson-tensor-K}
\notag K_{ij}(x'-y',x_n,t) &  =  -2 \delta_{ij} D_{x_n}\Ga(x'-y', x_n,t)
+4L_{ij} (x'-y',x_n,t)\\
& \qquad +2  \de_{jn} \de(t)  D_{x_i} N(x'-y',x_n),
\end{align}
where $\delta(t)$ is the Dirac delta function and $\delta_{ij}$ is
the Kronecker delta function, and
thus, the solution $w$ of the Stokes
equations \eqref{Stokes-bvp-200}-\eqref{Stokes-bvp-210} with $ f =0$ is expressed by
\begin{align}\label{rep-bvp-stokes-w}
w_i(x,t) = \sum_{j=1}^{n}\int_0^t \int_{\Rn} K_{ij}(
x^{\prime}-y^{\prime},x_n,t-s)g_j(y^{\prime},s) dy^{\prime}ds.
\end{align}

Next, we will construct a solution $w$ of Stokes equations via
\eqref{rep-bvp-stokes-w}  for a certain $g$ such that $ w \in L^\infty$ but
$L^p$-norm of $\nabla w$ is not bounded near boundary.
 For convenience, we denote
\[
A = \{ y'=(y_1, y_2, \cdots, y_{n-1})\in \Rn \, | \, 3 < |y'| < 4\sqrt{n}, \,  -4\sqrt{n} < y_i < -3, \, \, 1 \leq i \leq n-1 \}.
\]
We introduce
a non-zero boundary data $g:\mathbb R^{n-1}\times \mathbb
R_+\rightarrow  \R$ with only $n-$th component defined
as follows:
\begin{align}\label{0502-6}
g(y', s) = (0, \cdots, 0,  g_n (y',s))= (0, \cdots, 0,a g^{\calS}_n (y')g^{\calT}_n(s)),
\end{align}
where $g_n^{\mathcal S} \geq 0$ and $g_n^{\mathcal T} \geq 0$ satisfy
%{\footnote{KK:{\color{blue} Notation convension: how about $g^1_n$ and $g^2_n$ as $g^s_n$ and $g^t_n$? }}}
\begin{align}\label{boundarydata}
g^{\mathcal S}_n \in C^\infty_c (A), \qquad
 {\rm supp} \,\, g_n^{\mathcal T} \subset (\frac{3}{4},\frac78), \quad   g^{\mathcal T}_n   \in L^\infty({\mathbb R}).
\end{align}
Here $a> 0$ is a constant, which is specified later. In this section, we assume $a=1$, without loss of generality (in section \ref{proof-ns}, the parameter $a$ will be taken sufficiently small).

In the next lemma, we estimate spatial derivatives of the convolution of $L_{ni}$ and $g_n$ up to the second order. For convenience, we denote
\begin{equation}\label{wi-Lgn}
w^{{\mathcal L}}_{i}(x,t) = \int_0^t \int_{\Rn} L_{ni}(
x^{\prime}-y^{\prime},x_n,t-s)g_n(y^{\prime},s) dy^{\prime}ds, \qquad 1 \leq i \leq n.
\end{equation}
\begin{lemm}\label{lemm0706-1}
%\footnote{KK: \color{red}$g_n$ must be specified in Lemma 3.1.}
Let $g_n$ be given in \eqref{0502-6} and \eqref{boundarydata}, and $w^{{\mathcal L}}_{i}$ defined in \eqref{wi-Lgn}.
%\begin{align*}
%w^{{\mathcal L}}_{i}(x,t) = \int_0^t \int_{\Rn} L_{ni}(
%x^{\prime}-y^{\prime},x_n,t-s)g_n(y^{\prime},s) dy^{\prime}ds \quad 1 \leq i \leq n.
%\end{align*}
Then, for $|x'| < 2$ and $ t > 0$, $w^{{\mathcal L}}_{i}$ satisfies the following estimates.
\begin{align}
\label{0706-6}|& w^{{\mathcal L}}_{i}(x,t) | \leq  c \| g\|_{L^\infty}, \qquad 1 \leq i \leq n,\\
\label{0706-7}& |\na w^{{\mathcal L}}_{i}(x,t) | \leq  c \| g\|_{L^\infty}, \qquad 1 \leq i \leq n-1,\\
\label{0706-7-1}& |\na w^{{\mathcal L}}_{n}(x,t) | \leq  c ( 1 +  (\ln \frac{x_n^2}t)_+ )\| g\|_{L^\infty}, \\
\label{0706-8} & |\na^2 w^{{\mathcal L}}_{i}(x,t) | \leq c ( 1 + (\ln \frac{x_n^2}t)_+ )\| g\|_{L^\infty}, \qquad 1 \leq i \leq n-1.
%\label{0706-8-1}& \|\na^2 w_n^{\mathcal L}\|_{L^q(0, \infty, L^{r}(\R_+))}
%\leq c  \| g^{\mathcal T}_n\|_{L^{\infty}} \| D_{x'} g^{\mathcal S}_n\|_{L^{\infty}}, \qquad \frac1q +1 >   1 -\frac1{2r}.
\end{align}

\end{lemm}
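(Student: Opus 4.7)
My plan is to prove all four inequalities from the single integral formula \eqref{wi-Lgn}, controlling $L_{ni}$ by the pointwise bound \eqref{est-L-tensor} and the decomposition \eqref{1006-3}. Two structural observations underpin the whole argument. First, since $\operatorname{supp} g_n^{\mathcal S}\subset A\subset\{|y'|\ge 3\}$ and $|x'|<2$, we have $|x'-y'|\ge 1$ throughout the integration, so the factor $(|x'-y'|^2+x_n^2+(t-s))^{n/2+k_0/2}$ in the denominator of \eqref{est-L-tensor} is bounded below by $1$ and can be absorbed into constants (the $y'$-integral is over the bounded set $A$). Second, because $\operatorname{supp} g_n^{\mathcal T}\subset(3/4,7/8)$, the integrand is zero for $t\le 3/4$, and for $t>3/4$ the $s$-integration is confined to the bounded interval $I_t:=[3/4,\min(t,7/8)]$, so setting $T:=|I_t|\le 1/8$ the problem reduces to temporal integrals of the form $\int_{I_t}(t-s)^{-1/2}(x_n^2+(t-s))^{-l_0/2}\,ds$ with $l_0$ equal to the number of normal derivatives.

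For the $L^\infty$ bound \eqref{0706-6} with $1\le i\le n-1$, I apply \eqref{est-L-tensor} directly (since the second index $j=i\le n-1$) with $k_0=l_0=m_0=0$, yielding $|L_{ni}|\le c(t-s)^{-1/2}$; the remaining integral is $O(1)$. For $i=n$ the relation \eqref{1006-3} gives $L_{nn}=\tfrac12 D_{x_n}\Gamma-\sum_{k<n}L_{kk}$, so each summand is treated by the same argument combined with the heat-kernel bound $|D_{x_n}\Gamma|\le c(t-s)^{-1/2}$. For the first-derivative estimates \eqref{0706-7}–\eqref{0706-7-1}, tangential derivatives $D_{x_k}$ with $k\le n-1$ are transferred onto $g_n^{\mathcal S}(y')$ by integration by parts in $y_k$, reducing to the previous case with the still-bounded data $D_{y_k}g_n^{\mathcal S}g_n^{\mathcal T}$. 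The normal derivative $D_{x_n}$ is handled by differentiating under the integral: after the geometric reduction we are left with
\[
\int_{I_t}(t-s)^{-1/2}(x_n^2+(t-s))^{-1/2}\,ds = 2\sinh^{-1}\!\bigl(\sqrt{T/x_n^2}\bigr)\le c\bigl(1+(\ln(T/x_n^2))_+\bigr),
\]
which gives the logarithmic bound \eqref{0706-7-1} for $w_n^{\mathcal L}$ after substituting into the decomposition of $L_{nn}$.

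For \eqref{0706-8}, the second-order derivatives split into three types. Purely tangential pairs $D_{x_j}D_{x_k}$ with $j,k\le n-1$ are eliminated by integration by parts in $y$, reducing everything to the $L^\infty$ bound with $D_{y}^2 g_n^{\mathcal S}g_n^{\mathcal T}$. Mixed derivatives $D_{x_n}D_{x_k}$ with $k\le n-1$ are moved onto $g$ in the tangential coordinate by integration by parts, then the remaining normal derivative produces the same logarithmic integral as above. For the pure normal second derivative, I would exploit the explicit structure of the kernel: using the definition of $L_{ni}$ together with $D_{x_n}^2N(x-z)=-\Delta_{x'}N(x-z)$ off-diagonal (and $D_{x_n}N(x'-z',0)=0$, which kills the boundary term at $z_n=x_n$), one rewrites $D_{x_n}^2L_{ni}$ as a tangential Laplacian of a less singular kernel, bringing the remaining time integral back into the form already estimated.

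The main obstacle is precisely this last step: a crude application of \eqref{est-L-tensor} with $l_0=2$ produces $\int_{I_t}(t-s)^{-1/2}(x_n^2+(t-s))^{-1}\,ds\sim |x_n|^{-1}\arctan(\sqrt{T}/|x_n|)$, which behaves like $1/|x_n|$ as $x_n\to 0$ and is strictly worse than the claimed logarithmic bound. So the naive kernel estimate is not sharp enough, and the argument must use the algebraic identity $D_{x_n}^2N=-\Delta_{x'}N$ (away from the diagonal) together with the vanishing boundary traces, to trade every normal derivative beyond the first for a tangential one, after which the estimates already obtained suffice. Dimensions $n=2$ require a separate verification because the formula for $N$ and the boundary traces $D_{x_i}N(z',0)$ take different form, but the structure of the argument is identical.
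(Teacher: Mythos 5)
There is a genuine gap at \eqref{0706-7}. Your treatment of the normal derivative differentiates $L_{ni}$ under the integral and invokes \eqref{est-L-tensor} with $l_0=1$; after restricting to $|x'-y'|\geq 1$ this leaves the temporal integral
\[
\int_{I_t} (t-s)^{-1/2}\bigl(x_n^2+(t-s)\bigr)^{-1/2}\,ds \;\lesssim\; 1+\Bigl(\ln\tfrac{T}{x_n^2}\Bigr)_{+}.
\]
But this computation makes no use of which $i$ appears: the kernel bound \eqref{est-L-tensor} applied to $D_{x_n}L_{ni}$ gives exactly the same logarithmic growth for $1\leq i\leq n-1$ as for $i=n$. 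Hence your argument establishes \eqref{0706-6} and \eqref{0706-7-1}, but for $D_{x_n}w^{\mathcal L}_i$ with $i\leq n-1$ it yields only a logarithmic bound, not the uniform bound asserted in \eqref{0706-7}. You appear to read \eqref{0706-7} as concerning only tangential derivatives; it covers the full gradient, and the normal component is precisely the delicate one.

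The paper removes the logarithm by an algebraic rewriting of $D_{x_n}w^{\mathcal L}_i$ in terms of purely tangential derivatives before any kernel estimate is applied. Starting from the second relation in \eqref{1006-3}, $L_{ni}=L_{in}+B_{in}$ with $B_{in}=-R_i'f$ where $f=\int_0^t\!\int D_{x_n}\Gamma\,g_n$, and using that $N$ is harmonic off the diagonal together with $D_{x_i}I=R_i'$ for the Newtonian single-layer, one obtains
\[
D_{x_n}w^{\mathcal L}_i \;=\; -\sum_{k=1}^{n-1} D_{x_k}w^{\mathcal L}_{ik} \;+\; \tfrac12 D_{x_i} f ,\qquad 1\leq i\leq n-1,
\]
in which every derivative on the right-hand side is tangential. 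Each term is then $O(\|g\|_{L^\infty})$ by exactly the same geometric reduction you use for \eqref{0706-6}. Note that you do invoke this very mechanism — trading a normal derivative for tangential ones via the off-diagonal identity $D_{x_n}^2N=-\Delta_{x'}N$ — but only at the second-order stage for \eqref{0706-8}. It is already indispensable at first order; once you install the analogous identity there, the remainder of your argument (including the treatment of \eqref{0706-8}) is essentially the paper's, carried out at the kernel level rather than at the level of $w^{\mathcal L}$.
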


\begin{proof}
Let $1 \leq i \leq n-1$.
Noting that for $|x'| < 2$ and $y' \in A$, we have $|x' -y'| > 1$. Hence, due to the estimate \eqref{est-L-tensor}, we have that
\begin{align}
\label{0705-2} |   w^{{\mathcal L}}_{i} (x,t)|+|D_{x_k}   w^{{\mathcal L}}_{i} (x,t)|  & \leq c \| g\|_{L^\infty}, \quad 1 \leq k \leq n-1,\\
\label{0705-2-2}|D_{x_k} D_{x_l} w^{{\mathcal L}}_{i} (x,t)|
 & \leq c \| g\|_{L^\infty}, \quad  1 \leq k, l \leq n-1.
\end{align}
From \eqref{0705-2}, we get \eqref{0706-6} for $1 \leq i \leq n-1$.
%Hence, for $i \neq n$ or $j \neq n$, we have
%\begin{align}\label{ineqn}
%\| D_{x_i} D_{x_j} w^1 \|_{L^{r}} \leq  \| g\|_{L^\infty} \quad \forall \,  r < \infty.
%\end{align}

Set $f(x,t) = \int_0^t \int_{\Rn} D_{x_n} \Ga(x' -z', x_n, t-s) g_n(z',s) dz'ds$. Then, we note that   $ -B_{in} (x,t) = R'_i  f(x,t) $,  where $R'_i$ is $n -1$ dimensional Riesz transform. From the second identity of \eqref{1006-3},  we observe
\begin{align}\label{0819-1}
w^{{\mathcal L}}_i(x,t)
& =  D_{x_n} D_{x_i}\int_0^{x_n} \int_{\Rn} N(x-y) f(y,t)dy - R'_i  f(x,t).
\end{align}
On the other hand, we note that
\begin{align*}
\De_x \int_0^{x_n} \int_{\Rn} N(x-y) f(y,t)dy &  = \frac12 f(x,t)  + I( D_{x_n} f(\cdot, x_n,t))(x'),
\end{align*}
where  $I f (\cdot, x_n,t) = \int_{\Rn} N(x' - y', 0) f(y', x_n,t) dy'$. Since $D_{x_i} I = R'_i$, we have from \eqref{0819-1}
\begin{align}\label{0706-1}
\notag  D_{x_n} w^{{\mathcal L}}_i (x,t) & = - \De_{x'}   D_{x_i} \int_0^{x_n} \int_{\Rn} N(x-y) f(y,t)dy\\
\notag & \quad   +  D_{x_i} \big( \frac12 f(x,t) + I( D_{x_n} f(x', x_n,t))(x')  \big) -   D_{x_n}  R'_i  f(x,t)\\
& = - \sum_{k =1}^{n-1} D_{x_k}  w^{{\mathcal L}}_{ik} (x,t)      + \frac12   D_{x_i}    f(x,t),
\end{align}
where
\begin{align*}
w^{{\mathcal L}}_{ik} (x,t)   = \int_0^t \int_{\Rn} L_{ik} (x' -y', x_n, t-s) g_n(y',s) dy' ds,\quad k=1, 2, \cdots, n-1.
\end{align*}
It follows from \eqref{est-L-tensor} that, for $|x' | < 2$,
\begin{align}\label{0706-2}
| D_{x'} w^{{\mathcal L}}_{ik} (x,t) |+|  D_{x_i} f(x,t) | \leq  \| g_n \|_{L^\infty}.
\end{align}
Summing up \eqref{0705-2}, \eqref{0706-1} and \eqref{0706-2}, we obtain \eqref{0706-7}.

% we showed that for $1 \leq i \leq n-1$,
%\begin{align}\label{0706-5}
%| \na w^{{\mathcal L}}_i (x,t) | \leq  c\| g\|_{L^\infty}.
%\end{align}

Similarly, for $1 \leq l \leq n$, we have from \eqref{0706-1}
\begin{align}\label{0819-2}
  D_{x_n} D_{x_l} w^{{\mathcal L}}_i (x,t)
& = - \sum_{k =1}^{k = n-1}  D_{x_l} D_{x_k}  w^{{\mathcal L}}_{ik} (x,t)      + \frac12   D_{x_l} D_{x_i}    f(x,t).
\end{align}
Here, we note that
\begin{align}\label{0819-3}
|D_{x_l} D_{x_i}    f(x,t)|\leq c\| g\|_{L^\infty}.
\end{align}
Again, using the estimate \eqref{est-L-tensor}, for $1 \leq l \leq n$ and $1 \leq k \leq n-1$,  we get
\begin{align}\label{0819-4}
|D_{x_l} D_{x_k}  w^{{\mathcal L}}_{ik} (x,t)|  \leq c \int_0^t(t-s)^{-\frac12} ( x_n^2 + t-s)^{-\frac12} ds \| g\|_{L^\infty}
\leq c (1 +   (\ln \frac{x_n^2}t)_+ ) \| g\|_{L^\infty}.
\end{align}
Hence, combining estimates \eqref{0705-2-2}, \eqref{0819-2},  \eqref{0819-3} and \eqref{0819-4}, we obtain \eqref{0706-8}.

%we have
%\begin{align}\label{0706-4}
%| \na^2 w^{{\mathcal L}}_i (x,t) \leq c |\ln \frac{x_n^2}t|\| g\|_{L^\infty}.
%\end{align}

Now, it remains to estimate $w_n$. The first equality of  \eqref{1006-3} implies
\begin{align}\label{0819-6}
     w^{{\mathcal L}}_n (x,t)
& = - \sum_{k =1}^{n-1}   w^{{\mathcal L}}_{kk} (x,t)      -2      f(x,t).
\end{align}
Hence, we observe that,  for $1 \leq k \leq n$,
\begin{align}\label{0819-7}
|   w^{{\mathcal L}}_n (x,t)|
  \leq c \| g\|_{L^\infty}, \quad
|D_{x_k}   w^{{\mathcal L}}_{n} (x,t)|
  \leq c ( 1 +  (\ln \frac{x_n^2}t)_+ ) \| g\|_{L^\infty}.
\end{align}
%and, for $1 \leq l \leq n-1$,
%\begin{align}\label{0819-8}
%| D_{x_n} w^{{\mathcal L}}_n (x,t)|+| D_{x_n} D_{x_l} w^{{\mathcal L}}_n (x,t)| \leq c \abs{\ln \frac{x_n^2}t}\| g\|_{L^\infty}.
%\end{align}
Therefore, the estimates \eqref{0706-6} and \eqref{0706-7-1} are consequences of  \eqref{0819-7}. We complete the proof.
\end{proof}

Next lemma shows a pointwise control for $n -1$ dimensional Riesz transform
of $e^{-\frac{|x'|^2}t}$, which is one of crucial estimates in our analysis.

\begin{lemm}\label{lemma0709-1}
Let $ 1\leq |X'| \leq 5$. Then,
\begin{align*}
  \int_{\Rn}   e^{-\frac{|X' -z'|^2}{4t}} \frac{z_1}{| z'|^n} dz = c_{n-1} t^{\frac{n-1}2 }  \frac{X_1  }{|X' |^{n}}  + J(X', t),
\end{align*}
where
%\begin{align*}
$|J(X', t) | \leq  c  t^{\frac{n}2  }$ and $c_{n-1}=(4\pi)^{\frac{n-1}{2}}$.
%\end{align*}
\end{lemm}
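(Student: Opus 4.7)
The plan is to center the Gaussian at $X'$ via a translation, extract the constant Taylor coefficient as the main term, and bound everything else.

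Substituting $u' = z' - X'$ converts the identity to
\[
\int_{\Rn} e^{-|u'|^2/(4t)}\,F(u')\,du' = (4\pi t)^{(n-1)/2} F(0) + J(X',t),\qquad F(u'):=\frac{X_1+u_1}{|X'+u'|^n},
\]
where $F(0)=X_1/|X'|^n$ and $(4\pi t)^{(n-1)/2}=c_{n-1} t^{(n-1)/2}$; it remains to show $|J|\le ct^{n/2}$ for $J=\int e^{-|u'|^2/(4t)}(F(u')-F(0))\,du'$. I would split $J=J_1+J_2$ according to $|u'|\le\tfrac12$ or $|u'|>\tfrac12$.

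On the near region $|u'|\le\tfrac12$, the assumption $|X'|\ge 1$ gives $|X'+u'|\ge\tfrac12$, so $F$ is $C^2$ there with bounds uniform in $1\le|X'|\le 5$. A second-order Taylor expansion yields $F(u')-F(0)=\nabla F(0)\cdot u'+O(|u'|^2)$; the linear term integrates to zero by symmetry of the Gaussian, so $|J_1|\le c\int|u'|^2 e^{-|u'|^2/(4t)}\,du'\le c t^{(n+1)/2}$, absorbed in $ct^{n/2}$ for $t\le 1$.

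On the far region $|u'|>\tfrac12$ the Gaussian is exponentially small: $e^{-|u'|^2/(4t)}\le e^{-1/(32t)}e^{-|u'|^2/(8t)}$. This handles the subtracted-constant piece $F(0)\int_{|u'|>1/2} e^{-|u'|^2/(4t)}\,du'$, and after splitting further according to whether $|X'+u'|\ge\tfrac14$ or $|X'+u'|<\tfrac14$, it also controls the region where $F$ stays bounded. The delicate piece is the neighborhood $|X'+u'|<\tfrac14$ of the singularity of $F$: a translation $v'=X'+u'$ reduces it to the principal-value integral
\[
\int_{|v'|<1/4}e^{-|v'-X'|^2/(4t)}\frac{v_1}{|v'|^n}\,dv',
\]
from which I would subtract $e^{-|X'|^2/(4t)}$ (a constant in $v'$, which integrates to $0$ in PV by the oddness of $v_1/|v'|^n$) and then Taylor-expand the resulting smooth factor to second order in $v'$. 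The first and second derivatives of the Gaussian produce $1/t$ and $1/t^2$ prefactors, but always multiplied by $e^{-|v'-X'|^2/(4t)}\le e^{-9/(64t)}$ on this region, which dominates any polynomial in $1/t$ and yields the bound $ct^{n/2}$. For large $t$ (if $t$ is not a priori bounded), a scaling $w'=(z'-X')/\sqrt t$ renders the left-hand side a uniformly bounded function of $Y'=X'/\sqrt t$, and the desired inequality follows from $t^{(n-1)/2}\le t^{n/2}$ for $t\ge 1$.

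The main obstacle is the last principal-value term at the singularity $u'=-X'$: one must simultaneously exploit the Riesz-kernel cancellation via a suitable subtraction from the Gaussian factor and balance $e^{-c/t}$ against the $1/t^k$ blow-up of its derivatives, which is the quantitative input that forces the bound to come out as $t^{n/2}$ rather than the naive $t^{(n-1)/2}$.
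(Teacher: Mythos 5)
Your proposal is correct and follows essentially the same decomposition as the paper's proof: extract the leading term from the Gaussian center $z'=X'$ (where the Riesz kernel is smooth because $|X'|\ge 1$), use the odd‑symmetry cancellation of $z_1/|z'|^n$ together with the exponential smallness of $e^{-|X'-z'|^2/(4t)}$ near the singularity $z'=0$, and dispose of the far region by Gaussian decay. The only cosmetic differences are that you use fixed radii $1/2,\,1/4$ where the paper uses radii proportional to $|X'|$ (equivalent since $1\le|X'|\le5$), you exploit a second‑order Taylor expansion so the near‑region error comes out as $t^{(n+1)/2}$ rather than the paper's first‑order $t^{n/2}$, and consequently you split off a separate $t\ge 1$ case (handled via uniform boundedness of the Riesz transform of a Gaussian) that the paper's version absorbs automatically.
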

\begin{proof}
We divide $\Rn$ by three disjoint sets
$D_1, D_2$ and $D_3$, which are defined by
\[
D_1=\bket{z'\in\Rn: |X'-z'| \leq \frac1{10} |X'|},
\]
\[
D_2=\bket{z'\in\Rn: |z'| \leq \frac1{10} |X'|}, \qquad D_3=\Rn\setminus
(D_1\cup D_2).
\]
We then split the following integral into three terms as follows:
\begin{align}\label{0730-2}
\int_{\Rn} e^{-\frac{| X'-z'|^2}{4t}}  \frac{z_1}{|z'|^n} dz' =
\int_{D_1}\cdots + \int_{D_2} \cdots+ \int_{D_3}\cdots := J_1 + J_2
+J_3.
\end{align}
Since $\int_{D_2} \frac{z_1}{|z'|^n} dz' =0$,  we have $\int_{D_2} \frac{z_1}{|z'|^n}  e^{-\frac{|X'-z'|^2}{4t}}   dz' = \int_{D_2} \frac{z_1}{|z'|^n} \big(
e^{-\frac{|X'-z'|^2}{4t}} - e^{-\frac{|X'|^2}{4t}} \big) dz'$.
%Since $ \frac12 \leq |X' | \leq 2$,
Thus, using the mean-value Theorem, we  have
\begin{align}\label{est-J2}
\notag |J_2| & =\abs{\int_{D_2}   \frac{z_1}{|z'|^n} \big(
 e^{-\frac{|X'-z'|^2}{4t}} -  e^{-\frac{|X'|^2}{4t}} \big) dz'
}\leq   ct^{-1  }|X'| e^{-c\frac{|X'|^2}{t}}  \int_{D_2}
\frac{1}{|z'|^{n-2}}   dz'\\
&\leq  c t^{-1  }|X'|^2 e^{-c\frac{|X'|^2}{4t}} \leq  c
e^{-c\frac{|X'|^2}{4t}}.
\end{align}
Since $\int_{|z'| > a} e^{-|z'|^2} dz' \leq c_1 e^{-c_2 a^2}, \, a > 0$, we have
\begin{equation}\label{est-J3}
 |J_3| \leq  \frac{c}{|X'|^{n-1}  }   \int_{\{|z'-X'| \geq \frac1{10} |X'|\}} e^{-\frac{|z'-X'|^2}{4t}} dz'\leq   \frac{ct^{\frac{n-1}{2} }}{|X'|^{n-1}}  e^{-c\frac{ |X'|^2}{t}}\leq   c
e^{-c \frac{|X'|^2}{t}}.
\end{equation}
%\begin{align}
%\notag |J_3| &   \leq  \frac{c}{|X'|^{n-1}  }   \int_{\{|z'-X'| \geq \frac1{10} |X'|\}} e^{-\frac{|z'-X'|^2}{t}} dz'\\
%\notag & =   \frac{ct^{\frac{n-1}2 }}{|X'|^{n-1}} \int_{\{|z'| \geq \frac1{10}\frac{ |X'|}{\sqrt{t}}\}} e^{- |z'|^2} dz'\\
%\label{est-J3}
%&\leq   \frac{ct^{\frac{n-1}{2} }}{|X'|^{n-1}}  e^{-c\frac{ |X'|^2}{t}}\leq   c
%e^{-c \frac{|X'|^2}{t}}.
%\end{align}
Due to    $ 1 \leq |X'| \leq 5$, it follows  from \eqref{est-J2} and \eqref{est-J3} that
\begin{align}\label{0604}
|J_2 (X',t)|+|J_3(X',t)| \leq   c e^{-\frac{c}{ t}}.
\end{align}

%For $|x'| < \frac12$ and $y'
%\in A$,  we note that
%$X_1 = x_1 - y_1 \geq -\frac12 + 1 =\frac12$ and
%$\frac15 |X'| \leq \frac15 (|x'| + | y'|) \leq \frac15 \cdot \frac52
%\leq X_1$. Then, for $|X'-z'| \leq \frac1{10} |X'| $,  we have
%\[
%z_1 = z_1 -X_1 + X_1 \geq X_1 - |z_1 - X_1| \geq X_1 - |X' -z'|
%\]
%\[
%\geq X_1 - \frac1{10} |X'| \geq \frac15|X'| - \frac1{10} |X'| = \frac1{10} |X'|.
%\]

Now, we estimate $J_1$. Firstly, we decompose $J_1$ in the following way:
%Therefore,  we obtain
\begin{align}\label{est-J1}
\notag J_1 & = \int_{D_1}   e^{-\frac{ |X' -z'|^2}{4t} } \frac{z_1}{|z'|^n} dz'
%\\
%\notag &
 =  (4t)^{\frac{n-1}2   } \int_{\{|z'| \leq  \frac1{10}\frac{ |X'|}{\sqrt{t}}\}}    e^{-|z'|^2 } \frac{X_1 - 2t^\frac12 z_1}{|X' - 2t^\frac12 z'|^{n}}dz'
\\
\notag &
 =  (4t)^{\frac{n-1}2 } \int_{\{|z'| \leq  \frac1{10}\frac{ |X'|}{\sqrt{t}}\}}     e^{-|z'|^2 } \Big( \frac{X_1 - 2t^\frac12 z_1}{|X' - 2t^\frac12 z'|^{n}} - \frac{X_1  }{|X' |^{n}} \Big)dz'\\
\notag &\quad - (4t)^{\frac{n-1}2  } \frac{X_1  }{|X' |^{n}} \int_{\{|z'| \geq  \frac1{10}\frac{ |X'|}{\sqrt{t}}\}}   e^{-|z'|^2 }   dz'
%\\
%\notag & \quad
+ (4t)^{\frac{n-1}2 } \frac{X_1  }{|X' |^{n}} \int_{\Rn}    e^{-|z'|^2 }   dz'\\
& = J_{11} + J_{12} + J_{13}.
\end{align}
We observe that
\begin{align}\label{0730-1}
|J_{11} (X',t)| \leq ct^{\frac{n}2  }, \qquad |J_{12} (X',t)| \leq   e^{-c\frac{|X'|^2}{t}} \leq ct^{\frac{n}2 }.
\end{align}
Here we set $c_{n-1}:=\int_{\Rn}    e^{-|z'|^2 }  dz'=\pi^{\frac{n-1}{2}}$
and take $J: = J_2 + J_3 + J_{11} + J_{12}$. Then, combining \eqref{0730-2},  \eqref{0604}, \eqref{est-J1} and \eqref{0730-1}, we complete the proof of Lemma \ref{lemma0709-1}.
\end{proof}

%%%%%%%%%%%%%%%%%%%%%%%%%%%%%%%%%%%%%%%%%%%%%%%%%
%%%%%%%%%%%%%%%%%%%%%%%%%%%%%%%%%%%%%%%%%%%%%%%%%
%%%%%%%%%%%%%%%%%%%%%%%%%%%%%%%%%%%%%%%%%%%%%%%%%

\section{Proof of Theorem \ref{maintheorem-SS} for Stokes equations}
\label{thm1-1ss}
\setcounter{equation}{0}

The case for the Stokes system in Theorem \ref{maintheorem-SS} can be verified by the following proposition, where a class of boundary data for temporal variable is specified to show that velocity is bounded but its gradient is not integrable in $L^q_{\rm loc}$ near boundary. The case of the Navier-Stokes equations will be treated in subsection \ref{ns-thm11}.

%We are ready to prove Theorem \ref{maintheorem-SS} for Stokes equations.
\begin{prop}\label{lemma0406}
Let $1 < p < \infty$ and $g$  satisfy \eqref{0502-6} and \eqref{boundarydata}. Assume further that $g_n^{{\mathcal T}} \in L^\infty({\mathbb R}) \setminus \dot B^{\frac12 -\frac1{2p}}_{pp} ({\mathbb R})$.
Suppose that $w$ is a solution of the Stokes equations
\eqref{Stokes-bvp-200}-\eqref{Stokes-bvp-210} defined by
\eqref{rep-bvp-stokes-w} with $f=0$  and  the boundary data $g$.  Then, $w$ is bounded in $B_2^+ \times (0, 4)$ but the normal derivative of tangential components for $w$ are unbounded in $L^p(Q^+_{\frac12}) $, i.e. $w$
satisfies
\begin{equation}\label{L4Lp-w}
\| w\|_{L^\infty(B_2^+ \times (0, 4)) }<\infty,
\end{equation}
\begin{equation}\label{L2-grad-w}
\int^{1}_{\frac34} \int_{B^+_\frac12} |D_{x_n} w_i(x,t) |^{p} dxdt =
\infty, \qquad  i =1,\cdots, n-1.
\end{equation}
\end{prop}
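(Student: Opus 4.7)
The plan is to decompose $w$ via the explicit Poisson kernel \eqref{Poisson-tensor-K}, isolate the singular part of the $B_{in}$-convolution as the $x_n$-derivative of a one-dimensional heat extension of $g_n^{\calT}$, and conclude by a parabolic trace/Besov characterization. Since only $g_n=g_n^{\calS}g_n^{\calT}$ is nonzero, only the $j=n$ column of $K_{ij}$ contributes. Using the identity $L_{in}=L_{ni}-B_{in}$ from \eqref{1006-3} together with \eqref{Poisson-tensor-K}, for $1\le i\le n-1$ I get
\[
w_i(x,t)=4w_i^{\calL}(x,t)-4w_i^{\calB}(x,t)+2g_n^{\calT}(t)h_i(x),
\]
where $w_i^{\calL}$ is as in \eqref{wi-Lgn}, $w_i^{\calB}(x,t):=\int_0^t\!\int_{\Rn}B_{in}(x'-y',x_n,t-s)g_n(y',s)\,dy'ds$, and $h_i(x):=\int_{\Rn}D_{x_i}N(x'-y',x_n)g_n^{\calS}(y')\,dy'$. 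The component $w_n$ is treated analogously with an additional $-2D_{x_n}\Gamma$ contribution.

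The support of $g_n^{\calS}$ forces $|x'-y'|\ge 1$ on $B_2^+$, so $h_i$ together with its derivatives is bounded there, and Lemma \ref{lemm0706-1} controls $w_i^{\calL}$ and $D_{x_n}w_i^{\calL}$ (for $i\le n-1$). The key step is to apply Lemma \ref{lemma0709-1} with $X'=x'-y'$ to split
\[
\int_{\Rn}B_{in}(x'-y',x_n,\tau)g_n^{\calS}(y')\,dy'=-c_*A_i(x')\frac{x_n e^{-x_n^2/(4\tau)}}{\tau^{3/2}}+E(x,\tau),
\]
where $A_i(x'):=\int_{\Rn}\frac{x_i-y_i}{|x'-y'|^n}g_n^{\calS}(y')\,dy'$ is smooth and bounded on $\{|x'|\le 2\}$, and $|E(x,\tau)|\le c\tau^{-1/2}$. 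Convolving in $s$ against $g_n^{\calT}$, the leading part of $w_i^{\calB}$ becomes $-c_*A_i(x')u(x_n,t)$, where $u$ is the $1$-dimensional heat-semigroup extension of $g_n^{\calT}$ into $\R_+$; by the maximum principle $\|u\|_\infty\le\|g_n^{\calT}\|_\infty$, yielding \eqref{L4Lp-w}. Differentiating the splitting, $D_{x_n}E$ produces at worst a logarithmic singularity in $x_n^2/(t-s)$, which lies in every $L^p$; combined with the bounds above I obtain
\[
D_{x_n}w_i(x,t)=-4c_*A_i(x')\,D_{x_n}u(x_n,t)+R(x,t),\qquad R\in L^p(Q_{1/2}^+).
\]

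Since $y_i\le -3$ on $A$ and $g_n^{\calS}\ge 0$ is nontrivial, $A_i(x')\ge c_0>0$ uniformly on $\{|x'|\le\tfrac12\}$. Assuming for contradiction that $\|D_{x_n}w_i\|_{L^p(Q_{1/2}^+)}<\infty$ and noting that $u$ is independent of $x'$, integrating out the tangential variable yields $D_{x_n}u\in L^p((0,\tfrac12)\times(\tfrac34,1))$. Because $g_n^{\calT}$ is compactly supported in $(\tfrac34,\tfrac78)$, the Gaussian decay of the heat Poisson kernel $P(x_n,\tau)=\frac{x_n}{\sqrt{4\pi\tau^3}}e^{-x_n^2/(4\tau)}$ for $x_n\ge\tfrac12$ and its polynomial decay in $t$ for $t\ge 1$ make the complementary tail finite, so $D_{x_n}u\in L^p(\R_+\times\R)$. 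The Fourier identity $D_t^{1/2}u=-D_{x_n}u$ for the heat extension (easily verified via Fourier transform in $t$) shows $u\in\dot W_p^{1,1/2}(\R_+\times\R)$, and the parabolic trace theorem (the $1$-dimensional analog of Proposition \ref{prop2}(3)) then forces $g_n^{\calT}=u|_{x_n=0}\in\dot B^{\frac12-\frac1{2p}}_{pp}(\R)$, contradicting the hypothesis. The main obstacle is the clean isolation of the $1$-dimensional heat-extension structure from the Riesz-transformed kernel $B_{in}$: Lemma \ref{lemma0709-1} is exactly what separates the $(n-1)$-dimensional Riesz piece (yielding the multiplier $A_i(x')$ bounded below) from the $x_n$-direction heat tail (which reconstructs the $1$-D heat Poisson kernel), after which the blow-up is forced by the standard parabolic trace.
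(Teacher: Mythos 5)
Your proof is correct and follows essentially the same approach as the paper: decompose $w_i$ via the Poisson kernel, apply Lemma \ref{lemma0709-1} to isolate the leading contribution of the $B_{in}$-convolution as a one-dimensional heat extension of $g_n^{\calT}$ multiplied by a smooth tangential factor, control all remaining terms, and invoke the anisotropic trace theorem to force the blow-up. The only cosmetic difference is that you phrase the conclusion as a contradiction while the paper gives a direct lower bound, and you make the positivity of the tangential factor explicit; both are equivalent.
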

\begin{rem}\label{rem0314}
In Appendix \ref{appendix0131}, we give an example, for a clearer understanding, of a function $ g^{\mathcal T}_n \in L^\infty({\mathbb R}) \setminus \dot B^{\frac12 -\frac1{2p}}_{pp} ({\mathbb R})$, $1<p<\infty$.
\end{rem}
\begin{proof}
We prove only the case that $i =1$, since the arguments are similar for other cases.
%{\footnote{KK: {\color{magenta} May we refer to previous results for $L^{\infty}$ estimates?}}}
From  \eqref{1006-3} and \eqref{Poisson-tensor-K},  we have
%\begin{align*}
%K_{1n}(x'-y',x_n,t) & = -L_{1n} (x'-y',x_n,t) +     \de(t)  D_{x_1} N(x'-y',x_n)\\
%& = -L_{n1} (x'-y',x_n,t) -B_{n1}(x'-y', x_n, t) +     \de(t)  D_{x_1} N(x'-y',x_n).
%\end{align*}
%{\footnote{KK: {\color{magenta} Notational convention: how about $w^1_{1}, w^2_{1}, w^3_{1}$ as $w^L_1, w^B_1, w^N_1$?}}}
\begin{align}\label{1220-8}
\notag w_1(x,t) & =    4\int_0^t   \int_{A} L_{n1}(x'- y', x_n, t
-s) g_n(y', s) dy' ds\\
\notag & \quad  + 4\int_0^t   \int_{A} B_{1 n}(x'- y', x_n, t
-s) g_n(y', s) dy' ds \\
\notag &\quad
-   2\int_{A} D_{x_1} N(x' -y', x_n)g_n(y',t) dy'\\
&:=   w^{\calL}_{1}(x,t) + w^{\calB}_{1}(x,t)+ w^{\calN}_1(x,t).
\end{align}
We note that $ 1 \leq |x' -y'| \leq 4\sqrt{n} $ for $|x'| \leq 2  $ and $
y' \in A$, and thus,  for $(x, t) \in B_2^+ \times (0,4) $ it is direct that
\begin{align}\label{0706-9}
\|w^{\calN}_1(x,t)\|_{L^\infty(B_2^+ \times (0,4))}+\|\na w^{\calN}_1(x,t)\|_{L^p(B_2^+ \times (0,4))} \leq c\| g_n\|_{L^\infty} \quad 1 < p < \infty.
\end{align}
%On the other hand, using the estimate \eqref{est-L-tensor}, we can see that
%\begin{align*}
%|w^{\calL}_1(x,t)|\le c\| g_n\|_{L^\infty},  \qquad  |\na w^{\calL}_1(x,t)| \leq c  | \ln \frac{x_n^2}t| \| g_n\|_{L^\infty}.
%\end{align*}
From Lemma \ref{lemm0706-1}, it is straightforward that for $1 < r < \infty$,
\begin{align}\label{0411-w}
\| w^{\calL}_1\|_{L^\infty(B_2^+ \times (0,4))}+\| \na w^{\calL}_1\|_{L^r(B_2^+ \times (0,4))} \leq c\| g_n\|_{L^\infty}.
%\| g_n\|_{L^\infty (\Rn \times (0, \infty))}.
\end{align}

%Since $L_{1n} = L_{n1} + B_{1n}$ by the second equality of \eqref{1006-3}, we divide $w_1^1= w_1^{11} + w_1^{12}$ by
%\begin{align}\label{0411-v}
%\notag w^1_1(x,t) &=  \int_0^t \int_{\Rn} L_{n1}(x'- y', x_n, t -s) g_n(y',
%s)dy'ds\\
%\notag & \qquad  +\int_0^t \int_{\Rn} B_{1n}(x'- y', x_n, t -s) g_n(y', s)dy'ds\\
%&: =
%w^{11}_1 (x,t) + w^{12}_1 (x,t).
%\end{align}
%
%
%
%
%From similar estimate with $w_1$, we have
%\begin{align*}
%\| w^{11}_1\|_{L^\infty(Q^+_1)}, \quad  \| \na w^{11}_1\|_{L^{p}(Q^+_1)} \leq \| g_n\|_{L^\infty (\Rn \times (0, \infty))}.
%\end{align*}
%
%
%
%
%
%
%

Next, we estimate $w_1^{\calB}$. Since $1 \leq |x' -y'| \leq 4\sqrt{n}$,  reminding \eqref{B-tensor} and Lemma \ref{lemma0709-1}, we note that
\begin{align}\label{0710-3}
 \notag  w^{\calB}_1(x,t) &=   c_n \int_{0}^t \int_{\Rn} g_n(y', s)   \frac{x_n}{(t -s)^{\frac{n+2}2}}e^{-\frac{x_n^2}{4(t-s)}}
\int_{\Rn} e^{-\frac{|x'-y'-z'|^2}{4(t-s)}}    \frac{z_1}{|z'|^n}   dz' dy' ds\\
& : = w^{\calB,1}_1(x,t) + w^{\calB,2}_1(x,t),
\end{align}
where
\begin{align}
w^{\calB,1}_1(x,t)& = c_n \int_{0}^t    g^{\mathcal T}_n( s)  \frac{x_n}{(t -s)^{\frac{3}2}}e^{-\frac{x_n^2}{4(t-s)}} \int_{\Rn}    g^{\mathcal S}_n(y')      \frac{x_1 -y_1  }{|x' -y' |^{n-1}}
 dy' ds,\\
w^{\calB,2}_1(x,t)& = c_n \int_{0}^t    g^{\mathcal T}_n( s)  \frac{x_n}{(t -s)^{\frac{n+2}2}}e^{-\frac{x_n^2}{4(t-s)}} \int_{\Rn}    g^{\mathcal S}_n(y')  J(x' -y', t-s)
 dy' ds.
\end{align}
Since $1 \leq |x' -y'| \leq 5$, it follows from Lemma \ref{lemma0709-1} that
\begin{align} \label{w12-1}
  | w^{\calB}_1(x,t)| &\leq    c_n \int_{0}^t  e^{-\frac{x_n^2}{t-s}} \big( \frac{x_n}{(t -s)^{\frac{3}2}}   + \frac{x_n}{t -s}  \big) ds \| g_n\|_{L^\infty}
  \leq    c_n  \| g_n\|_{L^\infty}.
\end{align}
Thus, due to \eqref{1220-8}, \eqref{0706-9} and \eqref{w12-1}, we obtain \eqref{L4Lp-w}.
Noting that
\begin{align*}
 D_{x'}   w^{\calB}_1(x,t) &=   c_n \int_{0}^t \int_{\Rn} D_{y'} g_n(y', s) \frac{x_n}{(t -s)^{\frac{n+2}2}}e^{-\frac{x_n^2}{t-s}}
\int_{\Rn} e^{-\frac{|x'-y'-z'|^2}{t-s}}    \frac{z_1}{|z'|^n}   dz' dy' ds,
\end{align*}
we similarly have
\begin{align} \label{w12-2}
  |D_{x'} w^{\calB}_1(x,t)| &\leq    c_n  \| D_{y'} g_n\|_{L^\infty}.
\end{align}
Using Lemma \ref{lemma0709-1}, we have
\begin{align}\label{0710-2}
 \notag  D_{x_n} w^{\calB,2}_1(x,t) &=     c_n \int_{0}^t \int_{\Rn}  g_n(y', s) \frac{1}{(t -s)^{\frac{n+2}2}}e^{-\frac{x_n^2}{4(t-s)}} ( 1 -\frac{x_n^2}{t-s} )
J(x' -y', t-s) ds\\
\notag & \leq c  \int_{0}^t     \frac{1}{t -s }e^{-\frac{x_n^2}{4(t-s)}} ( 1 -\frac{x_n^2}{t-s} )
  ds\| g_n\|_{L^\infty}\\
& \leq c ( 1 +|\ln \frac{x_n^2}t|)  \| g_n\|_{L^\infty}.
\end{align}
On the other hand, we
note that
\begin{align}\label{0710-1}
   w^{\calB,1 }_1(x,t)
&=   c_n \int_{0}^t D_{x_n} \Ga_1 (x_n, t-s) g^{{\mathcal T}}_n(s)ds \psi(x'),
\end{align}
where $\Ga_1$ is one dimensional Gaussian kernel and  $\psi(x') = \int_{\Rn} \frac{x_1 -y_1}{|x' -y'|^n} g_n^{{\mathcal S}}(y') dy'   $, which is smooth in $|x' | \leq 2$. Since ${\rm supp} \, g^{\mathcal T}_{n} \subset (\frac34, \frac78)$, we have for $t > 1$
\begin{align*}
 \abs{\int_{0}^t D^2_{x_n} \Ga_1 (x_n, t-s) g^{{\mathcal T}}_n(s)ds} & \leq      t^{-\frac32}   e^{-\frac{x_n^2}{t}} \| g^{{\mathcal T}}_n \|_{L^\infty}
\end{align*}
and for $x_n > 1$,
%\footnote{KK:\color{red} Are working on $Q_1^+$? If so, we don't need to consider $x_n>1$.}
we have
\begin{align*}
 |\int_{0}^t D^2_{x_n} \Ga_1 (x_n, t-s) g^{{\mathcal T}}_n(s)ds|
 \leq c \| g^{{\mathcal T}}_n \|_{L^\infty}
 \left\{\begin{array}{ll}
 x_n^{-1},\quad& t < 1,\\
   t^{-\frac32} e^{-\frac{x_n^2}t}, \quad& t > 1.
 \end{array}
 \right.
\end{align*}
This implies that
\begin{align}\label{0730-3-1}
\notag \int_1^\infty  \int_0^\infty       |\int_{0}^t D^2_{x_n} \Ga_1 (x_n, t-s) g^{{\mathcal T}}_n(s)ds|^p           dx_n dt
& \leq  c\| g_n^{\mathcal T}\|_{L^\infty ({\mathbb R})},\\
\int_{1}^\infty  \int_0^\infty      |\int_{0}^t D^2_{x_n} \Ga_1 (x_n, t-s) g^{{\mathcal T}}_n(s)ds|^p          dt     dx_n & \leq c\| g_n^{\mathcal T}\|_{L^\infty ({\mathbb R})}.
\end{align}
By the trace theorem of anisotropic space  (see  (3) of Proposition \ref{prop2}), we have
\begin{align}\label{0730-4}
\notag \| g_n^{{\mathcal T}}\|_{\dot B_{pp}^{ \frac12 -\frac1{2p}} ({\mathbb R})} & \leq c \|\int_{0}^t D_{x_n} \Ga_1 (x_n, t-s) g^{{\mathcal T}}_n(s)ds\|_{\dot H^{1, \frac12}_{p} ({\mathbb R}_+ \times {\mathbb R})}\\
& \leq c \|D_{x_n}\int_{0}^t D_{x_n} \Ga_1 (x_n, t-s) g^{{\mathcal T}}_n(s)ds\|_{L^p ({\mathbb R}_+ \times {\mathbb R})}.
\end{align}
With the aid of \eqref{0710-1}, \eqref{0730-3-1} and \eqref{0730-4}, we conclude that
\begin{align*}
\|D_{x_n} w_1^{\calB}\|_{L^{p} (Q^+_\frac12)} & \geq c \|D_{x_n}\int_{0}^t  D_{x_n} \Ga_1 (x_n, t-s) g^{{\mathcal T}}_n(s)ds\|_{L^p ({\mathbb R}_+ \times {\mathbb R})} \| \psi\|_{L^p (B'_1)} \\
&\quad -c \| g_n\|_{L^\infty (\Rn \times (0, \infty))}\\
&\geq c \| g_n^{{\mathcal T}}\|_{\dot B_{pp}^{ \frac12 -\frac1{2p}} ({\mathbb R})} \| \psi\|_{L^p (B'_1)} - c \| g_n\|_{L^\infty (\Rn \times (0, \infty))}.
%\\
%& =\infty.
\end{align*}
Since the righthand side is unbounded, we obtain \eqref{L2-grad-w}.
Hence, we complete the proof  Proposition   \ref{lemma0406}.
\end{proof}

\section{Proof of Theorem \ref{Stokes-maximal}}
\label{thm1-4ss}
\setcounter{equation}{0}

Before we prove Theorem \ref{Stokes-maximal} for the Stokes system,
we first show an elementary estimate, which is useful for our purpose.
\begin{lemm}\label{lemma0803-1}
For $1  < p  <  \infty$, $1 \leq  r  \leq  \infty$ and $0 < \be <1$,
\begin{align*}%\label{apendix2}
\|\na \Ga *   f(t) \|_{\dot B^{\be }_{pr} (\R_+)} +  \|\na \Ga^* *    f(t) \|_{\dot B^{\be }_{pr} (\R_+)}
 %+  \| \na \Ga^* * {\mathbb P } f(t) \|_{\dot B^{\al + \frac{n}r }_r (\R_+)}  %\nonumber\\
 \leq  c \int_0^t (t -s)^{-\frac12 -\frac{\be}{2}  } \| f(s)\|_{L^p (\R_+)} ds,
\end{align*}
where
\begin{align*}
\Ga *   f(x,t) & = \int_0^t \int_{\R_+} \Ga(x -y, t -s) f(y,s) dyds,\\
\Ga^* *   f(x,t)& = \int_0^t \int_{\R_+} \Ga(x' -y', x_n + y_n , t -s)  f(y,s) dyds.
\end{align*}
\end{lemm}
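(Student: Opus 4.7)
The plan is to reduce both convolutions to the full-space heat semigroup applied to a suitable extension of $f$, and then invoke a sharp time-sliced mapping property of that semigroup. Let $\tilde f(\cdot,s)$ denote the zero extension of $f(\cdot,s)$ from $\R_+$ to ${\mathbb R}^n$. Then for $x \in \R_+$,
\[
\Gamma * f(x,t) = \int_0^t e^{(t-s)\Delta}\tilde f(s)(x)\,ds,
\]
and $\|\tilde f(s)\|_{L^p({\mathbb R}^n)} = \|f(s)\|_{L^p(\R_+)}$. For the reflected kernel, the identity $\Gamma(x'-y',x_n+y_n,t-s) = \Gamma(x - y^*, t-s)$ with $y^* = (y',-y_n)$, followed by the change of variables $z = y^*$, produces
\[
\Gamma^* * f(x,t) = \int_0^t e^{(t-s)\Delta} f^{\vee}(s)(x)\,ds,
\]
where $f^{\vee}(z,s) := f(z',-z_n,s)$ for $z_n < 0$ and $0$ otherwise, so $\|f^{\vee}(s)\|_{L^p({\mathbb R}^n)} = \|f(s)\|_{L^p(\R_+)}$. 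Since $\dot B^{\beta}_{pr}(\R_+)$ is a restriction space of $\dot B^{\beta}_{pr}({\mathbb R}^n)$, the restriction operator is contractive and it suffices to estimate both objects on the whole space.

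The main task is then the semigroup mapping estimate: for every $\tau > 0$, $g \in L^p({\mathbb R}^n)$, $\beta \in (0,1)$ and $1 \le r \le \infty$,
\[
\|\nabla e^{\tau\Delta} g\|_{\dot B^{\beta}_{pr}({\mathbb R}^n)} \le c\, \tau^{-\frac{1+\beta}{2}} \|g\|_{L^p({\mathbb R}^n)}.
\]
The cleanest route is real interpolation. Young's inequality with the Gaussian kernel gives the endpoint bounds $\|\nabla e^{\tau\Delta} g\|_{L^p} \le c\tau^{-1/2}\|g\|_{L^p}$ and $\|\nabla^2 e^{\tau\Delta} g\|_{L^p} \le c\tau^{-1}\|g\|_{L^p}$. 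The identification $(L^p({\mathbb R}^n), \dot W^{1,p}({\mathbb R}^n))_{\beta,r} = \dot B^{\beta}_{pr}({\mathbb R}^n)$ for $\beta \in (0,1)$, together with interpolation of the linear map $g \mapsto \nabla e^{\tau\Delta} g$ between the two endpoints, yields a constant bounded by $c(\tau^{-1/2})^{1-\beta}(\tau^{-1})^{\beta} = c\tau^{-(1+\beta)/2}$. An equivalent Littlewood--Paley argument runs as follows: Bernstein's inequality and the Gaussian decay of each dyadic block give $\|\phi_k * \nabla e^{\tau\Delta} g\|_{L^p} \le c\,2^k e^{-c 2^{2k}\tau}\|g\|_{L^p}$, and the $\ell^r$-sum weighted by $2^{k\beta}$ is geometric with peak at $2^k \sim \tau^{-1/2}$, producing the same power.

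With the full-space estimate in hand, Minkowski's inequality in $s$ concludes: for $\Gamma * f$,
\[
\|\nabla \Gamma * f(t)\|_{\dot B^{\beta}_{pr}(\R_+)} \le \int_0^t \|\nabla e^{(t-s)\Delta}\tilde f(s)\|_{\dot B^{\beta}_{pr}({\mathbb R}^n)}\,ds \le c\int_0^t (t-s)^{-\frac{1+\beta}{2}}\|f(s)\|_{L^p(\R_+)}\,ds,
\]
and an identical computation with $f^{\vee}$ in place of $\tilde f$ yields the matching estimate for $\Gamma^{*} * f$. Summing the two and absorbing constants gives the lemma. The main obstacle is the time-sliced Besov mapping property in the second step; once that is secured, the reductions in the first step and the Minkowski integration in the third step are essentially formal.
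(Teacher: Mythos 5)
Your proposal is correct and follows essentially the same route as the paper: zero-extend to the whole space, establish the time-sliced estimate $\|\nabla e^{\tau\Delta}g\|_{\dot B^{\beta}_{pr}}\le c\,\tau^{-(1+\beta)/2}\|g\|_{L^p}$ by real interpolation between the $k=1$ and $k=2$ Gaussian bounds, then integrate in $s$ by Minkowski. The only additions beyond the paper's text are that you spell out the reflection reduction for $\Gamma^{*}$ (which the paper dismisses as "treated similarly") and you offer a Littlewood--Paley alternative for the key interpolation step.
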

\begin{proof}
We prove only the case $\Ga *   f$, since the other case can be treated similarly. For $h \in L^p (\R)$, we have
%Because the proofs are exactly same, we only prove the case $\Ga *   f$.  For $h \in L^p (\R)$, we have
\begin{align*}
\|  \na^k  \Ga_t * h \|_{L^p (\R)} \leq  ct^{-\frac{k}2} \| h \|_{L^p (\R)} \quad 0 < t < \infty,
\end{align*}
where $k$ is a non-negative integer and
\begin{align*}
\Ga_t *   h(x) & = \int_{\R} \Ga(x -y, t) h(y) dy.
\end{align*}
Using the property of real interpolation between $k =1$ and $k =2$, we have
\begin{align*}
\|  \na  \Ga_t * h \|_{\dot B^{\be}_{pr} (\R)} \leq  ct^{-\frac{1}2 -\frac{\be}2} \| h \|_{L^p (\R)}.
\end{align*}
Let $\tilde f(t)$ be a zero extension of $f(t)$ over $\R$. Then, we have
\begin{align*}
\|\na \Ga *   f(t) \|_{\dot B^{\be }_{pr} (\R_+)} \leq \|\na \Ga *   \tilde f(t) \|_{\dot B^{\be }_{pr} (\R)}
 & \leq  c \int_0^t \|\na \Ga_{t-s} *   \tilde f(s) \|_{\dot B^{\be }_{pr} (\R)} ds\\
&  \leq  c \int_0^t (t -s)^{-\frac12 -\frac{\be}{2}  } \| \tilde f(s)\|_{L^p (\R)} ds\\
&  \leq  c \int_0^t (t -s)^{-\frac12 -\frac{\be}{2}  } \|  f(s)\|_{L^p (\R_+)} ds.
\end{align*}
Therefore, we complete the proof of Lemma \ref{lemma0803-1}.
%
%From the direct calculations, we have
%\begin{align*}
%\|\na^k \Ga *   f(t)\|_{L^p(\R_+)} + \|\na^k \Ga^* *   f(t)\|_{L^p(\R_+)} & \leq  \int_0^t  (t -s)^{-\frac{k}2} \|f(s)\|_{L^p (\R_+)} ds, \quad k \geq 0.
%\end{align*}
%Applying Lemma C.1 in \cite{CJ2}, we obtain Lemma \ref{lemma0803-1}.
\end{proof}

%{\color{red}{We note that the estimate
%\eqref{SS-max-10} is followed by similar proof of
%\cite{Seregin00}. In addition, it is also direct that solutions are H\"{o}lder continuous.
%Since its verification is tedious repetition of the argument in \cite[Proposition 2]{Seregin00}, we skip its details. Thus, it remains to show the estimate \eqref{SS-max-20}, that is, derivative is also H\"{o}lder continuous.}}
To prove Theorem \ref{Stokes-maximal}, we change the local problem into a problem in a half-space by multiplying a test function. Using Bogoski's formula to control non-divergence free term caused localization, we appropriately  decompose the solution to compute estimates of H\"{o}lder continuity.

Firstly,
let $\phi_1 \in C_c^\infty({\mathbb R}^{n})$ be a  cut-off function satisfying $ \phi_1 \geq 0$,  ${\rm supp} \, \phi_1 \subset B_\frac1{\sqrt{2}}$ and $\phi_1 \equiv 1 $ in $B_{\frac38}$. Also, let  and $\phi_2 \in C_c^\infty(-\infty, \infty)$ be a  cut-off function satisfying $ \phi_2 \geq 0$,  ${\rm supp} \, \phi_2 \subset (\frac12, 2)$ and $\phi_2 \equiv 1 $ in $(\frac3{4}, 1)$. Let $\phi (x,t) =\phi_1 (x) \phi_2 (t)$.
Let $U = u\phi$ and $\Pi = \pi \phi$ such that $U|_{Q^+_\frac14} =u$ and $\Pi|_{Q^+_\frac14} =\pi$. Then, $(U, \Pi)$ satisfies the following equations;
\[
U_t -\De U +\na \Pi = \tilde f, \quad {\rm div } \, U =h \qquad
\mbox{in}\,\, \R_+\times (0, 1),\]
\[
U\big|_{t =0} =0, \qquad U\big|_{x_n =0} =0,
\]
%\footnote{KK: Why is initial data zero?. Is the test fucntion time-dependent as well?}
where
\begin{align*}
\tilde f = -2 (\na u) \na \phi  - \De \phi u +\phi_t u + \pi \na \phi, \qquad
h = \na \phi \cdot u.
\end{align*}
We note that   $\tilde f, \,  h, \nabla  h, \, h_t \in L^q_tL^r_x(\R_+ \times (0, 1)), \, 1 < r < \infty$ with
\begin{align}\label{080202}
\notag\| \tilde f\|_{L^q_tL^r_x(\R_+ \times (0, 1)) } & \leq  c \big( \|  \na u \|_{L^q_tL^r_x (Q^+_\frac1{\sqrt{2}})} +  \| u \|_{L^q_tL^r_x(  Q^+_\frac1{\sqrt{2}})}   + \| \pi \|_{L^q_tL^r_x(Q^+_\frac1{\sqrt{2}}) }\big),\\
\notag \| h\|_{L^q_tL^r_x(\R_+ \times (0, 1)) } & \leq  c  \| u \|_{L^q_tL^r_x(Q^+_\frac1{\sqrt{2}}) },\\
\notag \| \na h \|_{L^q_tL^r_x(\R_+ \times (0, 1)) } & \leq  c  \big( \| u \|_{L^q_tL^r_x(Q^+_\frac1{\sqrt{2}}) } + \| \na u \|_{L^q_tL^r_x(Q^+_\frac1{\sqrt{2}}) } \big),\\
\notag \| h_t \|_{L^q_tL^r_x(\R_+ \times (0, 1))  } & \leq  c  \big( \| u \|_{L^q_tL^r_x(Q^+_\frac1{\sqrt{2}}) } + \| u_t \|_{L^q_tL^r_x(Q^+_\frac1{\sqrt{2}}) } \big)\\
  &  \leq  c  \big( \| u \|_{L^q_tL^r_x(Q^+_1) } + \| \na u \|_{L^q_tL^r_x(Q^+_1) }  + \| \pi \|_{L^q_tL^r_x(Q^+_1) }\big).
\end{align}
For the fifth inequality, we used Proposition 1 in \cite{Seregin00}.

%We recall that $G$ is represented by the following integral form
%\begin{align*}
%G(x,t) =  \int_{B_{1/2} \setminus B_{3/8}} N(x,y) g(y,t) dy,
%\end{align*}
%where $N=\partial_n E(x-y)+\partial_n E(x^*-y)$.
%\footnote{KK: Is it correct?}

Let $H (x,t)= \int_{\R_+} E(x, y) h(y,t) dy$ be a Bogoski's formula (see \cite{galdi}) such that
%\footnote{Here domain is half-space but iit seems later domain  is a ball}
\[
{\rm div } \, H(\cdot,t) = h(\cdot,t),\quad  \mbox{in}\,\, \R_+ \qquad \mbox{and}\qquad H (\cdot, t)\big|_{x_n =0} =0 \quad \mbox{on}\,\, \bke{x_n =0}.
\]
Since $h(t) \in W_0^{1,p}(\R_+)$ for all $  0 < t< \infty$, $H(t)$ satisfies
\begin{align}\label{0822-1}
%\norm{\na G}_{L^q_tL^r_x} & \leq c\norm{g}_{L^q_tL^r_x},\\
\notag\| \na^k H (t)\|_{L^r (\R_+)} & \leq c \| \na^{k -1} h (t) \|_{L^r (\R_+)} \leq c \| \na^{k -1} h (t) \|_{L^r (Q^+_{\frac1{\sqrt{2}}})}, \quad k= 1,\,2 \quad  1 < r < \infty,\\
\| H_t (t)   \|_{ L^{r^*} (\R_+)} & \leq c  \|  \nabla H_t (t)\|_{L^r (\R_+)}  \leq c \| h_t (t) \|_{ L^r (\R_+)} \leq c \| h_t (t) \|_{ L^r ( Q^+_{\frac1{\sqrt{2}}})}, %\leq c \| g_t\|_{L^q_t L^{r}_x}.
\end{align}
where $r^* = \frac{nr}{n-r}$ for $r< n$ (See Chapter 3 in \cite{galdi}). 
Take $ 1 <r < \infty$  and $0 < \ep$ such that $ \al +\frac{n}r +\ep  < 1$. Using the Besov imbedding and the property of real interpolation in \eqref{0822-1},  we obtain
%\footnote{KK: Is Besov space defined in the half-space? Why is $r$ and $\epsilon$ taken as such a way?  how about
%\[
%\| \na  G(t) \|_{ C^\al(\R_+)} \leq c \| G(t) \|_{ B^{1 + \al }_r (\R_+)} \leq c \| g(t) \|_{\dot B^{  \al  }_r (B_1)}  \leq c \| g(t) \|_{ C^{  \al } (B_1)}?
%\]
%}
\begin{align}\label{0802-2}
\| \na  H \|_{ L^\infty_t\dot C_x^\al(\R_+ \times (0, 1))} \leq c \| \na  H \|_{  L^\infty_t \dot  B^{ \al + \frac{n}r }_{rrx} (\R_+ \times (0, 1))} \leq c \| h \|_{ L^\infty_t\dot B^{  \al + \frac{n}r }_{rrx} (Q^+_\frac1{\sqrt{2}})}  \leq c \| h \|_{  L^\infty_t C^{  \al + \frac{n}r +\ep  }_x (Q^+_\frac1{\sqrt{2}})}.
\end{align}
%{\color{red}{
%Hence, we have
%\begin{align}\label{0802-2}
%\| \na  H \|_{L^\infty(0, 1; C^\al(\R_+)) }   \leq c \| h \|_{L^\infty (0,1; C^{  \al + \frac{n}r +\ep  } (B_1))}.
%\end{align}
%}}
Note that ${\rm supp} \, h \subset B_\frac1{\sqrt{2}} \setminus B_\frac38$.
Since $|E(x,y)| \leq c$ for $|x| <\frac14$ and $|y|> \frac38$, in case that $|x| < \frac14$, we observe that
\begin{align*}
|\na H(x,t) - \na H(x,s)|& \leq   \int_{B_{\frac1{\sqrt{2}}} \setminus B_{\frac38}} |E(x,y)  (h(y,t) - h(y,s))| dy\\
& \leq  \| h\|_{ \dot C_t^{\frac{\al}2}L^\infty_x(Q_\frac1{\sqrt{2}}^+)} |t -s|^{\frac{\al}2}.
\end{align*}
%Using the Sobolev inequality, for $r > n$, $\ep > 0$ with $\al > \frac{n}r +\ep $, we have
%\begin{align*}
%|\na G(x,t) - \na G(x,s)| \leq \|\na^2 ( G(t) - G(s)) \|_{L^r (\R_+)} \leq \|\na ( g(t) - g(s)) \|_{L^r (B_1)} \leq c \| g\|_{L^\infty(B_1;  C^{\frac{\al}2} (0, 1))}  |t -s|^{\frac{\al}2}.
%\end{align*}
Hence, we have
\begin{align*}
\| \na  H\|_{\dot C_t^{\frac{\al}2}L^\infty_x(Q_\frac14^+)} \leq  c\| h\|_{\dot C_t^{\frac{\al}2}L^\infty_x(Q_\frac1{\sqrt{2}}^+)}.
\end{align*}
Since $\al + \frac{n}r +\ep < 1$, from Proposition 2 and Lemma 1 in \cite{Seregin00}, there are $ 1 < q_0<2$ and $ 1< p_0 < p$ such that
\begin{align*}
  \| h \|_{L^\infty_t C_x^{  \al + \frac{n}r +\ep  } (Q^+_\frac12)}
  &\leq c\| u \|_{L^\infty_t C_x^{  \al + \frac{n}r +\ep  } (Q^+_\frac12)}\\
   &  \leq c \big( \| \na u \|_{L^{q_0}_tL^{p_0}_x (Q^+_\frac34)}  + \|  u \|_{L^{q_0}_tL^{p_0}_x (Q^+_\frac34) } + \| \pi \|_{L^{q_0}_tL^{p_0}_x(Q^+_\frac34) }     \big)\\
 &  \leq c \big( \| \na u \|_{L^{q}_tL^{p}_x (Q^+_1) }  + \|  u \|_{L^{q}_tL^{p}_x (Q^+_1)} + \| \pi \|_{L^{q}_tL^{p}_x (Q^+_1) }     \big).
\end{align*}
%{\color{red}{
%Since $\al + \frac{n}r +\ep < 1$, from proposition 2 and Lemma 1 in \cite{Seregin00}, there are $ 1 < q_0, \, p_0 < 2$ such that
%\begin{align*}
% \| h \|_{L^\infty (0,1; C^{  \al + \frac{n}r +\ep  } (B_1))} + \| h\|_{L^\infty(B_1^+;  C^{\frac{\al}2} (0, 1))}
%  &\leq c\| u \|_{L^\infty (0, 1; C^{  \al + \frac{n}r +\ep  } (B_1))} + \| u\|_{L^\infty(B_1^+;  C^{\frac{\al}2} (0, 1))}\\
%   &  \leq c \big( \| \na u \|_{L^{q_0}L^{p_0} }  + \|  u \|_{L^{q_0}L^{p_0} } + \| \pi \|_{L^{q_0}L^{p_0} }     \big)\\
% &  \leq c \big( \| \na u \|_{L^{q}L^{p} }  + \|  u \|_{L^{q}L^{p} } + \| \pi \|_{L^{q}L^{p} }     \big).
%\end{align*}
%}}
Summing up all estimates, we have
\begin{align}\label{timeregular}
\| \na  H \|_{  C^{\frac{\al}2}_t C^{\al}_x(Q_\frac14^+ )} \leq c \big( \| \na u \|_{L^{q}_tL^{p}_x (Q_1^+ )  }  + \|  u \|_{L^{q}_tL^{p}_x(Q_1^+ ) } + \| \pi \|_{L^{q}_tL^{p}_x (Q_1^+ ) }     \big).
\end{align}
We then decompose $U=H+W$  in $Q^+_{\frac14}$ such that $W$ solves
the following equations:
\[
W_t -\De W +\na \Pi = \tilde f_1, \quad {\rm div } \, W =0, \quad
\mbox{in}\quad Q_+,
\]
\[
W\big|_{t =0} =0, \qquad W|_{x_n =0} =0,
\]
where $Q_+ : = \R_+ \times (0, 1)$ and  $\tilde f_1 = \tilde f - H_t + \De H$.
If $n \leq p< r$, then  choose $p_1 < n$ such that $r = \frac{np_1}{n -p_1}$. Then, from \eqref{080202}, \eqref{0822-1}, Sobolev imbedding and  Proposition 1 in \cite{Seregin00}, we have
\begin{align}\label{0802-1}
\notag \| \tilde f_1 \|_{L^q_t   L^{r }_x (Q_+) }
& \leq c \big( \|  \na u \|_{L^q_tL^{r }_x(Q^+_{\frac1{\sqrt{2}}}) } +  \| u \|_{L^q_tL^r_x(Q^+_{\frac1{\sqrt{2}}}) }   + \| \pi \|_{L^q_tL^{r }_x(Q^+_{\frac1{\sqrt{2}}}) } + \| u_t \|_{L^q_tL^{p_1}_x(Q^+_{\frac1{\sqrt{2}}}) } \big)\\
\notag  &\leq c \big( \|  \na^2 u \|_{L^q_tL^{p_1}_x(Q^+_{\frac1{\sqrt{2}}}) }  +  \| \na u \|_{L^q_tL^{p_1}_x (Q^+_{\frac1{\sqrt{2}}})}     + \|  \na \pi \|_{L^q_tL^{p_1}_x (Q^+_{\frac1{\sqrt{2}}})} + \|   \pi \|_{L^q_tL^{p_1}_x(Q^+_{\frac1{\sqrt{2}}}) } \big)\\
\notag  &\leq c \big(  \| \na u \|_{L^q_tL^{p_1}_x (Q^+_1)}      + \|   \pi \|_{L^q_tL^{p_1}_x(Q^+_1) } + \| u \|_{L^q_tL^{p_1}_x (Q^+_1) } \big)\\
  &\leq c \big(  \| \na u \|_{L^q_tL^{p}_x (Q^+_1)}      + \|   \pi \|_{L^q_tL^{p}_x(Q^+_1) } + \| u \|_{L^q_tL^{p}_x (Q^+_1) } \big).
\end{align}
From well-known result (see Theorem 1.1 in \cite{So1}), we have
\begin{align*}
\| \na^2 W\|_{L^q_t   L^{r }_x(Q_+) } + \| D_t W\|_{L^q_t   L^{r }_x (Q_+ )} + \| \na \Pi \|_{L^q_t   L^{r }_x(Q_+)} &  \leq c \| \tilde f_1\|_{L^q_t   L^{r }_x(Q^+_{\frac1{\sqrt{2}}})}.
%&\leq c \big(  \| \na u \|_{L^q_tL^p_x(Q^+_1) }      + \|   \pi \|_{L^q_tL^p_x(Q^+_1) } + \| u \|_{L^q_tL^p_x(Q^+_1) } \big).
\end{align*}
Then, due to \eqref{080202}, \eqref{0822-1} and \eqref{0802-1}, we have
\begin{align*}
&  \| \na^2 u\|_{L^q_t L^{r }_x( Q_\frac14^+ )} + \| D_t u\|_{L^q_t L^{r }_x( Q_\frac14^+ )} + \| \na \pi \|_{L^q_t   L^{r }_x( Q_\frac14^+ )}\\
& \quad  = \| \na^2 U\|_{L^q_t L^{r }_x( Q_\frac14^+ )} + \| D_t U\|_{L^q_t L^{r }_x( Q_\frac14^+ )} + \| \na \Pi \|_{L^q_t   L^{r }_x( Q_\frac14^+ )}\\
%&\quad  = \| \na^2 W\|_{L^q_t   L^{p^* }_x( Q_\frac12^+ ) } + \| D_t W\|_{L^q_t   L^{p^* }_x ( Q_\frac12^+ )} + \| \na \Pi \|_{L^q_t   L^{p^* }_x( Q_\frac12^+ )}\\
& \quad \leq  c ( \| \tilde f_1 \|_{L^q_t L^{r }_x(Q_+)} + \|\na^2  H\|_{L^q_tL^{r }_x(Q^+_{\frac1{\sqrt{2}}})} +  \| H_t \|_{L^q_tL^{r }_x(Q^+_{\frac1{\sqrt{2}}}) } )\\
& \quad \leq  c ( \| \tilde f_1 \|_{L^q_t L^{r }_x(Q_+)} + \|\na  h\|_{L^q_tL^{p_1 }_x(Q^+_{\frac1{\sqrt{2}}})} +  \| h_t \|_{L^q_tL^{p_1 }_x(Q^+_{\frac1{\sqrt{2}}}) } )\\
& \quad \leq  c \big(  \| \na u \|_{L^q_tL^p_x ( Q_1^+ )}      + \|   \pi \|_{L^q_tL^p_x( Q_1^+ ) } + \| u \|_{L^q_tL^p_x( Q_1^+ ) } \big).
\end{align*}
Hence, we obtain  \eqref{SS-max-10} for $p \geq n$.

In case that $p < n$, we set $p^* = \frac{np}{n -p}$.  If $p^* > n$ then for the same reason as the case $p \geq n$,  we have
\begin{align*}
  &\| \na^2 u\|_{L^q_t L^{p^* }_x( Q_\frac14^+ )} + \| D_t u\|_{L^q_t L^{p^* }_x( Q_\frac14^+ )} + \| \na \pi \|_{L^q_t   L^{p^* }_x( Q_\frac14^+ )}
 \\
 \leq & c \big(  \| \na u \|_{L^q_tL^p_x ( Q_1^+ )}      + \|   \pi \|_{L^q_tL^p_x( Q_1^+ ) } + \| u \|_{L^q_tL^p_x( Q_1^+ ) } \big).
\end{align*}
If $p^* < n$, then iterating the  process up to $p^* > n$, we obtain  \eqref{SS-max-10}.

Using Proposition \ref{prop0803-1} and the property of real interpolation, for $  1 < r_1 < \infty, \,\,  1 \leq  r_2 \leq \infty$ and $\al > 0$,  $W$  holds the following estimate
\begin{align}\label{apendix1}
\|  W(t) \|_{B^{\al}_{r_1r_2} (\R_+)} \leq c \big( \|\Ga * {\mathbb P} \tilde f_1(t) \|_{B^{\al}_{r_1r_2} (\R_+)}  +  \|\Ga^* * {\mathbb P} \tilde f_1(t) \|_{B^\al_{r_1r_2} (\R_+)} \big).
\end{align}
%where
%\begin{align*}
%\Ga * {\mathbb P} f(x,t) & = \int_0^t \int_{\R_+} \Ga(x -y, t -s) {\mathbb P}f(y,s) dyds,\\
%\Ga^* *  {\mathbb P}f(x,t)& = \int_0^t \int_{\R_+} \Ga(x' -y', x_n + y_n , t -s) {\mathbb P}f(y,s) dyds.
%\end{align*}
From the Besov imbedding, \eqref{apendix1} and  Lemma \ref{lemma0803-1}, we have
\begin{align}\label{apendix2}
 \|\na W(t) \|_{\dot C^{\al } (\R_+)}   & \leq c \|\na W(t) \|_{\dot B^{\al +\frac{n}r }_{rr} (\R_+)}  %+  \| \na \Ga^* * {\mathbb P } f(t) \|_{\dot B^{\al + \frac{n}r }_r (\R_+)}  %\nonumber\\
 \leq  c \int_0^t (t -s)^{-\frac12 -\frac{\al}{2} -\frac{n}{2r}} \| \tilde f_1(s)\|_{L^r (\R_+)} ds,
\end{align}
\begin{align}\label{apendix2-2}
 \notag\|\na W(t) \|_{L^\infty (\R_+)}
  \leq  \|\na W(t) \|_{\dot B^{\frac{n}r}_{r1} (\R_+)}
\notag & \leq  c \int_0^t (t -s)^{-\frac12  -\frac{n}{2r}} \| \tilde f_1(s)\|_{L^r (\R_+)} ds\\
 &\leq  c \int_0^t (t -s)^{-\frac12 -\frac{\al}{2} -\frac{n}{2r}} \| \tilde f_1(s)\|_{L^r (\R_+)} ds.
\end{align}
Combining \eqref{0802-1},  \eqref{apendix1}, \eqref{apendix2} and \eqref{apendix2-2} and H\"{o}lder inequality, and taking $ r < \infty$  satisfying $\frac{n}r +\frac2q   < 1 -\al$, we obtain
\begin{align}\label{0524-4}
 \| \na W\|_{L^\infty_t C_x^\al(Q_+)} &\leq c\| \tilde f_1 \|_{L^q_t L^r_x(Q_+)}
\leq  c \big(  \| \na u \|_{L^q_tL^p_x(Q^+_1) }      + \|   \pi \|_{L^q_tL^p_x (Q^+_1) } + \| u \|_{L^q_tL^p_x (Q^+_1) } \big).
\end{align}
Next, we compute H\"{o}lder continuous estimate with respect to $t$. From \cite{So1},
%{\color{red}$\bullet$}\footnote{it may be better to give fhe formula for $K^{{\mathcal P}}$.}
%{\color{red}$\bullet$}\footnote{We may use the estimate of unrestrictive Green tensor.}
there is the kernel $K^{{\mathcal P}}$ such that  $W$ is represented by
\begin{align}\label{0909-1}
W(x,t) = \int_0^t \int_{\R_+} K^{{\mathcal P}}(x,y, t-s) {\mathbb P} \tilde f_1(y,s) dyds,
\end{align}
where
\begin{align*}
K^{{\mathcal P}}_{ij}(x,y, t) &=\de_{ij} \big(\Ga(x-y,t) - \Ga(x -y^*,t) \big)\\
& \qquad  + 4(1 -\de_{jn}) D_{x_j} \int_0^{x_n} \int_{\Rn}\Ga(x -y^* -z, t) D_{z_i} N(z) dz.
\end{align*}
Furthermore, it is known that  $K^{{\mathcal P}}$  satisfies that for all $ k \in {\mathbb N} \cup \{0\}$, $l= (l', l_n) \in ({\mathbb N} \cup \{0\})^n$, (see Proposition 2.5 in \cite{So1})
\begin{align}\label{0801-1}
|D_t^k D_{x_n}^{l_n}D_{x'}^{l'} K^{{\mathcal P}}(x,y,t)| \leq \frac{ce^{-c_1 \frac{ y_n^2}t}}{ t^k (t + x_n^2)^{\frac{l_n}2} ( |x -y^*|^2 + t )^{\frac{n + |l'| }2}  }.
\end{align}
We remark that one can also refer \cite{KLLT21-G} for the representation formula via unrestricted Green tensor. Since
we use $L^p$-type estimate of $f$, not the pointwise estimate of $f$, the formula \eqref{0909-1} with the restricted Green tensor $K^{{\mathcal P}}$ is enough for our purpose.
Continuing computations, it follows that
\begin{align}\label{0707-1}
\notag\na_x W(x,t) -\na_x W(x,s)& = \int_0^s \int_{\R_+} (\na_x K^{{\mathcal P}}(x,y,t-\tau) -\na_x K^{{\mathcal P}}(x,y, s-\tau)) {\mathbb P} \tilde f_1(y,\tau) dyd\tau\\
\notag & \quad + \int_s^t \int_{\R_+} \na_xK^{{\mathcal P}}(x,y, t-\tau) {\mathbb P} \tilde f_1(y,\tau) dyd\tau\\
&: = I_1 + I_2.
\end{align}
Using H\"{o}lder inequality, we first estimate $I_2$
\begin{align*}
|I_2| \leq \int_s^t (\int_{\R_+} |\na_xK^{{\mathcal P}}(x,y, t-\tau)|^{r'}dy)^{\frac1{r'}}  \|{\mathbb P} \tilde f_1(\tau)\|_{L^r(\R_+)} d\tau.
\end{align*}
It follows from \eqref{0801-1} that
\begin{align*}
\int_{\R_+} |\na_xK^{{\mathcal P}}(x,y, t-\tau)|^{r'}dy
\leq &  c\int_{\R_+} (|x -y^*|^2 + (t -\tau))^{-\frac{n r'}2} ( x_n ^2 + t-\tau)^{-\frac{r'}2}  e^{-c_1\frac{y_n^2}{t -\tau}}  dy\\
 \leq   & c \int_0^\infty   ( x_n^2 + t-\tau)^{-\frac{r'}2 - \frac{nr'}2 +\frac{n-1}2 } e^{-c_1\frac{y_n^2}{t -\tau}}   dy_n\\
    = &     (   t-r)^{- \frac{r'}2  - \frac{nr'}2 +\frac{n}2 } .
\end{align*}
Hence, for $1 < r< \infty$ satisfying $1 > \frac{n}r + \frac2q$, we have
\begin{align}\label{0707-2}
\notag |I_2|& \leq c\int_s^t (t -\tau)^{-\frac12 -\frac{n}{2r}} \|\tilde f_1(\tau)\|_{L^r(\R_+)} d\tau\\
\notag & \leq  c(\int_s^t (t -\tau)^{(-\frac12 -\frac{n}{2r})q'} d \tau )^{\frac1{q'} }  \|\tilde f_1\|_{L^q_tL^r_x(Q_+) }\\
&  =    c(t -s)^{\frac12 -\frac{n}{2r} -\frac1q}   \|\tilde f_1\|_{L^q_tL^r_x(Q_+) }.
\end{align}

Next, we  estimate   $I_1$. Using H\"{o}lder inequality, it follows that
\begin{align*}
I_1 &= c\int_0^s \int_0^{t -s}  \int_{\R_+}   D_t \na_x K^{{\mathcal P}}(x,y,\eta + s-\tau)   {\mathbb P} \tilde f_1(y,\tau) dy  d \eta d\tau\\
& \leq c\int_0^s \int_0^{t-s} (\int_{\R_+} | D_t \na_x K^{{\mathcal P}}(x,y,\eta + s-\tau)|^{r'}dy)^{\frac1{r'}}  \|\tilde f_1(\tau)\|_{L^r(\R_+)} d\eta d\tau.
\end{align*}
Due to \eqref{0801-1}, we have
\begin{align*}
&\int_{\R_+} | D_t \na_x K^{{\mathcal P}}(x,y,\eta + s-\tau)|^{r'}dy \\
\leq   &c\int_{\R_+} \frac{e^{-c_1\frac{y_n^2}{\eta + s -\tau}}}{ (\eta + s-\tau)^{ r' }(|x -y|^2 + \eta + s-r)^{\frac{nr'}2} ( x_n ^2 + \eta + s-\tau)^\frac{r'}2}   dy\\
\leq   &c\int_0^\infty \frac{e^{-c_1\frac{y_n^2}{\eta + s -\tau}}}{ (\eta + s-\tau)^{ r' } ( x_n^2 + \eta + s-\tau)^{\frac{r'}2 + \frac{nr'}2 -\frac{n-1}2}}   dy_n\\
\leq  &   \frac{c}{  (\eta + s-\tau)^{ (\frac32 +\frac{n}{2r})r' }}.
\end{align*}
Using change of variable, we have
\begin{align*}
\int_0^{t-s} (\eta + s-\tau)^{-\frac32 -\frac{n}{2r}}  d\eta
& = (s -\tau)^{-\frac12 -\frac{n}{2r}} \int_0^{\frac{t-s}{s-\tau}} (\eta + 1)^{-\frac32 -\frac{n}{2r}}  d\eta\\
& \leq \left\{\begin{array}{lc}
c(t-s)(s-\tau)^{-\frac12 -\frac{n}{2r}} \quad &\mbox{if}\,\,\, \tau < 2s -t,\\
c(s-\tau)^{-\frac12 -\frac{n}{2r}} \quad &\mbox{if}\,\,\, \tau > 2s -t.
\end{array}
\right.
\end{align*}
We note that  for $1 > \frac{n}r + \frac2q$
\begin{align*}
(t-s) (\int_0^{2s-t}  ( s-\tau)^{(-\frac12 -\frac{n}{2r})q'}   d\tau)^{\frac1{q'}}
 \leq c(t-s)^{\frac32 -\frac{n}{2r} -\frac1q},\\
 (\int_{2s-t}^s  ( s-\tau)^{(-\frac12 -\frac{n}{2r})q'}   d\tau)^{\frac1{q'}}
 \leq c(t-s)^{\frac12 -\frac{n}{2r} -\frac1q}.
\end{align*}
Hence, we have
\begin{align}\label{0524-3}
|I_1|& \leq c   (t -s)^{\frac12 -\frac{n}{2r} -\frac2q}   \|\tilde f_1\|_{L^q_tL^r_x(Q_+)}.
\end{align}
Due to \eqref{0707-1}, \eqref{0707-2} and \eqref{0524-3}, for $\al \leq \frac12 -\frac{n}{2r} -\frac2q$, it follows that
\begin{align}\label{0524-5}
\| W\|_{L^\infty_x \dot C_t^{\frac{\al}2} (Q^+_\frac14)} \leq  \|\tilde f_1\|_{L^q_tL^r_x(Q_+)}.
\end{align}
Combining \eqref{0524-4} and \eqref{0524-5}, we obtain
\begin{align}\label{spaceregular}
\| W\|_{ C_t^{ \frac{\al}2} C_x^{\al} ( Q^+_\frac14 )} \leq  \|\tilde f_1\|_{L^q_tL^r_x(Q_+)} \leq  c \big(  \| \na u \|_{L^q_tL^p_x(Q^+_1) }      + \|   \pi \|_{L^q_tL^p_x(Q^+_1) } + \| u \|_{L^q_tL^p_x (Q^+_1)} \big).
\end{align}
Since $ U = W + H$ and $u = U$ in $Q^+_\frac14$, we have via \eqref{timeregular} and \eqref{spaceregular}
\begin{align*}
&\| \na u\|_{C^{\frac{\al}2}_t C^{\al}_x (Q^+_\frac14)} = \| \na U\|_{ C^{\frac{\al}2}_t C^{\al}_x (Q^+_\frac14)}\\
&\leq  c ( \| \tilde f_1 \|_{L^q_t L^r_x(Q_+)} + \| h \|_{L^\infty_t C^{  \al + \frac{n}r  }_x(Q^+_\frac1{\sqrt{2}})} +  \| h\|_{C_t^{\frac{\al}2}L^\infty(Q^+_\frac1{\sqrt{2}})})\\
& \leq  c \big(  \| \na u \|_{L^q_tL^p_x(Q_1) }      + \|   \pi \|_{L^q_tL^p_x(Q^+_1) } + \| u \|_{L^q_tL^p_x(Q^+_1) } \big) + c\big(\| u \|_{L^\infty_t C^{  \al + \frac{n}r  }_x (Q^+_\frac1{\sqrt{2}})} +  \| u\|_{   C^{\frac{\al}2}_t L^\infty_x (Q^+_\frac1{\sqrt{2}})}  \big).
\end{align*}
Due to Proposition 2 and Lemma 1 in \cite{Seregin00}, there are $1 < q_0<2$ and $ 1 <  p_0 < p$ such that
\begin{align*}
\| u \|_{L^\infty_t C_x^{  \al + \frac{n}r}(Q^+_\frac14)  } +  \| u\|_{C_t^{\frac{\al}2}L_t^\infty(Q^+_\frac14)  }
& \leq c \big(  \| \na u \|_{L^{q_0}_tL^{p_0}_x (Q^+_1)}      + \|   \pi \|_{L^{q_0}_tL^{p_0}_x(Q^+_1) } + \| u \|_{L^{q_0}_tL^{p_0}_x(Q^+_1) } \big)\\
& \leq c \big(  \| \na u \|_{L^{q}_tL^{p}_x(Q^+_1) }      + \|   \pi \|_{L^{q}_tL^{p}_x(Q^+_1) } + \| u \|_{L^{q}_tL^{p}_x (Q^+_1)} \big).
\end{align*}
Hence, we obtain \eqref{SS-max-20}. This completes the proof of Theorem \ref{Stokes-maximal}.
\qed

%%%%%%%%%%%%%%%%%%%%%%%%%%%%%%%%%%%%%%%%%%%%%%%%%
%%%%%%%%%%%%%%%%%%%%%%%%%%%%%%%%%%%%%%%%%%%%%%%%%
%%%%%%%%%%%%%%%%%%%%%%%%%%%%%%%%%%%%%%%%%%%%%%%%%

\section{Proof of Theorem \ref{maintheorempressure-SS} for Stokes equations}
\label{proofss}
\setcounter{equation}{0}

Let $g$ be a vector field defined in \eqref{0502-6} such that
\begin{align}\label{1222-2}
\notag g_n^{\calS} \in C_c^\infty (B'_2 \setminus B'_\frac32) , \quad    g_n^{\calT} \in C(0, 1) \quad \mbox{ with} \quad  g_n^{\calT}(0) =0, \quad  D_t g_n^{\calT} \in L^q (0, 1),\\
   D_t g_n^{\calT} \notin L^{r} (\frac34, 1) \quad \mbox{ for all} \quad  r>q, \quad g_n^{\calT} \notin B^{1 -\frac1{2r}}_{rr} (\frac34,\frac78) \quad \mbox{ for all} \quad  r>\frac{3q}2.
\end{align}
\begin{rem}\label{example-0924}
An example of $g_n^{\calT}$ satisfying \eqref{1222-2} is the following:
\begin{align*}
 g_n^{\calT} (t) = \left\{\begin{array}{l}
 0, \quad 0 < t \leq  \frac34, \vspace{2mm}\\
   (t -\frac3{4})^{1 -\frac1q} \ln (t-\frac3{4})^{-1} \quad \frac3{4} < t < \frac78.
 \end{array}
 \right.
\end{align*}
%where $\eta \in C^\infty_c(0, \frac32)$ satisfying $ 0 \leq \eta \leq 1$ and $\eta= 1$ in $(\frac3{4}, \frac32)$.
In Appendix \ref{appendix0131}, for clarity, we give its details.
\end{rem}

Let $\phi (x,t) = \displaystyle c_n \int_{\Rn} \frac1{|x-y'|^{n-2}} g_n(y',t) dy'  $. We define  $w^{{\mathcal H}} = \na \phi$ and $p^{{\mathcal H}}(x,t) =  - D_t \phi (x,t)$. Then, we note that $(w^{{\mathcal H}},p^{{\mathcal H}})$ is the solution of the Stokes equations in $\R_+ \times (0, \infty)$
\begin{align}\label{boundaryg}
\left\{\begin{array}{l}
w^{{\mathcal H}}_t -\De w^{{\mathcal H}} + \na p^{{\mathcal H}} =0, \qquad
%(x,t) \in \R_+ \times (0, \infty),\\
{\rm div }\,\, w^{{\mathcal H}} =0,\\
\vspace{-2mm}\\
w^{{\mathcal H}}|_{t =0} =0, \qquad w^{{\mathcal H}}|_{x_n =0} = (R'_1 g_n, \cdot , R'_{n-1} g_n, g_n).
\end{array}
\right.
\end{align}
%where $R' = (R'_1, R'_2, \cdots, R'_{n-1})$ is $n-1$ dimensional Riesz transform.
We set $G = g  - w^{{\mathcal H}}|_{x_n =0} = (-R'_1 g_n, \cdots , -R'_{n-1} g_n, 0)$. %so that $G_n =0$.
Let $(w^{{\mathcal S}}, p^{{\mathcal S}})$  be a solution of the following equations in $\R_+ \times (0, \infty)$
\begin{align}\label{boundaryG}
\left\{
\begin{array}{l}
w^{{\mathcal S}}_{t} -\De w^{{\mathcal S}} +\na p^{{\mathcal S}} =0,\qquad
{\rm div  }\, w^{{\mathcal S}} =0,\\
\vspace{-2mm}\\
w^{{\mathcal S}}|_{t =0} =0, \qquad w^{{\mathcal S}}|_{x_n =0} = G.
\end{array}
\right.
\end{align}
Then, $ (w^{{\mathcal H}} + w^{{\mathcal S}}, p^{{\mathcal H}} + p^{{\mathcal S}})$ is  solution of \eqref{Stokes-bvp-200} with external force $f =0$.

Since for $x' \in B^{'}_1$ and $y'  \in B'_2 \setminus B'_\frac32 $, $|x' - y'| \geq \frac12$,
 we have
 $$
 \abs{D_x^k\int_{\Rn} \frac1{|x-y'|^{n-2}} g^{\mathcal S}_n(y') dy'} \leq c \|g^{\mathcal S}_n\|_{L^\infty (B'_2 \setminus B'_\frac32)}
 $$
 for $x^{'} \in B^{'}_1$ and $k \geq 0$.
We note that for $1 \leq   \te \leq  \infty$ and $0 < r < 1$,
\begin{align}\label{0310-1}
\notag\|D_x^2 w^{{\mathcal H}}\|_{L^\te( Q^+_r ) } & = \| g_n^{\mathcal T}\|_{L^\te (1 -r^2, 1)} \| D_x^3\int_{\Rn} \frac1{|x-y'|^{n-2}} g^{\mathcal S}_n(y') dy'\|_{L^\te( B^+_r)},\\
\notag \|D_t w^{{\mathcal H}}\|_{L^\te( Q^+_r ) } & = \| D_t g_n^{\mathcal T}\|_{L^\te (1-r^2,1)} \| D_x^1\int_{\Rn} \frac1{|x-y'|^{n-2}} g^{\mathcal S}_n(y') dy'\|_{L^\te( B^+_r)},\\
  % \leq  \| D_t g_n^2\|_{L^p (0, 1)}\| g_n^1\|_{L^\infty(A)},\\
 \|D_x p^{{\mathcal H}}\|_{L^\te( Q^+_r ) } & = \| D_t g_n^{\mathcal T}\|_{L^\te (1-r^2,1)} \| D_x^1\int_{\Rn} \frac1{|x-y'|^{n-2}} g^{\mathcal S}_n(y') dy'\|_{L^\te( B^+_r)}.
 %  \leq  \| D_t g_n^2\|_{L^p (0, 1)}\| g_n^1\|_{L^\infty(A)}.
 \end{align}
Since $\| g_n^{\mathcal T}\|_{\dot B^{1 -\frac1{3q}}_{\frac{3q}2 \frac{3q}2}({\mathbb R})} \leq \| g_n^{\mathcal T}\|_{W^1_q({\mathbb R})} < \infty$, we have
\begin{align*}
\| G \|_{\dot B^{2 -\frac{2}{3q}, 1 -\frac1{3q}}_{\frac{3q}2 \frac{3q}2} (\Rn \times {\mathbb R})}
&\leq  c \| g_n \|_{\dot B^{2 -\frac{2}{3q}, 1 -\frac1{3q}}_{\frac{3q}2 \frac{3q}2} (\Rn \times {\mathbb R})} \\
&\leq \| g^{\mathcal S}_n \|_{L^{\frac{3q}2}(\Rn)} \|g_n^{\mathcal T}\|_{\dot B^{1 -\frac1{3q}}_{\frac{3q}2 \frac{3q}2} ({\mathbb R})} + \| g_n^{\mathcal S} \|_{\dot B^{2 -\frac2{3q}}_{\frac{3q}2 \frac{3q}2}(\Rn)}\| g_n^{\mathcal T}\|_{L^{\frac{3q}2} ({\mathbb R})} \\
& < \infty.
\end{align*}
From \cite{LS}, we obtain
\begin{align}\label{0310-2}
 \| D_t w^{{\mathcal S}} \|_{L^{\frac{3q}2}((Q^+_1)}+\| D^2_x w^{{\mathcal S}} \|_{L^{{\frac{3q}2} }((Q^+_1)}+\| D_x p^{{\mathcal S}} \|_{L^{\frac{3q}2}((Q^+_1))}   \leq  c \| G \|_{\dot B^{2 -\frac{2}{3q}, 1 -\frac1{3q}}_{\frac{3q}2 \frac{3q}2} (\Rn \times {\mathbb R})}
%&  \leq  c \| g \|_{ W^{2, 1 }_{pp} (\Rn \times (0, 1)}\\
  < \infty.
\end{align}
Combining  \eqref{0310-1}  and \eqref{0310-2}, we obtain
\begin{align*}
\| \na p\|_{L^q (Q^+_1)}+\| D_t w \|_{L^{q}(Q^+_1)}+\| D^2_x w \|_{L^{\frac{3q}2}(Q^+_1)}  < \infty,\\
\| \na p\|_{L^{r_1} (Q^+_\frac12)} =\infty, \quad  \| D_t w\|_{L^{r_1} (Q^+_\frac12)} = \infty \quad r_1 > q.
\end{align*}
For the last result in \eqref{0711-2}, we use the  decomposition of $w_1 = w_1^{ {\mathcal L}}+ w_1^{ {\mathcal B}} + w_1^{ {\mathcal N}}$ defined in Proposition \ref{lemma0406}.
We note that   for $(x, t) \in B(0, 1) \times (0,
1) $ it is direct that
\begin{align}\label{0927-1}
|\na^2 w_1^{\calN}(x,t)| \leq c\| g_n\|_{L^\infty}.
\end{align}
From \eqref{0927-1} and \eqref{0706-8}, for $1 < r < \infty$, we have
\begin{align}\label{0706-9-2}
\| \na^2( w_1 - w_1^{ {\mathcal B}} )\| _{L^r (Q_1^+)} \leq \| g\|_{L^\infty}.
\end{align}
We note that
\begin{align*}
   D_{x_n} D_{x_n}w^{\calB}_1(x,t) &=   c_n  \int_{0}^t \int_{\Rn} D_s  g_n(y', s) \frac{x_n}{(t -s)^{\frac{n+2}2}}e^{-\frac{x_n^2}{t-s}}
\int_{\Rn} e^{-\frac{|x'-y'-z'|^2}{t-s}}    \frac{z_1}{|z'|^n}   dz' dy' ds\\
& \quad  -  c_n  \int_{0}^t \int_{\Rn} \De'  g_n(y', s) \frac{x_n}{(t -s)^{\frac{n+2}2}}e^{-\frac{x_n^2}{t-s}}
\int_{\Rn} e^{-\frac{|x'-y'-z'|^2}{t-s}}    \frac{z_1}{|z'|^n}   dz'\\
& : = I_1 + I_2.
\end{align*}
Firstly, it follows from \eqref{w12-1} that, for $ |x'| \leq 1$,
\begin{align*}
| I_2(x,t)| \leq  c \| \na_{x'}^2  g_n \|_{L^\infty} < \infty.
\end{align*}
Using Lemma \ref{lemma0709-1}, we divide $I_1 = I_{11} + I_{12}$, where
\begin{align*}
I_{11}(x,t) &  = c_n\int_{0}^t D_{x_n} \Ga_1 (x_n, t-s)D_s  g_n^{\mathcal T}( s)  ds \psi(x'),\\
I_{12}(x,t) & = c_n\int_{0}^t D_{x_n} \Ga_1 (x_n, t-s)D_s  g_n^{\mathcal T}( s) \int_{\Rn} g_n^{\mathcal S}(y') J(x' -y', t-s)dy'  ds,
\end{align*}
where $\psi(x') = \int_{\Rn} \frac{x_1 -y_1}{|x' -y'|^n} g_n^{{\mathcal S}}(y') dy'   $.
We consider first $I_{12}$.
Noting that
\begin{align*}
|I_{12} (x,t)| \leq c \int_0^t \frac{x_n}{t-s} e^{-\frac{x_n^2}{t-s}} |D_s  g_n^{\mathcal T}( s) | ds \|g_n^{\mathcal S}\|_{L^\infty}.
\end{align*}
%Hence, we have
%\begin{align*}
%\|  D_{x_k} D_{x_l}w^{\calB}_1 \|_{L^\infty} \leq \| D_{y'}^2 g_n\|_{L^\infty},\\
%\|  D_{x_k} D_{x_n}w^{\calB,2}_1 \|_{L^\infty} \leq \| D_{y'} g_n\|_{L^\infty}
%\end{align*}
%
%
%Note that
%\begin{align}\label{0620-2}
%\notag  D_{x_k} D_{x_n} w^{\calB,1}_1 (x,t) &  =  c \int_0^t D_{x_n} D_{x_n} \Ga_1(x_n, t-s)   g_n^{\mathcal T}(s) ds D_{x_k}\psi(x')\\
%& =  c \Ga_1 * D_t g_n^{{\mathcal T}} (x_n, t)   \psi_i(x').
%\end{align}
%From \eqref{0620-2},  for $1 \leq k, \, l \leq n-1$, we have
%\begin{align*}
%\|D_{x_k} D_{x_l} w^{313}_i\|_{L^\infty (B_1 \times (0, 1))} \leq c \| g_n^{{\mathcal T}} \|_{L^\infty(B_1 \times (0, 1))}.
%\end{align*}
Using the integral Minkowski's inequality, we have
\begin{align*}
\bke{\int_{B_1} |I_{12} (x,t)|^{p_0} dx}^\frac1{p_0}
& \leq c \int_0^t \frac1{t -s } |  D_sg^{{\mathcal T}}_n(s)| \bke{ \int_0^1 x_n^{p_0} e^{-\frac{x_n^2}{t-s}} dx_n}^{\frac1{p_0}} ds \|g_n^{\mathcal S}\|_{L^\infty}\\
& \leq c \int_0^t  (t -s)^{-\frac12 + \frac1{2p_0}} | D_s g^{{\mathcal T}}_n(s)| ds \|g_n^{\mathcal S}\|_{L^\infty}.
\end{align*}
%Hence,  for $\frac1{p_0} +1 = \frac1{p} +\frac12 -\frac1{2p_0} ( %p_0 = \frac{3p}{2 -p})$,
Due to Sobolev-Littlewood-Hardy inequality and Holder inequality, we obtain
\begin{align*}
\| I_{12}\|_{L^{p_0} (Q_1^+)} & \leq  c\| D_t g_n^{{\mathcal T}}  \|_{L^{q}} \|g_n^{\mathcal S}\|_{L^\infty}, \quad 1 < q < 2,\quad p_0 = \frac{3q}{2 -q},\\
\| I_{12} \|_{L^\infty_t L^{p_0}_x (Q_1^+ ))} & \leq  c\| D_t g_n^{{\mathcal T}}  \|_{L^{q}} \|g_n^{\mathcal S}\|_{L^\infty}, \quad 2 \leq q < \infty, \quad p_0 > q.
\end{align*}
On the other hand, since $\int_0^tD_{x_n} \Ga_1(x_n, t-s) g_n^{{\mathcal T}}(s) ds|_{x_n =0} = g_n^{{\mathcal T}}(t)$,
from the result of boundary value problem of heat equation and trace theorem in half space ((3) of Proposition \ref{prop2}), we have that for $ 1 < r < \infty$,
\begin{align}\label{0710-4}
\notag 2\| D_t D_{x_n}  \Ga_1 * g_n^{{\mathcal T}} \|_{ L^{r}({\mathbb R} \times (0, \infty))} &  = \| D_t D_{x_n} \Ga_1 * g_n^{{\mathcal T}} \|_{ L^{r} ({\mathbb R} \times (0, \infty))} +
\| D^3_{x_n}  \Ga_1 * g_n^{{\mathcal T}}\|_{ L^{r}({\mathbb R} \times (0, \infty))} \\
 & \geq c  \| g_n^{{\mathcal T}} \|_{\dot B^{ 1 -\frac1{2r}}_{r r} ({\mathbb R})}.
\end{align}
Since $\| g_n^{{\mathcal T}} \|_{\dot B^{ 1 -\frac1{2r_2}}_{r_2 r_2} ({\mathbb R})}  = \infty$ for $\frac{3q}2 < r_2$,
it follows from \eqref{0710-3}, \eqref{0710-2},\eqref{0710-1}, \eqref{0730-3-1} and \eqref{0710-4} that
\begin{align}
\|I_{11}\|_{L^{r_2} (Q^+_1)} & \geq c  \| g_n^{{\mathcal T}} \|_{\dot B^{ 1 -\frac1{2r_2}}_{r_2 r_2} ({\mathbb R})} \|\psi\|_{L^{r_2} ( B'_\frac12)}-c = \infty.
\end{align}
Thus, we complete the proof of Theorem \ref{maintheorempressure-SS}.
\qed

Following similar computations in the proof of Theorem \ref{maintheorempressure-SS}, we obtain the following corollary. Although its verification is not difficult, for clarity, we present its details.
%{\color{red}$\bullet$}\footnote{ the proof of the corollary needs to be modified.}
\begin{coro}\label{lemm1222-1}
Let $1 < q< \infty$ and   $ g_n^{\mathcal S}$ and $g_n^{\mathcal T}$ satisfy the assumption \eqref{1222-2}. Let $p_0 = \frac{3q}{2 -q}$ if $1 < q < 2$ and $p_0$ be a any number larger than $q$ if $q \geq 2$. Then, the constructed solution $w$ of the Stokes equations \eqref{Stokes-10}  satisfies
\begin{align}\label{coro0927-1}
\| D_x w \|_{L^{p_0}(Q^+_1)}  < \infty.
\end{align}
In case that $1 < q <2$, if $g_n^{\mathcal T}$  is taken as the function in Remark \ref{example-0924} and if $r> p_0$, then
\begin{align}\label{coro0923-1}
\| D_x w \|_{L^{r}((Q^+_1)}  = \infty.
\end{align}
\end{coro}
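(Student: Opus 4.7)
The plan is to reuse the decomposition machinery from the proof of Theorem \ref{maintheorempressure-SS} at the level of $\nabla w$ rather than $\nabla^2 w$ and $D_t w$. Split $w = w^{{\mathcal H}} + w^{{\mathcal S}}$ as in \eqref{boundaryg}--\eqref{boundaryG}. Since $|x - y'|^{-(n-2)}$ is smooth for $x' \in B'_1$ and $y' \in {\rm supp}\, g_n^{{\mathcal S}} \subset B'_2 \setminus B'_{3/2}$, one gets $\|\nabla w^{{\mathcal H}}\|_{L^\infty(Q^+_1)} \leq c\|g_n^{{\mathcal T}}\|_{L^\infty}$, so the burden falls on $\nabla w^{{\mathcal S}}$. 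Using the splitting $w^{{\mathcal S}}_1 = w^{{\mathcal L}}_1 + w^{{\mathcal B}}_1 + w^{{\mathcal N}}_1$ from the proof of Proposition \ref{lemma0406} together with Lemma \ref{lemm0706-1} and \eqref{0706-9}, the pieces $\nabla w^{{\mathcal L}}_1$ and $\nabla w^{{\mathcal N}}_1$ belong to $L^r$ for every $r<\infty$, and the same applies to the tangential derivative $D_{x'} w^{{\mathcal B}}_1$ via \eqref{w12-1}--\eqref{0710-2}. The only potentially singular piece is therefore $D_{x_n} w^{{\mathcal B},1}_1(x,t) = c_n\bigl[\int_0^t D^2_{x_n}\Gamma_1(x_n, t-s) g_n^{{\mathcal T}}(s)\,ds\bigr] \psi(x')$, with $\psi$ as in \eqref{0710-1}.

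Using $D^2_{x_n}\Gamma_1 = -\partial_s \Gamma_1$ and integrating by parts (the boundary terms vanish because $g_n^{{\mathcal T}}(0) = 0$ and $\Gamma_1(x_n, 0^+) = 0$ for $x_n > 0$) gives
\[
D_{x_n} w^{{\mathcal B},1}_1(x,t) = c_n \bigg[\int_0^t \Gamma_1(x_n, t-s)\, D_s g_n^{{\mathcal T}}(s)\,ds\bigg]\psi(x').
\]
Integral Minkowski in $x_n$ on $(0,1)$ then produces
\[
\big\|D_{x_n} w^{{\mathcal B},1}_1(x',\cdot,t)\big\|_{L^{p_0}_{x_n}(0,1)} \leq c\,|\psi(x')| \int_0^t (t-s)^{-\frac12 + \frac1{2p_0}}|D_s g_n^{{\mathcal T}}(s)|\,ds.
\]
For $1 < q < 2$ the choice $p_0 = 3q/(2-q)$ is exactly the Hardy--Littlewood--Sobolev balance controlling the right-hand side in $L^{p_0}_t(0,1)$ by $c\|D_t g_n^{{\mathcal T}}\|_{L^q(0,1)}$; for $q\ge 2$ H\"older's inequality in $s$ suffices since $(1/2 - 1/(2p_0))q' < 1$ for every finite $p_0$. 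Integrating in $x'\in B'_1$ against $\psi$ then yields \eqref{coro0927-1}.

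For \eqref{coro0923-1}, fix $1 < q < 2$, $r > p_0$, and take $g_n^{{\mathcal T}}$ as in Remark \ref{example-0924}. The function $-2 v(x_n,t) := -2\int_0^t D_{x_n}\Gamma_1(x_n,t-s)g_n^{{\mathcal T}}(s)\,ds$ is the unique solution of the one-dimensional half-line heat equation with Dirichlet data $g_n^{{\mathcal T}}$, and the anisotropic trace inequality one order lower than \eqref{0710-4} yields
\[
\|D_{x_n} v\|_{L^r({\mathbb R}_+ \times {\mathbb R})} \geq c\,\|g_n^{{\mathcal T}}\|_{\dot B^{\frac12 - \frac1{2r}}_{rr}({\mathbb R})}.
\]
For the explicit $g_n^{{\mathcal T}}$ in Remark \ref{example-0924} a direct Besov-norm computation (of power-logarithm type near $t = 3/4$) shows that membership in $\dot B^{\alpha}_{rr}$ fails exactly when $\alpha \geq 1 - 1/q + 1/r$, and with $\alpha = 1/2 - 1/(2r)$ this rearranges to $r \geq p_0$. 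Hence the right-hand side above is infinite for $r > p_0$, so $\|D_{x_n} w^{{\mathcal B},1}_1\|_{L^r(Q_1^+)} = \infty$, and the other pieces of $D_x w$ being in $L^r$ gives \eqref{coro0923-1}. The main technical point is this sharp trace-type lower bound, which upgrades the Besov irregularity of the boundary datum into $L^r$ blow-up of $\nabla w$; once it is in hand, the rest reduces to the same Hardy--Littlewood--Sobolev balance responsible for the critical exponent $p_0 = 3q/(2-q)$.
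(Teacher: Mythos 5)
Your core computation is correct, but there are two issues worth flagging: a structural confusion in the decomposition you set up, and a genuinely different (but in principle valid) route for the blow-up part that needs two supporting steps spelled out.

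\textbf{On the decomposition.} You propose to split $w=w^{\mathcal H}+w^{\mathcal S}$ as in \eqref{boundaryg}--\eqref{boundaryG} and then apply the $\mathcal L/\mathcal B/\mathcal N$ decomposition of Proposition \ref{lemma0406} to $w^{\mathcal S}$. This does not parse: $w^{\mathcal S}$ has boundary data $G=(-R'_1g_n,\dots,-R'_{n-1}g_n,0)$, which is purely tangential and, being a Riesz transform, is \emph{not} compactly supported away from $B'_1$. The pieces $w^{\mathcal B}$ and $w^{\mathcal N}$ in Proposition \ref{lemma0406} come exclusively from the $j=n$ column of $K_{ij}$ in \eqref{Poisson-tensor-K} acting on normal boundary data $g_n$ supported in $A$; with tangential, non-local data those terms never appear, and Lemma \ref{lemm0706-1}'s estimates (which lean on the support of $g_n$ being away from $B'_1$) are unavailable. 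In fact what you then actually analyze --- $D_{x_n}w^{\mathcal B,1}_1$ with $\psi$ as in \eqref{0710-1}, built directly from $g_n^{\mathcal S}$ --- is a piece of the decomposition of the \emph{full} Poisson-kernel solution $w$ with boundary data $g=(0,\dots,0,g_n)$, which is exactly what the paper's proof of the corollary does. So the right move is to discard the $\mathcal H+\mathcal S$ detour and apply $w=w^{\mathcal L}+w^{\mathcal B}+w^{\mathcal N}$ to $w$ itself. After that, your identification of the only singular piece as $I_{11}=c_n\big[\int_0^t\Gamma_1(x_n,t-s)D_sg_n^{\mathcal T}(s)\,ds\big]\psi(x')$ (via integration by parts in time) and the Hardy--Littlewood--Sobolev balance giving $p_0=3q/(2-q)$ for $1<q<2$, with H\"older for $q\geq 2$, match the paper.

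\textbf{On the blow-up \eqref{coro0923-1}.} Here you take a different route than the paper: you invoke a trace-type lower bound
$\|D_{x_n}v\|_{L^r(\mathbb R_+\times\mathbb R)}\geq c\|g_n^{\mathcal T}\|_{\dot B^{1/2-1/(2r)}_{rr}(\mathbb R)}$,
whereas the paper computes a pointwise lower bound on $\Gamma_1*D_sg_n^{\mathcal T}$ directly from $D_tg_n^{\mathcal T}\geq c(t-\tfrac34)^{-1/q}|\ln(t-\tfrac34)|^{-1}$ and integrates. Your inequality does hold, but it is not merely an application of Proposition \ref{prop2}(3): the $\dot W^{1,1/2}_r$ norm carries a $D_t^{1/2}$-term as well, and one needs the heat-equation reduction ($D_t=D_{x_n}^2$ on half-line Dirichlet solutions, so via Laplace transform $D_t^{1/2}\phi=-D_{x_n}\phi$) to discharge it --- the same reduction the paper uses, implicitly, in \eqref{0710-4}. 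Second, the lower bound is on $L^r(\mathbb R_+\times\mathbb R)$; to conclude $\|D_xw\|_{L^r(Q_1^+)}=\infty$ you still must control the tail $\{x_n>1\}\cup\{t>1\}$, exactly the content of \eqref{0730-3-1}, which your proposal omits. Finally, your Besov criterion ``membership fails when $\alpha\geq 1-1/q+1/r$'' is slightly off for this particular $g_n^{\mathcal T}$: the $\ln^{-1}$ factor means $g_n^{\mathcal T}\in\dot B^{1-1/q+1/r}_{rr}$ at the endpoint, so non-membership is strict, $\alpha>1-1/q+1/r$, i.e.\ $r>p_0$ --- which fortunately is what the corollary asserts and what your final conclusion claims. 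Compared to the paper's elementary pointwise computation, the trace route requires more background but packages the sharpness in a cleaner conceptual way; either is fine once the above gaps are closed.
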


\begin{proof}
We  recall the  decomposition of $w = w^{ {\mathcal L}}+ w^{ {\mathcal B}} + w^{ {\mathcal N}}$ defined in Proposition \ref{lemma0406}. Flowing similarly the estimate  \eqref{0706-9-2},    for $ 1 < r < \infty  $, we have
\begin{align}\label{0923-3}
\| \na( w - w^{ {\mathcal B}} )\| _{L^r (Q_1^+)} \leq \| g\|_{L^\infty}.
\end{align}
Here we compute only the normal derivative, since we can show that tangential derivatives are rather easily controlled.
%Note that for $1 \leq i \leq n-1$, we have
%\begin{align*}
%D_{x'}w^{\calB}_i(x,t) &=   c_n  \int_{0}^t \int_{\Rn} D_{y'}  g_n(y', s) \frac{x_n}{(t -s)^{\frac{n+2}2}}e^{-\frac{x_n^2}{t-s}}
%\int_{\Rn} e^{-\frac{|x'-y'-z'|^2}{t-s}}    \frac{z_i}{|z'|^n}   dz' dy' ds.
%\end{align*}
%Using the integral Minkowski's inequality, we have
%\begin{align*}
%(\int_{B_1} |D_{x'}w^{\calB}_i(x,t)|^{r} dx)^\frac1{r}
%& \leq c \int_0^t \frac1{t -s } |  g^{{\mathcal T}}_n(s)| ( \int_0^1 x_n^{r} e^{-\frac{x_n^2}{t-s}} dx_n)^{\frac1{r}} ds \|D_{y'}g_n^{\mathcal S}\|_{L^\infty}\\
%& \leq c \int_0^t  (t -s)^{-\frac12 + \frac1{2r}} |  g^{{\mathcal T}}_n(s)| ds \|D_{y'}g_n^{\mathcal S}\|_{L^\infty}\\
%& \leq c   t^{\frac12 + \frac1{2r}} \|  g^{{\mathcal T}}_n \|_{L^\infty(0, \frac12)}  \|D_{y'} g_n^{\mathcal S}\|_{L^\infty}.
%\end{align*}
%Hence, we have
%\begin{align}\label{0923-4}
%\|D_{x'}w^{\calB}_i\|_{L^\infty(0, \frac12; L^r (B^+_\frac12))}
% \leq c    \|  g^{{\mathcal T}}_n \|_{L^\infty(0, \frac12)}  \|D_{y'} g_n^{\mathcal S}\|_{L^\infty}.
%\end{align}
Thus, taking the normal derivative to $w^{\calB}_i$, we compute
\begin{align*}
 D_{x_n}w^{\calB}_i(x,t) &=   c_n  \int_{0}^t \int_{\Rn} D_s  g_n(y', s) \frac{1}{(t -s)^{\frac{n}2}}e^{-\frac{x_n^2}{t-s}}
\int_{\Rn} e^{-\frac{|x'-y'-z'|^2}{t-s}}    \frac{z_i}{|z'|^n}   dz' dy' ds\\
& \quad  +  c_n  \int_{0}^t \int_{\Rn} \De'  g_n(y', s) \frac{1}{(t -s)^{\frac{n}2}}e^{-\frac{x_n^2}{t-s}}
\int_{\Rn} e^{-\frac{|x'-y'-z'|^2}{t-s}}    \frac{z_i}{|z'|^n}   dz'\\
& : = I_1 + I_2.
\end{align*}
Due to \eqref{w12-1},  we have
\begin{align*}
\| I_2 \|_{L^\infty(Q^+_1)} \leq  c \| \na_{x'}^2  g_n \|_{L^\infty} < \infty.
\end{align*}
Using Lemma \ref{lemma0709-1}, we divide $I_1 = I_{11} + I_{12}$, where
\begin{align*}
I_{11}(x,t) &  = c_n\int_{0}^t   \Ga_1 (x_n, t-s)D_s  g_n^{\mathcal T}( s)  ds \psi(x'),\\
I_{12}(x,t) & = c_n\int_{0}^t   \Ga_1 (x_n, t-s)D_s  g_n^{\mathcal T}( s) \int_{\Rn} g_n^{\mathcal S}(y') J(x' -y', t-s)dy'  ds,
\end{align*}
where $\psi(x') = \int_{\Rn} \frac{x_1 -y_1}{|x' -y'|^n} g_n^{{\mathcal S}}(y') dy'   $.
It follows from Lemma \ref{lemma0709-1} that for $1 < r < \infty$,
\begin{align*}
\| I_{12}(t)\|_{L^r(B^+_1)} &  \leq c \int_0^t |D_s  g_n^{\mathcal T}( s) | \big(\int_0^1 e^{-r\frac{x_n^2}{t-s}} dx_n \big)^\frac1r \|g_n^{\mathcal S}\|_{L^\infty}\\
&  \leq c \int_0^t (t -s)^{\frac1{2r}} |D_s  g_n^{\mathcal T}( s) | \|g_n^{\mathcal S}\|_{L^\infty}\\
& \leq c    \| D_t g^{{\mathcal T}}_n \|_{L^p(0, 1)}  \|D_{y'} g_n^{\mathcal S}\|_{L^\infty}.
\end{align*}
%Hence, we have
%\begin{align*}
%\|I_{12}\|_{ L^r (Q^+_\frac12)}
% \leq c    \| D_t g^{{\mathcal T}}_n \|_{L^p(0, \frac12)}  \|D_{y'} g_n^{\mathcal S}\|_{L^\infty}.
%\end{align*}
Continuing computations for $I_{11}$,
\begin{align*}
\| I_{11}(t)\|_{L^{p_0}(B^+_\frac12)} &  \leq c \int_0^t (t-s)^{-\frac12} |D_s  g_n^{\mathcal T}( s) | \big(\int_0^\frac12 e^{-p_0\frac{x_n^2}{t-s}} dx_n \big)^{\frac1{p_0}} \|g_n^{\mathcal S}\|_{L^\infty}\\
&  \leq c \int_0^t (t -s)^{-\frac12 +\frac1{2p_0}} |D_s  g_n^{\mathcal T}( s) | \|g_n^{\mathcal S}\|_{L^\infty}.
\end{align*}
Due to Young's inequality, we obtain
\begin{align*}
\|I_{11}\|_{ L^{p_0} (Q^+_1)}
 \leq c    \| D_t g^{{\mathcal T}}_n \|_{L^p(0, 1)}  \|D_{y'} g_n^{\mathcal S}\|_{L^\infty}.
\end{align*}
Hence, summing up the estimates, for $1 \leq i \leq n-1$, we have
\begin{align}\label{0923-2}
\| \na w_i\|_{L^{p_0} (Q^+_1)} < \infty.
\end{align}
From the first equality of \eqref{1006-3},  we obtain \eqref{0923-2} for $i =n$. Hence, we complete the proof of \eqref{coro0927-1}.
%{\color{red}$\bullet$}\footnote{what is ca?}
It remains to show \eqref{coro0923-1}.
It follows from the above estimate that
\begin{align}\label{0927-2}
\|\na w - I_{11}\| _{L^r (Q_1^+)} \leq \infty \qquad 1 < r < \infty.
\end{align}
Noting that for $ \frac34 < t < 1$,
\begin{align*}
|D_t g^{\mathcal T}_n(t)| & =(t-\frac34)^{-\frac{1}{q}}| \ln^{-1} (t-\frac34)|\bke{(1-\frac{1}{q})-\ln^{-1} (t-\frac34)}\\
& \geq (1-\frac{1}{q})(t-\frac34)^{-\frac{1}{q}}| \ln^{-1} (t-\frac34)|.
\end{align*}
%and thus for any $\epsilon\in (0, \frac{1}{2}(1-\frac{1}{q}))$ if $\frac1{16} < t <\frac1{16}+e^{-\frac{1}{\epsilon}} $, then we see that
%\[
%\abs{D_t g^{\mathcal T}_n(t)}\ge  (1-\frac{1}{q})(t-\frac{1}{16})^{-\frac{1}{q}}\abs{\ln^{-1} (t-\frac{1}{16})}-(t-\frac{1}{16})^{-\frac{1}{q}}\abs{\ln^{-2} (t-\frac{1}{16})}
%\]
%\[
%\ge  (1-\frac{1}{q}-\epsilon)(t-\frac{1}{16})^{-\frac{1}{q}}\abs{\ln^{-1} (t-\frac{1}{16})}
%\ge   \frac{1}{2}(1-\frac{1}{q})(t-\frac{1}{16})^{-\frac{1}{q}}\abs{\ln^{-1} (t-\frac{1}{16})}
%\]
Hence, for  $ \frac34< t <1 $, we have
\begin{align*}
\abs{\int_0^t   \Ga_1 (x_n, t-s)D_s  g_n^{\mathcal T}( s)  ds}
&\geq c
\int_{\frac34}^{\frac12(t + \frac34)}   (t -s)^{-\frac12} e^{-\frac{x_n^2}{4(t-s)}} (s-\frac34)^{-\frac1q} \abs{\ln^{-1} (s-\frac34)} ds\\
&\geq c  (t-\frac34)^{-\frac12} e^{-\frac{x_n^2}{2(t-\frac34)}}  \int_{ \frac34}^{\frac12(t + \frac34)}  (s-\frac34)^{-\frac1q} \abs{\ln^{-1} (s-\frac34)} ds\\
& = c  (t-\frac34)^{-\frac12} e^{-\frac{x_n^2}{2(t-\frac34)}}  \int^{\frac12 (t- \frac34)}_0  s^{-\frac1q} \abs{\ln^{-1} (s )} ds.
\end{align*}
By L'hopital's Theorem, we have $\displaystyle\lim_{a \ri 0} \frac{\int^a_0  s^{-\frac1q} |\ln^{-1} (s )| ds}{a^{1-\frac1q} |\ln^{-1} (a )| } = \frac{q-1}{q} $, we have
\begin{align*}
\abs{\int_0^t   \Ga_1 (x_n, t-s)D_s  g_n^{\mathcal T}( s)  ds}
&\geq c (t-\frac34)^{\frac12 -\frac1q} e^{-\frac{x_n^2}{2(t-\frac34)}}  \abs{\ln^{-1} (t-\frac34)}.
\end{align*}
Hence, for $r > p_0$,  we have
\begin{align*}
\|I_{11}\|^r_{ L^{r} (Q^+_\frac12)} \geq c \int_{\frac34}^1(t-\frac34)^{(\frac12 -\frac1q)r +\frac12} \abs{\ln^{-r} (t-\frac34)} dt \| \psi\|^r_{L^r(B'_\frac12)} =\infty.
\end{align*}
Thus,  combining the estimate \eqref{0927-2}, we have \eqref{coro0923-1}.
We complete the proof of Corollary  \ref{lemm1222-1}.
\end{proof}

\section{Proof of main results %Theorem \ref{maintheorem-SS} and Theorem \ref{maintheorempressure-SS}
for Navier-Stokes equations}
\label{proof-ns}
\setcounter{equation}{0}

Previously, we presented the proofs of Theorem \ref{maintheorem-SS} and Theorem \ref{maintheorempressure-SS} for the Stokes equations.
In this section we complete the proofs of those results by providing the details for the case of Navier-Stokes equations.
We start with the case of Theorem \ref{maintheorem-SS}

\subsection{Proof of Theorem \ref{maintheorem-SS} for Navier-Stokes equations}
\label{ns-thm11}
%\footnote{The proof is very similar to that of JDE. Can we refer to that one in JDE?}

Let $g$ be a boundary data
defined in \eqref{0502-6} and \eqref{boundarydata}, and $w$ be a solution of the Stokes
equations \eqref{Stokes-bvp-200}-\eqref{Stokes-bvp-210} with $f =0$  defined by
\eqref{rep-bvp-stokes-w}. Let $\phi_1 \in C_c^\infty({\mathbb R}^{n})$ be a  cut-off function satisfying $ \phi_1 \geq 0$,  ${\rm supp} \, \phi_1 \subset B_2$ and $\phi_1 \equiv 1 $ in $B_1$. Also, let $\phi_2 \in C_c^\infty(-\infty, \infty)$ be a  cut-off function satisfying $ \phi_2 \geq 0$,  ${\rm supp} \, \phi_2 \subset (-2, 2)$ and $\phi_2 \equiv 1 $ in $(-1, 1)$. We set $\phi (x,t) =\phi_1 (x) \phi_2 (t)$.

We define $W = \phi w$. Then, it is direct that $ W = w $ in $ Q^+_1 $.
Furthermore, since supports of $g_n$ and $\phi$ are disjoint, we note that $W|_{x_n =0 } =0$ and $W|_{t =0} =0$.
By the result of  Proposition \ref{lemma0406}, we observe that
\begin{align}
\label{uc1-1}  \| W\|_{L^r (\R_+ \times (0, 1))} \leq c\| w\|_{L^r (B_2^+ \times (0, 4))} \leq c\| g\|_{L^\infty (\Rn \times (0, \infty))} \leq ca
% \| \na W\|_{L^{p_0}(B_r \times (t_0, t_0 + r^2))} = \infty
\end{align}
for all $ 1 \leq r \leq \infty$, where $a >0$ is defined in \eqref{0502-6}.

We consider the following perturbed Navier-Stokes equations in
$\R_+ \times (0, 1)$:
\begin{equation}\label{CCK-Feb7-10}
v_t-\Delta v+\nabla q+{\rm div}\,\left(v\otimes v+v\otimes
W+W\otimes v\right)=-{\rm div}\,(W\otimes W), \quad {\rm
div} \, v =0
\end{equation}
with homogeneous initial and boundary data, i.e. $v(x,0)=0$ and $ v(x,t)=0$ on $x_n=0.$
%\begin{equation}\label{pnse-bdata-20}
%v(x,0)=0,\qquad v(x,t)=0 \,\,\mbox{on} \,\,\{x_n=0\}.
%\end{equation}
Our aim is to establish the existence of solution  $v$ for
\eqref{CCK-Feb7-10} satisfying $v \in L^{r} (\R_+ \times (0,1)) \cap L^{\infty} (\R_+ \times (0,1))$
and $\na v \in L^r (\R_+ \times (0,1) )$ for all $ n+2 < r< \infty$. In order to do that, we
consider the iterative scheme for \eqref{CCK-Feb7-10}, which is
given as follows: For a positive integer $m\ge 1$
\begin{align*}%\label{CCK-Feb7-20}
&v^{m+1}_t-\Delta v^{m+1}+\nabla q^{m+1}=-{\rm
div}\,\left(v^{m}\otimes v^{m}+v^{m}\otimes W+W\otimes
v^{m}+W\otimes W \right),\\
& \qquad \qquad \qquad \qquad \qquad {\rm div} \, v^{m+1} =0
\end{align*}
with homogeneous initial and boundary data, i.e. $v^{m+1}(x,0)=0$ and
$v^{m+1}(x,t)=0$ on $\{x_n=0\}$.
We set $v^1=0$.  By Proposition \ref{proposition1},  we have
\begin{align*}
\| v^2(t)\|_{L^r (\R_+)} & \leq c \int_0^t ( t -s)^{-\frac12} \| W (s) \otimes W(s) \|_{L^r (\R_+)} ds.
%& \leq c  \|W \|_{L^\infty (\R_+ \times (0, 1))}\int_0^t ( t -s)^{-\frac12} \| W (s)\|_{L^p (\R_+)}  ds.
\end{align*}
From now on, we denote $Q_+:=\R_+ \times (0,1)$, unless any confusion is to be expected.
By Young's inequality, we have
\begin{align}\label{1202-1}
\| v^2\|_{L^r (Q_+)} \leq  c \|W \otimes W \|_{L^r (Q_+)} \leq  c \|W   \|_{L^r (Q_+)}   \|W   \|_{L^\infty (Q_+)}.
\end{align}
Since $r > n+2$, according to Proposition \ref{theoexternel-boundary}, we have
\begin{align}\label{est-v2-10}
  \|  v^2\|_{L^{\infty} (Q_+)}  &\leq c     \|  W\otimes W\|_{L^{r} (Q_+)} \leq c     \|  W\|_{L^{r} (Q_+)} \|  W\|_{L^{\infty} (Q_+)}.
\end{align}
According to      Proposition \ref{theo0503}, we have
%\begin{align}\label{est-v2-10}
%  \|\na^2  v^2\|_{L^p (\R_+ \times (0, 1))}  + \|D_t  v^2\|_{L^{p} (\R_+ \times (0, 1))}   &\leq c     \| \na W\cdot W\|_{L^{p} (\R_+ \times (0, 1))}\\
%&\leq c     \| \na W\|_{L^{p} (\R_+ \times (0, 1))} \|  W\|_{L^{\infty} (\R_+ \times (0, 1))}.
%\end{align}
\begin{align}\label{est-v2-10}
  \|\na  v^2\|_{L^{r} (Q_+)}  &\leq c     \|  W\otimes W\|_{L^{r} (Q_+)} \leq c     \|  W\|_{L^{r} (Q_+)} \|  W\|_{L^{\infty} (Q_+)}.
\end{align}
By \eqref{uc1-1}, we have  $ A :=    \|W\|_{L^r (Q_+) } + \|W\|_{L^\infty (Q_+) }  \leq 2ca < 1$, where $a>0$ is defined in \eqref{0502-6}. Taking $a >0$ small such that
$A<\frac{1}{4c}$, where $c$ is the constant in
\eqref{1202-1}-\eqref{est-v2-10} such that
\begin{align*}
\| v^2\|_{L^r (Q_+)}+ \| v^2\|_{L^\infty (Q_+)} + \|\na  v^2\|_{L^r( Q_+)}   < \frac{3c}2A^2 < A.
\end{align*}
Suppose that for $m \geq 2$,
\begin{align}\label{0823-1}
\|\na v^m\|_{L^r (Q_+) } + \|v^m\|_{L^r (Q_+) } + \|v^m\|_{L^\infty (Q_+) } < A.
\end{align}
Then, iterative arguments show that
\begin{align}\label{0531-1}
\notag&\| \na v^{m+1}\|_{L^r (Q_+) }+\|  v^{m+1}\|_{L^r (Q_+) }+\|  v^{m+1}\|_{L^\infty (Q_+) } \\
 \notag & \quad \leq c \big(
\||v^m|^2+|v^mW|+|W|^2\|_{L^r (Q_+) }  \big) \\
\notag &\quad \leq  c\left(\|v^m\|_{L^r (Q_+) } + \|W\|_{L^r (Q_+) }\right) \Big(  \|v^m\|_{L^\infty (Q_+) } + \|W\|_{L^\infty (Q_+) }   \Big)\\
 & \quad   \leq 4c A^2 <A.
\end{align}
Hence, by mathematical induction, \eqref{0823-1} holds for all $m \geq 2$.
%Similarly, we note that
%\begin{align}\label{0531}
%\notag&\|  v^{m+1}\|_{L^p (\R_+ \times (0,1)) }\\
%\notag & \quad \leq c \big(
%\||v^m|^2+|v^mW|+|W|^2\|_{L^p (\R_+ \times (0,1)) }  \big) \\
%\notag &\quad \leq  2c\left(\|v^m\|_{L^p (\R_+ \times (0,1)) } + \|W\|_{L^p (\R_+ \times (0,1)) }\right) \Big(  \|v^m\|_{L^\infty (\R_+ \times (0,1)) } + \|W\|_{L^\infty (\R_+ \times (0,1)) }   \Big)\\
% & \quad   \leq 4c A^2 <A.
%\end{align}
%
%\begin{align}\label{0531}
%\notag&\|  v^{m+1}\|_{L^\infty (\R_+ \times (0,1)) }\\
%\notag & \quad \leq c \big(
%\||v^m|^2+|v^mW|+|W|^2\|_{L^p (\R_+ \times (0,1)) }  \big) \\
%\notag &\quad \leq  2c\left(\|v^m\|_{L^p (\R_+ \times (0,1)) } + \|W\|_{L^p (\R_+ \times (0,1)) }\right) \Big(  \|v^m\|_{L^\infty (\R_+ \times (0,1)) } + \|W\|_{L^\infty (\R_+ \times (0,1)) }   \Big)\\
% & \quad   \leq 4c A^2 <A.
%\end{align}

We denote, for convenience, $V^{m+1}:=v^{m+1}-v^{m}$ and $Q^{m+1}:=q^{m+1}-q^{m}$ for $m\ge
1$. We then see that $(V^{m+1}, Q^{m+1})$ solves
\[
V^{m+1}_t-\Delta V^{m+1}+\nabla Q^{m+1}=-{\rm
div}\,\left(V^{m}\otimes v^{m}+v^{m-1}\otimes V^{m}+V^{m}\otimes
W+W\otimes V^{m}\right),
\]
\[
{\rm div} \, V^{m+1} =0,
\]
with homogeneous initial and boundary data, i.e. $V^{m+1}(x,0)=0$
and $V^{m+1}(x,t)=0$ on $\{x_n=0\}$. Taking sufficiently small
$a>0$ such that $A < \frac1{6c}$, we obtain from \eqref{0531-1} that
\begin{align}\label{est-v2-10-infty}
\notag &\|  V^{m+1}\|_{L^{r} (Q_+ )}+\|  V^{m+1}\|_{L^{\infty} (Q_+)}+\|\na  V^{m+1}\|_{L^{r} (Q_+)}\\
\notag   &     \leq c \big(
V^{m}\otimes v^{m}+v^{m-1}\otimes V^{m}+V^{m}\otimes
W+W\otimes V^{m}\|_{L^r (Q_+)}   \big) \\
\notag & \leq c  \|  V^m\|_{L^{r} (Q_+)}  \big( \|  v^m\|_{L^{\infty} (Q_+)} + \|  v^{m-1}\|_{L^{\infty} (Q_+)} + + \|  W\|_{L^{\infty} (Q_+)}  \big)\\
& \leq \frac12   \|  V^m\|_{L^{r} (Q_+)}.
\end{align}
%\begin{align}\label{est-v2-10}
%  \|\na  V^{m+1}\|_{L^{p} (\R_+ \times (0, 1))}  &\     \leq c \big(
%\| V^m \otimes v^m+| v^{m-1} \otimes V^m|\|_{L^{p}( \R_+ \times (0, \infty))}   \big) \\
%& \leq \frac12   \|  V^m\|_{L^{p} (\R_+ \times (0, 1))}.
%\end{align}
%
This implies that  $(v^m, \na v^m)$ converges to $(v, \na v)$ in
\begin{align*}%\label{1130-1}
\bke{L^r(Q_+) \cap L^\infty (Q_+) }  \times L^r (Q_+)
\end{align*}
such that $v$
solves \eqref{CCK-Feb7-10} with an appropriate distribution $q$.
We then set $u:=v+W$ and $p =\pi + q$, which becomes a  weak solution of the
Navier-Stokes equations in $\R_+ \times (0, 1)$, namely
\[
u_t-\Delta u+\nabla p=-{\rm div}\,\left(u\otimes u\right),\qquad
{\rm div} \, u=0 \quad \mbox{in} \quad Q^+_1
\]
with boundary data $u(x,t)=0$ on $\Sigma = ( B_1 \cap \{ x_n =0\}) \times (0, 1)$ such that
\begin{align}\label{0502-7}
 \| u \|_{L^{\infty} (Q^+_1 )} \leq c,\qquad \| \na u\|_{L^p (Q^+_\frac12 )} = \infty.
\end{align}
This completes the proof of Theorem \ref{maintheorem-SS} for the case of the Navier-Stokes equations.
\qed

\subsection{Proof of Theorem \ref{maintheorempressure-SS} for Navier-Stokes equations}

\label{ns-thm15}
%\setcounter{equation}{0}

%\begin{lemm}\label{lemm1222-1}
%Suppose that $g_n^{\calS}$ and $g_n^{\calT}$ satisfy the assumption \eqref{1222-2}.
%\begin{itemize}
%\item[(1)]
%Then, for $ 1 < p <2$,
%\begin{align*}
%\| D_x w \|_{L^{\frac{3p}{2-p}}(Q^+_1)}  < \infty.
%\end{align*}
%
%\item[(2)]
%If $ 2 \leq p$, then for  $1 < r < \infty$,
%\begin{align*}
%\| D_x w \|_{L^\infty(0, 1;L^{r}(B^+_1))}  < \infty.
%\end{align*}
%
%
%\end{itemize}
%
%\end{lemm}
%We proved Lemma \ref{lemm1222-1} in Appendix \ref{appendixa-2}.

%\footnote{KK: the following is previous version of the lemma {\color{magenta}
%Let $p$, $p_0$,  $ g_n^1$ and $g_n^2$ satisfy the assumption \eqref{1222-2}. Then,
%\begin{align*}
%\| D_x w \|_{L^{p_0}((Q^+_1)}  < \infty.
%\end{align*}
%}}

Let $g_n$ be a boundary data
defined in \eqref{1222-2} and $w$ be a solution of the Stokes
equations \eqref{Stokes-bvp-200}-\eqref{Stokes-bvp-210} with $f =0$  defined by
\eqref{rep-bvp-stokes-w}. Let $\phi \in C_c^\infty (\R \times (0, \infty))$ be a cut-off function defined in Section \ref{ns-thm11}. Let $W = \phi w$ and $P = \phi \pi$ such that $ W = w $   and $P =\pi$ in $Q^+_1$.
%{\color{red} $\bullet$} \footnote{Is it in $ B_{1/4}^+ \times (0, 1/4) $, not $Q^+_\frac12$?}
As before, we denote $Q_+:=\R_+ \times (0,1)$ for simplicity.

Let  $a >0$ be the number defined in \eqref{0502-6}.
From Section \ref{proofss} and Corollary \ref{lemm1222-1}, we have
\begin{align} \label{uc1-1-2}
 %\notag
 \| D_t W\|_{L^q (Q_+)} +  \| D^2_x W\|_{L^{\frac{3q}2} (Q_+)}  + \|\na P\|_{L^q(Q_+)}
%\notag& \qquad   \leq c \big(  \| D_t w\|_{L^{p} (Q^+_\frac12) } +  \| D^2_x w\|_{L^{\frac{3p}2} (Q^+_\frac12)} +  \|\na \pi \|_{L^{p}(Q^+_\frac12)} \big)\\
%  < ca,\\
 + \|D_x W\|_{L^{p_0} (Q_+)} < ca,
\end{align}
where $ p_0 := \frac{ 3q}{ 2-q}$ if $q < 2 $ and $p_0$ is any real number satisfying $p_0 > \frac{3q}2$ if  $q \geq 2$. We note that $p_0 > \frac{3q}2$.
%{\color{red} $\bullet$} \footnote{Why we do need to mention  $p_0 > \frac{3q}2$?}
From \eqref{est-L-tensor} and \eqref{0310-1}, applying Section \ref{proofss}, we have
\begin{align}\label{0913-1}
\|W\|_{L^q_t L^r_x (Q_+)} < \|w\|_{L^q_t L^r_x (Q_1^+)} < ca, \quad \forall r < \infty.
\end{align}
We consider the following perturbed Navier-Stokes equations in
$Q_+$:
\begin{equation}\label{CCK-Feb7-10-pressure-2}
v_t-\Delta v+\nabla q+{\rm div}\,\left(v\otimes v+v\otimes
W+W\otimes v\right)=-{\rm div}\,(W\otimes W), \quad {\rm
div} \, v =0
\end{equation}
with homogeneous initial and boundary data, i.e. $
v(x,0)=0 $ and $v(x,t)=0$  on $x_n=0$.

Our aim is to establish the existence of solution  $v$ for
\eqref{CCK-Feb7-10-pressure-2} satisfying
\begin{align*}
v \in L^{\infty}(Q_+), \qquad D_x v, \,
 D_x^2 v, \, D_t v, \,  \, q \in L^{p_0}  ( Q_+).
\end{align*}
Since the proofs are exactly the same, we only prove for the case of $q < 2$.
%\begin{equation}\label{pnse-bdata-20-2}
%v(x,0)=0,\qquad v(x,t)=0 \,\,\mbox{on} \,\,\{x_n=0\}.
%\end{equation}
%\begin{theo}
%\label{thm-stokes}
%Let    $1<p, q<\infty$ and  $0\leq \al<\frac{1}{2}(1-\frac{1}{p}-\frac{2}{q})$. Let $h\in \dot B^{-2\al-\frac{2}{q}}_{pq}(\R_+)$ with $\mbox{\rm div} \, h=0$ and $ g\in L^q_\al(0,T;\dot{B}^{-\frac{1}{p}}_{pp}(\Rn))$.
%Let  ${\mathcal F}\in
%L^{q_1}_{\al_1}(0,T; L^{p^{{\mathcal H}}}(\R))
%$, ${\al_1}=\al+ \frac{1}{2}-\frac{n}{2}(\frac{1}{p^{{\mathcal H}}}-\frac{1}{p})+\frac{1}{q}-\frac{1}{q_1}$ with $q\geq q_1$ and  $\al\leq \al_1<1-\frac{1}{q_1}$.
%
%Then, there is a weak solution $u\in L^q_\al(0,T;L^p(\R_+))$ to the Stokes equations \begin{align}
%&u_t-\Delta u+\nabla \pi = {\rm div}\,{\mathcal F},\\
%&{\rm div} \, u =0,\\
%&u|_{t =0} =0, \quad u|_{x_n =0} =0
%\end{align}
%with   the following estimate
%\begin{align*}
%\| u\|_{L^q_\al(0,T;L^p(\R_+))}\leq c  \|{\mathcal F}\|_{L^{q_1}_{\al_1}(0,T; L^{p^{{\mathcal H}}}(\R_+))}.
%\end{align*}
%Moreover, the solution is unique in the class $L^q_\al(0,T;L^p(\R_+))$ for $0\leq \al<1-\frac{1}{q}$.
%\end{theo}
We consider the iterative scheme for \eqref{CCK-Feb7-10-pressure-2}, which is
given as follows: For a positive integer $m\ge 1$
\begin{align*}%\label{CCK-Feb7-20}
&v^{m+1}_t-\Delta v^{m+1}+\nabla q^{m+1}=-{\rm
div}\,\left(v^{m}\otimes v^{m}+v^{m}\otimes W+W\otimes
v^{m}+W\otimes W \right),\\
& \qquad \qquad \qquad \qquad \qquad {\rm div} \, v^{m+1} =0
\end{align*}
with homogeneous initial and boundary data, i.e. $v^{m+1}(x,0)=0$ and
$v^{m+1}(x,t)=0$ on $\{x_n=0\}$.
We set $v^1=0$.   From the well-known result of initial-boundary value problem for Stokes equations in half space, we have
\begin{align}\label{est-v2-10-2-1}
 \notag \|\na^2  v^2\|_{L^{p_0} (Q_+)}  + \|D_t  v^2\|_{L^{p_0} (Q_+)}   &\leq c     \| \na W\cdot W\|_{L^{p_0} (Q_+)}\\
&\leq c     \| \na W\|_{L^{p_0} (Q_+)} \|  W\|_{L^{\infty} (Q_+)}.
\end{align}
According to  Proposition \ref{theo0503}, Proposition \ref{thm-stokes} and  Proposition \ref{theoexternel-boundary-2},   we have
\begin{align}\label{est-v2-10-2}
\notag  \|\na  v^2\|_{L^{p_0} (Q_+)}  &\leq c     \|  W\otimes W\|_{L^{p_0} (Q_+)}\leq c     \|  W\|^2_{L^{\infty} (Q_+)},\\
  \|  v^2\|_{L^{p_0} (Q_+)}  &\leq c     \|  W\otimes W\|_{L^{p_0} (Q_+)}\leq c     \|  W\|^2_{L^{\infty} (Q_+)}.
\end{align}
Take $r_0< \infty$ satisfying $ \frac2q +\frac{n}{r_0} < 2$.
From Proposition \ref{prop0803-1},  Young's inequality and Holder inequality,  we have
\begin{align*}
  |  v^2(x,t)|  &\leq c   \int_0^t (t -s)^{-\frac{n}{2r_0}} \|  \na W(s) \cdot W(s)\|_{L^{r_0}  (\R_+  )}  ds\\
  &\leq c   t^{-\frac{n}{2r_0}+1 -\frac1q} \|  \na W \cdot W\|_{L^q_tL^{r_0}_x  (Q_+ )}.
\end{align*}
Hence, we obtain
\begin{align}\label{0913-2}
\| v^2\|_{L^\infty(Q_+)} \leq c \|  \na W \|_{L^q_tL^{r_0}_x  (Q_+  )} \|W\|_{L^\infty(Q_+)}.
\end{align}
Moreover, from Proposition \ref{prop0803-1}, Proposition \ref{prop0803-2},  Young's inequality and Hardy-Littlewood-Sobolev's inequality,  we have
\begin{align}\label{0913-3}
\| v^2\|_{L^{2q}_t L^{2r_0}_x (Q_+)} &  \leq c \| W \otimes W\|_{L^q_t L^{r_0}_x (Q_+)}  \leq c \| W \|^2_{L^{\infty} (Q_+)}.
\end{align}
From \eqref{est-L-tensor} and \eqref{0310-1}, applying Section \ref{proofss}, we obtain $\|W\|_{L^{\infty}( \R_+ \times (0,1)) } \leq \|w\|_{L^{\infty}(Q^+_1 )} \leq ca$, where $ a>0$ is defined in \eqref{0502-6}. In Corollary \ref{lemm1222-1}, we showed that $\| \na w\|_{L^{p_0} (Q^+_\frac12 )} < ca$. Then, from \eqref{0913-1},  we have
\begin{align*}
 A : &=   \| \na W\|_{L^{p_0} (Q_+)} + \|  \na W \|_{L^q_tL^{r_0}_x  (Q_+  )} +       \|W\|_{L^{\infty}( Q_+) }
\leq ca.
\end{align*}
 Taking $a >0$ small such that
$ca < A<\min(\frac{1}{4c},\frac12)$, where $c$ is the constant in \eqref{est-v2-10-2-1}, \eqref{est-v2-10-2}, \eqref{0913-2} and \eqref{0913-3}, we have
\begin{align*}
&\|\na^2  v^2\|_{L^{p_0} (Q_+)}  + \|D_t  v^2\|_{L^{p_0} (Q_+)} + \|\na  v^2\|_{L^{p_0} (Q_+)} + \|  v^2\|_{L^{p_0} (Q_+)}\\
& \quad    + \|  v^2\|_{L^{\infty} (Q_+)} + \| \na v^2\|_{L^q_tL^{r_0}_x(Q_+)} + \|  v^2\|_{L^{2q}_tL^{2r_0}_x(Q_+)} < A.
\end{align*}
Suppose that for $m \geq 2$,
\begin{align}\label{0823-1-1}
\notag &\| \na^2 v^{m}\|_{L^{p_0}( Q_+) } + \| D_t v^{m}\|_{L^{p_0}( Q_+) } +  \|\na  v^{m}\|_{L^{p_0}( Q_+)}+  \|  v^{m}\|_{L^{p_0} ( Q_+)}\\
& \quad  +  \|  v^{m}\|_{L^{\infty}( Q_+)}    + \| \na v^m\|_{L^q_tL^{r_0}_x(Q_+)} + \|  v^m\|_{L^{2q}_tL^{2r_0}_x( Q_+)}  < A.
\end{align}
Then, first we obtain
\begin{align}\label{0531-1-2}
\notag&\| \na^2 v^{m+1}\|_{L^{p_0}( Q_+) } + \| D_t v^{m+1}\|_{L^{p_0}( Q_+) }\\
 \notag \leq &c \big(
\|\na v^m \otimes v^m+|\na v^m \otimes W|+ |\na W \otimes v^m|+|\na W\otimes W|\|_{L^{p_0}( Q_+)}   \big) \\
%\notag
\leq&  2c\big(\|\na v^m\|_{L^{p_0}( Q_+)  }+\|\na W\|_{L^{p_0} ( Q_+)  }\big)
\big(\|v^m\|_{L^{\infty}( Q_+)  }+\|W\|_{L^{\infty} ( Q_+)  }\big)
%\\& \quad
 \leq 4c A^2 <\frac12 A.
\end{align}
Similarly, we have
\begin{align*}
&  \|  v^{m+1}\|_{L^{p_0} (Q_+  )}+\|\na  v^{m+1}\|_{L^{p_0} (Q_+)}\\
       \leq &c  \big(\| v^m  \otimes v^m|  + | v^m  \otimes W|+ | W \otimes v^m|+| W\otimes W|\|_{L^{p_0}( Q_+)}   \big) \\
\leq &c   \big( \|  v^m\|_{L^{p_0} ( Q_+)} \|  v^m\|_{L^{\infty} (Q_+)} + \|  v^m\|_{L^{p_0} ( Q_+)} \|  W\|_{L^{\infty} ( Q_+)} + \|  W\|^2_{L^{p_0} ( Q_+)} \big)
%\\
\leq A^2 < \frac12A.
\end{align*}
Continuing computations for $L^{\infty}$ (from \eqref{0913-2}), we have
\begin{align}\label{0913-2-1}
\notag \|  v^{m+1}\|_{L^{\infty} (Q_+ )}
%\\ \notag
 \leq &c   \big( \| \na v^m\|_{L^q_tL^{r_0} ( Q_+)} \|  v^m\|_{L^{\infty} ( Q_+)}+\| \na v^m\|_{L^q_tL^{r_0} ( Q_+)} \|  W\|_{L^{\infty} ( Q_+)}\\
%\notag
& \quad   + \| \na W\|_{L^q_tL^{r_0} ( Q_+)} \|  W\|_{L^{\infty} ( Q_+)} \big)
%\\
\leq A^2 < \frac12A.
\end{align}
For mixed norm, we compute likewise, that is,
\begin{align*}
&\|\na  v^{m+1}\|_{L^{q}_tL^{r_0}_x (Q_+)}\\
    \leq &c  \big(\| v^m  \otimes v^m|  + | v^m  \otimes W|+ | W \otimes v^m|+| W\otimes W|\|_{L^{q}_tL^{r_0}_x (Q_+)}   \big) \\
\leq &c   \big( \|  v^m\|^2_{L^{2q}_tL^{2r_0}_x (Q_+)} + \|  v^m\|_{L^{2q}_tL^{2r_0}_x (Q_+)} \|  W\|_{L^{2q}_tL^{2r_0}_x (Q_+)} + \|  W\|^2_{L^{2q}_tL^{2r_0}_x (Q_+)} \big)
%\\
\leq A^2 < \frac12A.
\end{align*}
In the same way as above, we get
\begin{align*}
&\| v^{m+1}\|_{L^{2q}_tL^{2r_0}_x (Q_+)}\\
    \leq & c  \big(\| |v^m  \otimes v^m|  + | v^m  \otimes W|+ | W \otimes v^m|+| W\otimes W|\|_{L^{q}_tL^{r_0}_x (Q_+)}   \big) \\
 \leq &c   \big( \|  v^m\|^2_{L^{2q}_tL^{2r_0}_x (Q_+)} + \|  v^m\|_{L^{2q}_tL^{2r_0}_x (Q_+)} \|  W\|_{L^{2q}_tL^{2r_0}_x (Q_+)} + \|  W\|^2_{L^{2q}_tL^{2r_0}_x (Q_+)} \big)
 %\\
\leq A^2 < \frac12A.
\end{align*}
Hence, by mathematical induction, \eqref{0823-1-1} holds for all $m \geq 2$.

Next, we denote $V^{m+1}:=v^{m+1}-v^{m}$ and $Q^{m+1}:=q^{m+1}-q^{m}$ for $m\ge
1$. We then see that $(V^{m+1}, Q^{m+1})$ solves
\[
V^{m+1}_t-\Delta V^{m+1}+\nabla Q^{m+1}=-{\rm
div}\,\left(V^{m}\otimes v^{m}+v^{m-1}\otimes V^{m}+V^{m}\otimes
W+W\otimes V^{m}\right),
\]
\[
{\rm div} \, V^{m+1} =0,
\]
with homogeneous initial and boundary data, i.e. $V^{m+1}(x,0)=0$
and $V^{m+1}(x,t)=0$ on $\{x_n=0\}$. Taking sufficiently small
$\al>0$ such that $A < \frac1{6c}$, from \eqref{0531-1-2}, we obtain
\begin{align}\label{0531-1-3}
\notag&\| \na^2 V^{m+1}\|_{L^{p_0}( Q_+) } + \|D_t V^{m+1}\|_{L^{p_0}( Q_+) }\\
 %\notag & \quad \leq c \big(
%\|\na V^m \otimes v^m+\na v^{m-1} \otimes V^m +V^{m}\otimes
%W+W\otimes V^{m}\|_{L^{p_0} (\R_+ \times (0,1))}   \big) \\
\notag \leq & 2c\big(\|\na V^m\|_{L^{p_0}( Q_+)  }+\|V^m\|_{L^{\infty} ( Q_+)  }\big)\\
\notag & \quad \times
\big(\|v^m\|_{L^{\infty}( Q_+)  }+\|\na v^{m-1}\|_{L^{p_0} ( Q_+)  } + \|W\|_{L^{\infty}( Q_+)  } + \|\na W\|_{L^{p_0}( Q_+)  }\big)\\
 \leq &\frac12 \big(\|\na V^m\|_{L^{p_0}( Q_+)  }+\|V^m\|_{L^{\infty} ( Q_+)  }\big).
\end{align}
Similarly, we have
\begin{align}\label{est-v2-10-infty-2}
\notag &\|  V^{m+1}\|_{L^{p_0} (Q_+)}+\|\na  V^{m+1}\|_{L^{p_0} (Q_+)}\\
 \notag      \leq &c \big(
\| V^m \otimes v^m +   v^{m-1} \otimes V^m + V^{m}\otimes
W+W\otimes V^{m}\|_{L^{p_0} (Q_+)}   \big) \\
\notag \leq &c  \|  V^m\|_{L^{p_0} (Q_+)}  \big( \|  v^m\|_{L^{\infty} (Q_+)} + \|  v^{m-1}\|_{L^{\infty} (Q_+)}  + \|  W\|_{L^{\infty} (Q_+)} \big)\\
\leq &\frac12   \|  V^m\|_{L^{p_0} (Q_+)}.
\end{align}
For $L^{\infty}$ estimate, we compute
\begin{align}\label{0913-2-1}
\notag \|  V^{m+1}\|_{L^{\infty} (Q_+  )}
%\\ \notag
 \leq &c   \big( \| \na V^m\|_{L^q_tL^{r_0} ( Q_+)} \|  v^m\|_{L^{\infty} ( Q_+)} +\| \na V^m\|_{L^q_tL^{r_0} ( Q_+)} \|  W\|_{L^{\infty} ( Q_+)} \big)\\
\leq &\frac12   \|  \na V^m\|_{L^q_tL^{r_0} (Q_+)}.
\end{align}
In the same way as above, we get
\begin{align*}
\| V^m\|_{L^{2q}_tL^{2r_0}_x (Q_+)} & < \frac12\| V^{m-1}\|_{L^{2q}_tL^{2r_0}_x (Q_+)},\\
\| \na V^m\|_{L^{q}_tL^{r_0}_x (Q_+)} & < \frac12\| \na V^{m-1}\|_{L^{q}_tL^{r_0}_x (Q_+)}.
\end{align*}
Therefore, there is $v$ satisfying $v \in L^{2q}_tL^{2r_0}_x (Q_+) \cap L^\infty(Q_+)$, $D_x v \in L^{q}_tL^{r_0}_x (Q_+) \cap L^{p_0}(Q_+)$ and $ D_x^2 v, \, D_t v \in L^{p_0}(Q_+)$ such that
\begin{align*}
&v^m \ri v \quad \mbox{in} \quad L^{2q}_tL^{2r_0}_x (Q_+) \cap L^\infty(Q_+),\\
&D_x v^m \ri D_x v \quad \mbox{in} \quad L^{q}_tL^{r_0}_x (Q_+) \cap L^{p_0}(Q_+),\\
&D_x^2 v^m \ri D_x^2 v \quad \mbox{in} \quad   L^{p_0}(Q_+),\\
&  D_t v^m \ri D_t v  \quad \mbox{in} \quad  L^{p_0}(Q_+).
\end{align*}
Moreover, $v$ solves  \eqref{CCK-Feb7-10-pressure-2} with appropriate pressure $q \in L^{p_0} (Q_+)$.

%$(v^m, \na v^m, \na^2 v^m, D_t v^m)$ converges to $(v, \na v)$ in
%\begin{align*}\label{1130-1}
%\big( L^{p_0}( \R_+ \times (0, 1)) \cap L^{\infty} (\R_+ \times (0, 1))  \big) \times L^{p_0} (\R_+ \times (0, 1)) \times L^{p_0} (\R_+ \times (0, 1)) \times L^{p_0} (\R_+ \times (0, 1))
%\end{align*}

%\[
%v_t-\Delta v+\nabla \Pi=-{\rm div}\,\left(v\otimes v+v\otimes
%W+v\otimes W+W\otimes w\right),
%\]
%\[
%{\rm div} \, v =0,
%\]
%with homogeneous initial and boundary data, i.e. $v(x,0)=0$ and
%$v(x,t)=0$ on $\{x_n=0\}$.

We then set $u:=v+W$ and $p =\pi + q$, which becomes a  weak solution of the
Navier-Stokes equations in $Q_+$, namely
\[
u_t-\Delta u+\nabla p=-{\rm div}\,\left(u\otimes u\right),\qquad
{\rm div} \, u=0 \quad Q^+_1
\]
with boundary data $u(x,t)=g(x,t)$ on $\{x_n=0\}$  such that
\begin{align*}
\|\na^2  u\|_{L^{\frac{3q}2} (Q_1^+)}  + \|D_t  u\|_{L^{q} (Q_1^+)} + \|\na  p\|_{L^{q}. (Q_1^+)}  < \infty.
\end{align*}
Since integrability of spatial variables can be improved, we can have that for given $p>1$
\[
\norm{\na^2 u}_{L^{\frac{3q}2}_t L^{p}_{x}(Q_{1}^+)}+\norm{ D_t u}_{L^{q}_t L^{p}_{x}(Q_{1}^+)}+\norm{\nabla \pi}_{L^{q}_t L^{p}_{x}(Q_{1}^+)}<\infty,
\]
However, it is straightforward via construction that for any  $r_1>q$ and $r_2>\frac{3q}{2}$
%\begin{align*}
%\|D_t  u\|_{L^{r_1} (Q_{\frac{1}{4}}^+)} = \infty, \quad \|\na p\|_{L^{r_1} (Q_{\frac{1}{4}}^+)} = \infty,\quad \|\na^2 u\|_{L^{r_2} (Q_{\frac{1}{4}}^+)} = \infty.
%\end{align*}
\begin{align*}
 \| D_t u \|_{L^{r_1}_t L^{p}_x (Q_{\frac{1}{2}}^+)} =\infty, \qquad \| \nabla \pi \|_{L^{r_1}_t L^{p}_x (Q_{\frac{1}{2}}^+)} =\infty, \qquad \| \nabla^2 u \|_{L^{r_2}_t L^{p}_x (Q_{\frac{1}{2}}^+)} =\infty.
\end{align*}
This completes the proof.
\qed

\appendix
\setcounter{equation}{0}

\section{Alternative proof in Remark \ref{rem-thm11}}
\label{alter-thm11}

%{\color{red}$\bullet$}\footnote{We need to check Appendix A.}

We provide the alternative and simple proof of Theorem \ref{maintheorem-SS}  for the case $p =2$, since it seems informative. Assume that $g_n^{{\mathcal T}} \in L^\infty({\mathbb R}) \setminus \dot H^{\frac14}_{2} ({\mathbb R})$.
Suppose that $w$ is a solution of the Stokes equations
\eqref{Stokes-bvp-200}-\eqref{Stokes-bvp-210} defined by
\eqref{rep-bvp-stokes-w} when $f=0$  and  the boundary data $g$ is given in \eqref{0502-6} with \eqref{boundarydata}. We remind the decompositon of $w = w^{\calL} + w^{\calN}+ w^{\calB,1}+ w^{\calB,2}$, where $ w^{\calL}$, $w^{\calN}$, $w^{\calB,1}$ and $w^{\calB,2}$ are specified in the proof of Proposition \ref{lemma0406}.
We recalled that it was shown in the proof of Proposition \ref{lemma0406}  that
\begin{align*}
\| w^{\calL} \|_{L^\infty(Q_1^+)} + \|  w^{\calN}\|_{L^\infty(Q_1^+)} + \| w^{\calB,2} \|_{L^\infty(Q_1^+)} < \infty,
\end{align*}
\begin{align*}
 \|\na w^{\calL} \|_{L^r(Q_1^+)} + \| \na  w^{\calN}\|_{L^r(Q_1^+)} + \| \na w^{\calB,2} \|_{L^r(Q_1^+)} <\infty \quad 1 < r < \infty.
\end{align*}
The term $w^{\calB,1 }_i$ is represented by
\begin{align*}
   w^{\calB,1 }_i(x,t)
&=   c_n \int_{0}^t D_{x_n} \Ga_1 (x_n, t-s) g^{{\mathcal T}}_n(s)ds \psi_i(x'),
\end{align*}
where $\Ga_1$ is the one dimensional Gaussian kernel and  $\psi_i(x') = \int_{\Rn} \frac{x_i -y_i}{|x' -y'|^n} g_n^{{\mathcal S}}(y') dy'   $ is smooth in $|x' | \leq 1$.
Using the decay of $\Ga_1$ (see \eqref{0730-3-1}), we can obtain
\begin{align}\label{0730-3}
\notag \int_1^\infty  \int_0^\infty       \abs{\int_{0}^t D^2_{x_n} \Ga_1 (x_n, t-s) g^{{\mathcal T}}_n(s)ds}^2           dx_n dt
& \leq  c\| g_n^{\mathcal T}\|_{L^\infty ({\mathbb R})},\\
\int_{1}^\infty  \int_0^\infty      \abs{\int_{0}^t D^2_{x_n} \Ga_1 (x_n, t-s) g^{{\mathcal T}}_n(s)ds}^2          dt     dx_n & \leq c\| g_n^{\mathcal T}\|_{L^\infty ({\mathbb R})}.
\end{align}
Note that   $D_{x_n}^2 \Ga_1(x_n, t) =  x_n^{-1}K_{x_n^2} (t)$, where $K_{x_n^2} (t)= x_n^{-2}K(\frac{t}{x_n^2})$ with
%{\color{red}$\bullet$}\footnote{constant and computation need to be checked again.}
\[
 K(t) = \frac1{\sqrt{4\pi}}\Big(-\frac{4}{t^{\frac32} } + \frac{16}{t^{\frac52}}\Big)
  e^{-\frac{1}{4t}} \chi_{t > 0} = D_t \bke{\frac1{\sqrt{4\pi}} t^{-\frac12}
  e^{-\frac{1}{4t}} \chi_{t > 0}}.
\]
Using  Plancherel Theorem with respect to $t$ and change of variables,  we have
\begin{align}\label{0730-5}
\notag & \int_{-\infty}^{\infty} \int_0^\infty \abs{\int_0^tD^2_{x_n} \Ga_1 (x_n, t-s) g^{{\mathcal T}}_n(s) ds}^2 dx_n dt\\
\notag &  = \int_{-\infty}^{\infty} \int_0^\infty x_n^{-2} |K_{x_n^2} * g_{n}^{\mathcal T}(t)|^2 dx_n dt\\
\notag = &\int_{-\infty}^{\infty}|\hat  g_n^{\mathcal T} (\tau)|^2  \int_0^\infty  x_n^{-2} |\hat K(x_n^2 \tau)|^2  dx_n d \tau\\
\notag  = &\int_{-\infty}^{\infty}|\hat g_n^{\mathcal T}(\tau)|^2|\tau|^{\frac12}   d \tau \int_0^\infty  x_n^{-\frac32} |\hat K(x_n) |^2  dx_n\\
 = &\| g_n^{\mathcal T}\|_{\dot H^{\frac14}_2 ({\mathbb R})} \int_0^\infty  x_n^{-\frac32} |\hat K(x_n) |^2  dx_n.
\end{align}
Since  $\int_{-\infty}^{\infty} K(t) dt =0$, it follows that $|\hat K(x_n) |\leq c |x_n|$ near zero and since $K\in L^1({\mathbb R})$, we have that  $\hat K(x_n)$ is also bounded.
%\footnote{KK: We need $|\hat K(x_n) |<\infty$, don't we?}
 Hence, $\int_0^\infty  x_n^{-\frac32} |\hat K(x_n) |^2  dx_n$  is well-defined.  Thus, from \eqref{0730-5},  we have
\begin{align}\label{0819-5}
\int_{-\infty}^{\infty} \int_0^\infty \abs{\int_0^tD^2_{x_n} \Ga_1 (x_n, t-s) g^{{\mathcal T}}_n(s) ds}^2 dx_n dt
& = c\| g_{n}^{\mathcal T}\|_{\dot H^{\frac14}_2 ({\mathbb R})}.
\end{align}
Since $ \| g_{n}^{\mathcal T}\|_{\dot H^{\frac14}_2 ({\mathbb R})} =\infty$,
it  follows from  \eqref{0730-3}, \eqref{0730-5} and \eqref{0819-5} that
\begin{align*}
\int_{Q^+_1} |D_{x_n} w_1^{\calB, 1}(x,t)|^2 dx dt
\geq c\| g^{\mathcal T}_{n}\|_{\dot H^{\frac14}_2 ({\mathbb R})} \| \psi\|_{L^\infty(B'_1 )} -c\| g_n\|_{L^\infty(\Rn \times (0, \infty))}=\infty.
\end{align*}
Hence, we complete the proof of Remark \ref{rem-thm11}.
\qed

\section{Proof of Remark \ref{rem0711-2}}
\label{appendix0131-0}

The claim in Remark \ref{rem0711-2} is proved in the next lemma.

\begin{lemm}\label{lemma0706-1}
Let
\begin{align*}
\left\{\begin{array}{ll} \vspace{2mm}
 g_n^{{\mathcal T}}(t) &  = \eta(t) (1-t)^\frac12 \ln^{-1} (1-t ) \quad 0 < t < 1,\\
  g_n^{{\mathcal T}}(t)  & =0 \quad  1 \leq t,
 \end{array}
 \right.
 \end{align*}
where $\eta \in  C_c^\infty( \frac34, 2)$ satisfying $\eta \geq 0$ and $\eta = 1$ in $ \frac38 \leq t \leq \frac32$
and $g_n^{\mathcal S} \in C_c^\infty(\Rn) $ satisfy the conditions of \eqref{1222-2}. Then,
\begin{align}\label{0706-10}
\| p \|_{L^2 ( Q_1^+) } + \| \na p\|_{L^2 (Q_1^+)} + \| \nabla w \|_{L^2 (Q_1^+)}  < \infty
\end{align}
but
\begin{align}\label{0706-11}
\| \na w\|_{L^\infty(Q_1^+)} = \infty.
\end{align}

\end{lemm}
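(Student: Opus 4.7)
The strategy mirrors the proof of Theorem~\ref{maintheorempressure-SS} in the borderline case $q=2$, augmented by a delicate pointwise asymptotic near $(x_n,t)=(0,1)$ to produce the $L^\infty$ blow-up. I would begin by decomposing $w=w^{\mathcal H}+w^{\mathcal S}$ and $p=p^{\mathcal H}+p^{\mathcal S}$ exactly as in Section~\ref{proofss}: set $w^{\mathcal H}=\nabla\phi$ and $p^{\mathcal H}=-D_t\phi$ with $\phi(x,t)=c_n\int_{\mathbb R^{n-1}}|x-y'|^{-(n-2)}g_n(y',t)\,dy'$, and let $(w^{\mathcal S},p^{\mathcal S})$ solve the Stokes system with the residual boundary data $G=(-R'_1g_n,\dots,-R'_{n-1}g_n,0)$.

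For the three positive estimates in \eqref{0706-10}, the key analytic fact is that $D_tg_n^{\mathcal T}\in L^2(0,1)$. Near $t=1$ the dominant term is $-\tfrac12(1-t)^{-1/2}\ln^{-1}(1-t)$, and the substitution $v=\ln(1-t)$ reduces $\int_{3/4}^1(1-t)^{-1}|\ln(1-t)|^{-2}\,dt$ to $\int_{-\infty}^{\ln(1/4)}v^{-2}\,dv<\infty$. Since $g_n^{\mathcal S}$ is supported in $B'_2\setminus B'_{3/2}$ while $x'\in B'_1$ keeps $|x-y'|\ge 1/2$, every derivative of the kernel $|x-y'|^{-(n-2)}$ is uniformly bounded on the integration region, so
\[
\|p^{\mathcal H}\|_{L^2(Q_1^+)}+\|\nabla p^{\mathcal H}\|_{L^2(Q_1^+)}+\|\nabla w^{\mathcal H}\|_{L^2(Q_1^+)}\le C\bigl(\|D_tg_n^{\mathcal T}\|_{L^2(0,1)}+\|g_n^{\mathcal T}\|_{L^\infty}\bigr)<\infty.
\]
For the $\mathcal S$-part, $g_n^{\mathcal T}\in H^1(\mathbb R)\hookrightarrow\dot B^{1/4}_{22}(\mathbb R)$, so Proposition~\ref{prop2} gives $g_n\in\dot B^{1/2,1/4}_{22}(\mathbb R^{n-1}\times\mathbb R)$, and the Solonnikov-type estimates for the Stokes half-space problem with boundary data (cf.\ \cite{LS}) yield $\nabla w^{\mathcal S},p^{\mathcal S},\nabla p^{\mathcal S}\in L^2(Q_1^+)$.

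For the blow-up \eqref{0706-11}, I would reuse the decomposition of $D_{x_n}w_1^{\mathcal B}$ from the proof of Corollary~\ref{lemm1222-1}: every piece except
\[
I_{11}(x,t)=c_n\,\psi(x')\int_0^t\Gamma_1(x_n,t-s)\,D_sg_n^{\mathcal T}(s)\,ds,\qquad \psi(x')=\int_{\mathbb R^{n-1}}\frac{x_1-y_1}{|x'-y'|^n}g_n^{\mathcal S}(y')\,dy',
\]
is manifestly $L^\infty$-bounded on $Q_1^+$. Choose $g_n^{\mathcal S}$ non-symmetric (still inside $B'_2\setminus B'_{3/2}$) so that $\psi(x_*')\neq0$ for some $x_*'\in B'_1$; this is permitted by \eqref{1222-2}. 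Differentiation shows $D_sg_n^{\mathcal T}(s)>0$ for $s$ close to $1$ with $D_sg_n^{\mathcal T}(s)\ge c(1-s)^{-1/2}|\ln(1-s)|^{-1}$. Substituting $u=t-s$ and restricting to $u\in[\max(x_n^2,1-t),1/8]$, where $e^{-x_n^2/(4u)}\ge e^{-1/4}$ and $(1-t+u)\le 2u$, one obtains
\[
\int_0^t\Gamma_1(x_n,t-s)\,D_sg_n^{\mathcal T}(s)\,ds \;\gtrsim\; \int_{\max(x_n^2,1-t)}^{1/8}\frac{du}{u\,|\ln u|} \;\gtrsim\; \ln\!\bigl|\ln\max(x_n^2,1-t)\bigr|,
\]
using the antiderivative $\ln|\ln u|$ of $(u|\ln u|)^{-1}$. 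Taking $t_k\to 1^-$ and $x_n^k=\sqrt{1-t_k}$ forces the right-hand side to $\infty$, hence $|I_{11}(x_*',x_n^k;t_k)|\to\infty$ along this sequence and \eqref{0706-11} follows.

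The main obstacle is the last asymptotic estimate: the divergence is only double-logarithmic, reflecting the borderline nature of the data. This is precisely why the example escapes $L^\infty$ yet remains controlled in every $L^p$, $p<\infty$, through Corollary~\ref{lemm1222-1}; any coarser lower bound would either lose the sharpness of the positive estimates or miss the blow-up altogether.
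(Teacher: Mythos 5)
Your proposal follows essentially the same route as the paper: split $w$ and $p$ into the harmonic/boundary-potential pieces of Section~\ref{proofss}, read off the $L^2$ bounds from $D_t g_n^{\mathcal T}\in L^2(0,1)$ via $v=\ln(1-t)$, and then isolate the scalar expression $I_{11}(x,t)=c_n\,\psi(x')\int_0^t\Gamma_1(x_n,t-s)D_sg_n^{\mathcal T}(s)\,ds$ (equivalently, the paper's $D_{x_n}w_1^{\mathcal B,1}=c_n\psi(x')\int_0^t D_{x_n}^2\Gamma_1(x_n,t-s)g_n^{\mathcal T}(s)\,ds$, related by integration by parts in $s$) as the sole source of blow-up.

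Where the two differ is in the final lower bound, and in fact your version is the more careful one. The paper evaluates at $t=1$, substitutes $s=x_n^2/(4u)$, and claims
\[
\int_0^{1/4}\frac{e^{-x_n^2/(4u)}}{u\,|\ln u|}\,du \;\gtrsim\; \int_{4x_n^2}^1 s^{-1}\,ds \;=\; -\ln(4x_n^2)-c,
\]
which would require the weight $e^{-s}\,|\ln(x_n^2/s)|^{-1}$ to be bounded below by a constant on $(4x_n^2,1)$; near $s=1$ this weight decays like $|\ln x_n^2|^{-1}$, so the stated bound is stronger than what the argument delivers. Your use of the exact antiderivative $\tfrac{d}{du}\ln|\ln u|=(u\ln u)^{-1}$ gives the correct (double-logarithmic) rate
\[
\int_{\max(x_n^2,\,1-t)}^{1/8}\frac{du}{u\,|\ln u|}\;\gtrsim\;\ln\bigl|\ln\max(x_n^2,1-t)\bigr|,
\]
which still diverges as $(x_n,t)\to(0,1)$ along $x_n=\sqrt{1-t}$, so \eqref{0706-11} follows. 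Thus the conclusion is the same in both treatments, but your argument also exhibits accurately why this example is borderline: the blow-up is only $\ln\ln$, matching the finiteness of every $L^p$ norm with $p<\infty$.

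One small caveat worth adding: when restricting the $u$-integral to an interval on which $\eta\equiv1$ so that $g_n^{\mathcal T}(s)=(1-s)^{1/2}\ln^{-1}(1-s)$ exactly, the upper limit should be taken as $t-a$ where $[a,b]\ni1$ is the interval on which $\eta\equiv1$; for $t$ close to $1$ this is comparable to the constant $1/8$ you used, so it does not affect the divergence, but it should be stated.
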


\begin{proof}
%{\color{red}$\bullet$}\footnote{Proof is rewritten.}
Note that $ g_n^{{\mathcal T}}$ satisfies the condition in \eqref{1222-2} for $q =2$ without the last one.
From the proof of Theorem \ref{maintheorempressure-SS}, we have
\begin{align*}
\| p \|_{L^2 ( Q_1^+) } + \| \na p\|_{L^2 (Q_1^+)} + \| u \|_{L^2 (Q_1^+)} \leq c \big(\|   g_n^{{\mathcal T}}\|_{L^2(0, 1)} +  \| D_t g_n^{{\mathcal T}}\|_{L^2 (0, 1)} \big) < \infty.
\end{align*}
Hence, \eqref{0706-10} holds.
To prove \eqref{0706-11}, we the decomposition  $w = w^{{\mathcal L}} + w^{{\mathcal B}} + w^{{\mathcal N}}$ defined in Proposition \ref{lemma0406}. In the proof of Proposition \ref{lemma0406},  we have
\begin{align*}
\| \na( w - w^{{\mathcal B},1} )\| _{L^\infty (Q^+_1 )} \leq \| g\|_{L^\infty}.
\end{align*}
Note that  for $\frac34 < t$,
\begin{align*}
 D^2_{x_n}  \Ga_1 * g_n^{\mathcal T} (x_n,t)& = c_n\int_\frac34^t \frac1{(t-s)^\frac12} e^{-\frac{x_n^2}{4(t-s)}} D_s g_n^{{\mathcal T}}(s) ds\\
 & = - c_n\int_\frac34^t \frac1{(t-s)^\frac12} e^{-\frac{x_n^2}{4(t-s)}}\frac{1}{2} (1-s)^{-\frac12} \ln^{-1} (1-s)\bke{1-2\ln^{-1} (1-s)} ds.
\end{align*}
Since $(1-2\ln^{-1} (1-s))\geq  - \ln^{-1} (1-s), \,\, \frac34 < s <1$,    we have
\begin{align*}
| D^2_{x_n}  \Ga_1 * g_n^{\mathcal T} (x_n,1)|& \geq
 c\int_\frac34^1 \frac1{(1-s)^\frac12} e^{-\frac{x_n^2}{4(1-s)}} (1-s )^{-\frac12} |\ln (1-s )|^{-1} ds\\
 & =
 c\int_0^\frac14  s^{-1} e^{-\frac{x_n^2}{4s}}   |\ln s|^{-1} ds\\
 & =
 c\int_{4x_n^2}^\infty s^{-1} e^{-s}   |\ln \frac{x_n^2}{s}|^{-1} ds\\
 & \geq
 c\int_{4x_n^2}^1 s^{-1}   ds\\
 & = -c \ln (4 x_n^2) -c.
\end{align*}
Summing up the above estimates, we have
$\| \nabla w^{{\mathcal B},1} \| _{L^\infty (B_1 \times (0, 1))} =\infty$, depending on the sign of\\
 $\int_{\Rn}    g^{\mathcal S}_n(y')      \frac{x_1 -y_1  }{|x' -y' |^{n-1}}
 dy' ds$, unless it vanishes. It is not difficult to choose $g^{\mathcal S}_n$ such that the integral is not zero.
Thus, we complete the proof of Lemma \ref{lemma0706-1}.
\end{proof}

%\section{proof of Proposition  \ref{theoexternel-boundary-2} }
%\label{appendix0909}

%As defined in \eqref{0909-1}, the solution of \eqref{Stokes-bvp-200}-\eqref{Stokes-bvp-210}  with $g =0$ is represented by
%\begin{align*}
%w(x,t) = \int_0^t \int_{\R_+} K^{\mathcal P}(x,y, t-s) {\mathbb P} f(y,s) dyds.
%\end{align*}
%From \eqref{0801-1} and Holder inequality, for $p > \frac{n+2}2$, we have
%\begin{align*}
%|w(x,t)| &  \leq c\int_0^t (t -s)^{-\frac{n}{2p}} \|  f(s)\|_{L^p (\R_+)} ds\\
%&  \leq c \big( \int_0^t (t -s)^{-\frac{n}{2p}\frac{p}{p-1}}  ds \big)^{\frac{p-1}p} \|  f\|_{L^p (\R_+ \times (0, t))}\\
%&  \leq c t^{1-\frac{n+2}{2p}} \|  f\|_{L^p (\R_+ \times (0, t))}.
%\end{align*}
%Hence, we complete the proof of Proposition  \ref{theoexternel-boundary-2}.
%{\color{red}$\bullet$} \footnote{It seems mismatch for the exponent.}

\section{Examples in  Remark \ref{rem0314} and Remark \ref{example-0924}}
\label{appendix0131}

$\bullet$ ($ g^{\mathcal T}_n \in L^\infty({\mathbb R}) \setminus \dot B^{\frac12 -\frac1{2p}}_{pp} ({\mathbb R})$, $1<p<\infty$ in Remark \ref{rem0314})

Let $ 0< a < 1 $ be a number satisfying $3 -\frac2{1 +a} < p$. Define $g_n^{\mathcal T}(t)$  by
\begin{align*}
g_n^{\mathcal T}(t) = \sum_{k =1}^\infty \chi_{( (2k +1)^{-a}, (2k)^{-a})  } (t),
\end{align*}
where $\chi$ is a characteristic function. It is direct that $g_n^{\mathcal T} \in L^\infty (0,1)$.
We claim that $ g_n^{\mathcal T} \notin \dot B^{\frac12 -\frac1{2p}}_{pp} (0, 1)$. Indeed,
\begin{align}\label{0928-1}
\notag \int_0^1 \int_0^1 \frac{|g_n^{\mathcal T}(t) - g_n^{\mathcal T}(s)|^p}{|t -s|^{1 + p (\frac12 -\frac1{2p})}} dsdt &\geq \sum_{ k \geq 1}  \int_{ (2k +1)^{-a} }^{(2k )^{-a}  } \int_{(2k )^{-a}  }^1 \frac{|1 - g_n^{\mathcal T}(s)|^p}{|t -s|^{1 + p (\frac12 -\frac1{2p})}} dsdt\\
& \geq  \sum_{ k \geq 1} \sum_{1 \leq l \leq k } \int_{ (2k +1)^{-a} }^{(2k )^{-a}  } \int_{(2l)^{-a}}^{(2l -1)^{-a}} \frac{1}{|t -s|^{\frac12 +\frac{p}{2}}} dsdt.
\end{align}
For $ t \in  ( (2k +1)^{-a},(2k )^{-a})$ and $l < k$, there is $c_1 > 0$  independent of $k, \, l$ such that
%{\color{red}$\bullet$}\footnote{$c_a=1$?}
\begin{align*}
\int_{(2l)^{-a}}^{(2l -1)^{-a}} \frac{1}{|t -s|^{\frac12 +\frac{p}{2}}} ds
\geq  \int_{(2l-1)^{-a}}^{(2l -2)^{-a}} \frac{1}{|t -s|^{\frac12 +\frac{p}{2}}} ds.
\end{align*}
This implies that
%{\color{red}$\bullet$}\footnote{$c_a=1/2$?}
\begin{align*}
\sum_{1 \leq l \leq k } \int_{(2l)^{-a}}^{(2l -1)^{-a}} \frac{1}{|t -s|^{\frac12 +\frac{p}{2}}} ds
& \geq \frac{1}{2}  \int_{(2k )^{-a}  }^1 \frac{1}{|t -s|^{\frac12 +\frac{p}{2}}} dsdt\\
&  =c  \bke{ \frac{1}{( (2k)^{-a} -t)^{-\frac12 +\frac{p}{2}}} - \frac{1}{(1-t)^{-\frac12 +\frac{p}{2}}} }\\
&  \geq c   \frac{1}{( (2k)^{-a} -t)^{-\frac12 +\frac{p}{2}}}.
\end{align*}

\begin{align}\label{0928-2}
\notag \sum_{1 \leq l \leq k } \int_{ (2k +1)^{-a} }^{(2k )^{-a}  } \int_{(2l)^{-a}}^{(2l -1)^{-a}} \frac{1}{|t -s|^{\frac12 +\frac{p}{2}}} dsdt
&  \geq c\int_{ (2k +1)^{-a} }^{(2k )^{-a}  }   \frac{1}{( (2k)^{-a} -t)^{-\frac12 +\frac{p}{2}}} dt\\
&   = \left\{ \begin{array}{l}
\infty \quad \mbox{if } \quad p \geq 3,\\
c_4  ((2k)^{-a} - (2k +1)^{-a} )^{\frac32 -\frac{p}{2}} \quad \mbox{if } \quad 1 < p < 3.
\end{array}
\right.
\end{align}
%{\color{red}$\bullet$}\footnote{second inequality is opposite, isn't it?}
Hence, $g_n^2 \notin \dot B^{\frac12 -\frac1{2p}}_{pp} (0, 1)$ for $ p \geq 3$ and $a > 0$.
Using mean-value Theorem, there is $\xi \in (2k, 2k +1)$,  we have
\begin{align}\label{0928-3}
(2k)^{-a} - (2k +1)^{-a} = \frac{(2k +1)^a - (2k)^a}{(2k)^{a} (2k +1)^{a}  }
= \frac{a \xi^{a-1} }{(2k)^{a} (2k +1)^{a}  } \geq c k^{-1 -a}.
\end{align}
By \eqref{0928-1}, \eqref{0928-2} and \eqref{0928-3}, for $ 1 < p < 3$, we have
\begin{align}\label{0314-1}
\notag \int_0^1 \int_0^1 \frac{|g_n^{\mathcal T}(t) - g_n^{\mathcal T}(s)|^p}{|t -s|^{1 + p (\frac12 -\frac1{2p})}} dsdt
&\geq    c_4 \sum_{ k \geq 1}    ((2k)^{-a} - (2k +1)^{-a} )^{\frac32 -\frac{p}{2}}\\
& \geq    c_5 \sum_{ k \geq 1}   k^{-(1 +a)(\frac32 -\frac{p}2) }.
\end{align}
Since $-(1 +a)(\frac32 -\frac{p}2)  > -1$, the right-hand side of \eqref{0314-1} is infinite. Therefore, $g_n^{\mathcal T} \notin \dot B^{\frac12 -\frac1{2p}}_{pp} (0, 1)$.
\qed
\\
\\
%\section{Example of Remark \ref{example-0924}}
%\label{appendix0917}
$\bullet$ ($  D_t g^{\mathcal T}_n \in L^q( 0,1) \setminus  L^r(0,1)$ for $q < r$ and $  g^{\mathcal T}_n \notin B^{1 -\frac1{2r}}_{rr} (0,1) $ for  $r>\frac{3q}2$ in Remark \ref{example-0924})

Direct calculations show %that $ D_t g_n^{\mathcal T} (t) = 0$ for $0 < t \leq  \frac34 $ and $D_t g_n^{\mathcal T} (t) =      (1 -\frac1q ) (t -\frac3{4})^{ -\frac1q} -      (t -\frac3{4})^{ -\frac1q} \ln (t-\frac3{4})^{-2}$ for $ \frac3{4} < t < 1$.
%Hence, we have
\begin{align*}
\| D_t g_n^{\mathcal T}\|^q_{L^q (0, 1)} & \leq c \int_\frac34^1 (t-\frac34)^{-1}|\ln (t-\frac34)|^{-q}dt < \infty,\\
\| D_t g_n^{\mathcal T}\|^r_{L^q (0, 1)} & \geq c \int_\frac34^1 (t-\frac34)^{-\frac{r}q}|\ln (t-\frac34)|^{-r}dt  = \infty \quad r > q.
\end{align*}
Reminding the definition of Beosv space, we have
\begin{align*}
\| g_n^{\mathcal T}\|^r_{\dot B^{1 -\frac1{2r}}_{rr} (0,1)} & = \int_0^1 \int_0^1 \frac{| g_n^{\mathcal T} (t) -  g_n^{\mathcal T} (s)|^r }{ |t -s|^{1 + r ( 1 -\frac1{2r})}} dsdt \\
& \geq \int_{\frac34}^1  \int_{\frac34}^t  \frac{| g_n^{\mathcal T} (t) -  g_n^{\mathcal T} (s)|^r}{|t -s|^{1 + r ( 1 -\frac1{2r})}  }   dsdt.
\end{align*}
Using mean-value Theorem, for $\frac34 < t <1 $ and $\frac34 < s < t$,  there is $\xi \in (s, t)$ such that $|g_n^{\mathcal T} (t) -  g_n^{\mathcal T} (s)| = |D_t g_n^{\mathcal T} (\xi)|  |t-s| \geq c (t-\frac34)^{-\frac1q} |\ln (t -\frac34)|^{-1} |t-s|$. Hence, we have
\begin{align*}
\| g_n^{\mathcal T}\|^r_{\dot B^{1 -\frac1{2r}}_{rr} (0,1)}  & \geq c\int_{\frac34}^1  (t -\frac34)^{ -\frac{r}q } |\ln (t-\frac34)|^{-r} \int_{\frac34}^t  |t-s|^{-\frac12}   dsdt\\
& \geq c\int_{\frac34}^1  (t-\frac34)^{ -\frac{r}q  +\frac12} |\ln (t-\frac34)|^{-r} dt\\
& = \infty \quad \mbox{for} \quad  r>\frac{3q}2.
\end{align*}
Therefore, this completes to provides an example mentioned Remark 6.1.
\qed

\section*{Acknowledgements}
T. Chang is partially supported by NRF-2020R1A2C1A01102531.  K. Kang is partially
supported by  NRF-2019R1A2C1084685. We are grateful to Chan Hong Min for his careful reading and useful comments.

%\begin{equation*}
%\left.
%\begin{array}{l}
%{ \mbox{Tongkeun Chang}}: \,{\mbox{chang7357@yonsei.ac.kr}}\\
%{\mbox{Kyungkeun Kang}}: \,{\mbox{kkang@yonsei.ac.kr }}
%\end{array}
%\right.
%\end{equation*}

\end{document}